\newtheorem{theorem}{Theorem}
\newtheorem{lemma}[theorem]{Lemma}
 \newtheorem{prop}[theorem]{Proposition}
 \newtheorem{corollary}[theorem]{Corollary}
\newcommand{\bc}{\begin{center}}
\newcommand{\ec}{\end{center}}
\newcommand{\beqn}{\begin{align*}}
\newcommand{\eeqn}{\end{align*}}
\newcommand{\benu}{\begin{enumerate}}
\newcommand{\eenu}{\end{enumerate}}
\newcommand{\bit}{\begin{itemize}}
\newcommand{\eit}{\end{itemize}}
\newcommand{\cS }{\mathcal{ S }}
\newcommand{\pdd}[3]{\ifx#2#3\frac{\partial^2 #1}{\partial #2^2}\else \frac{\partial^2 #1}{\partial #2\,\partial #3}\fi}
\newcommand{\ave}[1]{\left\langle #1 \right\rangle}
\newcommand{\ti}{\;\;\makebox[0pt]{$\top$}\makebox[0pt]{$\pitchfork$}\;\;}
 \author{Robert Kesler}    
\address{1217 21st Street, Santa Monica CA 90404, USA}
\email {robertmkesler@gmail.com}
\begin{document}

 \title[Improving Properties and Sparse Bounds for Discrete Spherical Means]{$\ell^p(\mathbb{Z}^d)$-Improving Properties and 
 Sparse Bounds for Discrete Spherical Maximal Averages}

\begin{abstract} We exhibit a range of $\ell ^{p}(\mathbb{Z}^d)$-improving properties for the discrete spherical maximal average in every dimension $d\geq 5$. The strategy used to show these improving properties is then adapted to establish sparse bounds, which extend the discrete maximal theorem of Magyar, Stein, and Wainger to weighted spaces. In particular, the sparse bounds imply that the discrete spherical maximal average is a bounded map from $\ell^2(w)$ into $\ell^2(w)$ provided $w^{\frac{d}{d-4}+\delta}$ belongs to the Muckenhoupt class $A_2$ for some $\delta>0.$
 \end{abstract}
  \maketitle
  \tableofcontents
 \section{Introduction}
Let $\textbf{A}^d_\lambda$ denote the continuous spherical averaging operator on $\mathbb{R}^d$ at radius $\lambda$, i.e.
\begin{align*}
\textbf{A}^d_\lambda f (x) = \int_{S^{d-1}} f(x-\lambda y) d \sigma (y),
\end{align*}
where $d \geq 2$, $S^{d-1}$ denotes the unit $d-1$ dimensional sphere in $\mathbb{R}^d$ and $\sigma$ is the unit surface measure on $S^{d-1}$. Stein establishes the spherical maximal theorem for $d \geq 3$ in \cite{Stein}, which states that $|| \sup_{\lambda} |\textbf{A}^d_\lambda |: L^p(\mathbb{R}^d) \rightarrow L^p(\mathbb{R}^d)|| <\infty$ for all $ \frac{d}{d-1} < p \leq \infty $. Bourgain examines the $d=2$ case in \cite{Bourgain} and shows $|| \sup_\lambda |\textbf{A}^2_\lambda| :L^p(\mathbb{R}^2) \rightarrow L^p(\mathbb{R}^2) || <\infty$ for all $2 < p \leq \infty$. 
The sharp $L^p(\mathbb{R}^d)$-$L^q(\mathbb{R}^d)$ result for $\sup_{1 \leq \lambda <2} |\mathbf{A}_\lambda^d|$ shown by Schlag in \cite{Schlag1} is as follows.

\begin{theorem}\label{Thm:0}
Let $d \geq 2$. Define $\mathcal{T}(d)$ to be the interior convex hull of $\{T_{d,j}\}_{j=1}^4$, where  
\begin{eqnarray*}
&T_{d,1} = (0,1) \qquad 
&T_{d,2} = \left(\frac{d-1}{d}, \frac{1}{d} \right) \\ 
&T_{d,3} = \left(\frac{d-1}{d}, \frac{d-1}{d} \right) \qquad  
 &T_{d,4} = \left( \frac{ d^2 - d}{d^2+1} ,  \frac{d^2 -d+2}{d^2+1} \right).
\end{eqnarray*}
Then for all $(\frac{1}{p}, \frac{1}{r}) \in \mathcal{T}(d)$ there exists a constant $A=A(d,p,r)$ such that
\begin{align*}
\left| \left| \sup_{1 \leq \lambda <2} | \textbf{A}^d_\lambda | \right| \right|_{L^p \to L^{r^\prime}}  \leq A.
\end{align*}
By rescaling, we obtain for all $(\frac{1}{p}, \frac{1}{r}) \in \mathcal{T}(d)$ and $\Lambda \in 2^{\mathbb{Z}}$, 
\begin{align*}
\left| \left| \sup_{\Lambda \leq \lambda < 2 \Lambda} |\textbf{A}^d_\lambda  | \right| \right|_{L^p \to L^{r^\prime}} \leq A \Lambda^{d(1/r^\prime - 1/p)}. 
\end{align*}
\end{theorem}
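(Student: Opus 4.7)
The plan is to reduce, by Riesz--Thorin/Marcinkiewicz interpolation applied to the linearized sublinear operator (obtained by measurably selecting $\lambda(x)\in[1,2)$), to proving bounds at each of the four extremal vertices $T_{d,j}$ of $\mathcal{T}(d)$. The vertex $T_{d,1}=(0,1)$ gives the trivial $L^\infty\to L^\infty$ bound, since each $\mathbf{A}^d_\lambda$ is convolution with a probability measure on $\lambda S^{d-1}$.

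For the three non-trivial vertices I would decompose $d\sigma$ into dyadic frequency pieces, using the standard stationary-phase expansion of $\widehat{d\sigma}$ and its decay $|\widehat{d\sigma}(\xi)|\lesssim (1+|\xi|)^{-(d-1)/2}$. The vertex $T_{d,2}=((d-1)/d,1/d)$, which is the diagonal endpoint $L^{d/(d-1)}\to L^{d/(d-1)}$, follows from a Littlewood--Paley/$g$-function argument in the style of Stein's spherical maximal theorem, exploiting that $\lambda$ is restricted to a single dyadic annulus (so there is no obstruction to reaching the endpoint exponent). The vertex $T_{d,3}=((d-1)/d,(d-1)/d)$, which by duality amounts to an $L^{d/(d-1)}\to L^d$ estimate, follows by applying the Stein--Tomas restriction estimate to each dyadic frequency piece of $d\sigma$ and summing the geometric series in scales.

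The main obstacle is the critical vertex $T_{d,4}$. This estimate is strictly stronger than anything the Fourier decay of $d\sigma$ provides and corresponds to the endpoint of a local smoothing inequality for $\widehat{d\sigma_\lambda}$ with $\lambda$ varying. To reach it I would perform a wave-packet decomposition of the Fourier multiplier $\widehat{d\sigma_\lambda}$ on frequency annuli at scale $2^k$, linearize the $\sup$ by a measurable selection of $\lambda(x)$, and bound the resulting sum via a geometric/combinatorial estimate on how many $\delta$-thickened spherical shells can nearly-tangentially meet a fixed $\delta$-cube: Schlag's tangential circle incidence argument when $d=2$, and its higher-dimensional Kakeya/Nikodym-type analogues for tubes tangent to spheres when $d\geq 3$. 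This is by far the deepest step in the proof, and the optimality of $T_{d,4}$ is checked against the Knapp-type example provided by a rescaled spherical cap.

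Once the vertex estimates are in place, Riesz--Thorin/Marcinkiewicz interpolation fills in the open quadrilateral $\mathcal{T}(d)$. The rescaling assertion then follows immediately from the parabolic identity $\mathbf{A}^d_{\Lambda\mu}f(\Lambda x)=\mathbf{A}^d_\mu(f(\Lambda\,\cdot))(x)$: applying the $\Lambda=1$ estimate to $f(\Lambda\,\cdot)$ and tracking the Jacobians on the $L^p$ and $L^{r'}$ sides produces the claimed factor $\Lambda^{d(1/r'-1/p)}$.
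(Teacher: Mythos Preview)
The paper does not prove this theorem. Theorem~\ref{Thm:0} is stated as background: it is the sharp $L^p\to L^q$ result for the continuous lacunary spherical maximal function due to Schlag, and the paper simply cites \cite{Schlag1} for it. There is therefore no proof in the paper to compare your proposal against; the paper's own contributions begin with the discrete analogues (Theorems~\ref{Thm:1} and~\ref{Thm:2}).

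That said, your sketch is a reasonable outline of how Schlag's theorem is actually established in the literature. The reductions at $T_{d,1}$, $T_{d,2}$, $T_{d,3}$ are standard (trivial $L^\infty$ bound, Stein-type square function argument, Stein--Tomas restriction respectively), and you have correctly identified $T_{d,4}$ as the deep vertex requiring the local-smoothing/tangential-incidence machinery that is the heart of Schlag's paper. One small caution: at the boundary vertices $T_{d,2}$ and $T_{d,3}$ the strong-type estimate may fail, so one really only proves restricted weak-type bounds near those points and then interpolates into the open interior; your write-up should make that explicit rather than claiming the endpoint strong bounds outright. The rescaling assertion at the end is handled exactly as you describe.
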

Lacey obtains a sparse extension of the continuous spherical maximal theorem in \cite{Lacey}. To state his result properly, we first need to set some notation for sparse bounds. Recall that a collection of cubes $\cS$ in $\mathbb{R}^d$ is called $\rho$-sparse if for each $Q \in \cS$, there is a subset
$E_Q \subset Q$ such that (a) $|E_Q| > \rho |Q|$, and (b) $\|  \sum_{Q \in \cS} 1_{E_Q}  \|_{L^\infty(\mathbb{R}^d)}
\leq \rho^{-1}$.  For a sparse collection $\cS$, a sparse bilinear $(p,r)$-form $\Lambda$ is defined by
\begin{align*}
\Lambda_{\cS,p,r}(f,g) : = \sum_{Q \in \cS} \ave{f}_{Q,p} \ave{g}_{Q,r} |Q|
\end{align*}
where $\ave{h}_{Q,t}:= \left(\frac{1}{|Q|} \sum_{x \in Q} |f(x)|^t \right)^{1/t}$ for any $t:1 \leq t < \infty$, cube $Q \subset \mathbb{Z}^d$, and $h: \mathbb{Z}^d \to \mathbb{C}$. 
Each $\rho$-sparse collection $\cS$ can be split into $O(\rho^{-2})$ many $\frac{1}{2}$-sparse collections. As long as $\rho^{-1}=O(1)$, its exact value is not relevant.
To simplify some of the arguments, we use the following definition introduced in \cite{Culiuc}:  for an operator $T$
acting on measurable, bounded, and compactly supported functions $f:\mathbb{R}^n \rightarrow \mathbb{C}$ and $1 \leq p, r < \infty$, define its sparse norm $\| T: (p,r) \|$ to be the infimum over all $C > 0$ such that for all pairs of measurable, bounded and compactly supported functions $f,g: \mathbb{R}^n \rightarrow \mathbb{C}$ 
\begin{align*}
|\langle Tf, g \rangle | \leq C \sup_\cS \Lambda_{\cS, p,r} (f,g) 
\end{align*}
where the supremum is taken over all $\frac{1}{2}$-sparse forms. A collection $\mathcal{C}$ of ``cubes" in $\mathbb{Z}^d$ is $\rho$-sparse provided there is a collection $\mathcal{S}$ of $\rho$-sparse cubes in $\mathbb{R}^d$ with the property that $\{R \cap \mathbb{Z}^d : R \in \mathcal{S}\}=\mathcal{C}.$ For a discrete operator $T$, define the sparse norm $||T: (p,r)||$ to be the infimum over all $C>0$ such that for all pairs of bounded and finitely supported functions $f, g : \mathbb{Z}^d \rightarrow \mathbb{C}$ 
\begin{align*}
| \langle T f, g \rangle| \leq C \sup_{\mathcal{S}} \Lambda_{\mathcal{S}, p,r} (f,g)
\end{align*}
where the supremum is taken over all $\frac{1}{2}$-sparse collections $\mathcal{S}$ consisting of discrete ``cubes." 
The sparse bounds obtained for continuous spherical maximal averages by Lacey in \cite{Lacey} are given by

\begin{theorem}\label{Thm:Lacey}
Let $d \geq 2$ and $\mathcal{R}_T(d)$ be as in Theorem \ref{Thm:0}. Then for all $(\frac{1}{p}, \frac{1}{r}) \in \mathcal{R}_T(d)$
\begin{align*}
\left| \left| \sup_{\lambda >0} |\textbf{A}^d_\lambda |~ : (p, r) \right| \right| <\infty. 
\end{align*}

\end{theorem}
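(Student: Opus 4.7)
The plan is to combine the single-scale $L^p \to L^{r'}$ improving estimate from Theorem \ref{Thm:0} with a Calder\'on-Zygmund stopping-time construction of the sparse collection, in the spirit of Lacey's sparse domination arguments.

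First I would linearize the supremum by choosing a measurable scale $\lambda : \mathbb{R}^d \to (0, \infty)$ so that $M f(x) := \sup_{\lambda > 0} |\textbf{A}^d_\lambda f(x)| \leq 2 |\textbf{A}^d_{\lambda(x)} f(x)|$ pointwise, and then decompose dyadically $M \leq 2 \sum_{\Lambda \in 2^{\mathbb{Z}}} M_\Lambda$ with $M_\Lambda f(x) := \sup_{\Lambda \leq \lambda < 2 \Lambda} |\textbf{A}^d_\lambda f(x)|$. Theorem \ref{Thm:0} gives $\|M_\Lambda\|_{L^p \to L^{r'}} \leq A \Lambda^{d(1/r' - 1/p)}$, and because $(1/p, 1/r)$ is chosen strictly inside $\mathcal{R}_T(d)$, the exponent $d(1/r' - 1/p)$ is strictly negative, providing the geometric decay needed for summation over $\Lambda$.

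The core of the argument is a single-cube estimate: for a cube $Q_0$ and $f, g$ with support in a bounded dilate of $Q_0$,
\begin{align*}
|\langle M f, g \rangle| \lesssim |Q_0| \, \ave{f}_{Q_0, p} \, \ave{g}_{Q_0, r} + \sum_{Q \in \mathcal{F}} |\langle M(f 1_Q), g 1_Q \rangle|,
\end{align*}
where $\mathcal{F}$ is built by selecting maximal dyadic subcubes $Q \subsetneq Q_0$ on which $\ave{f}_{Q, p} > K \ave{f}_{Q_0, p}$ or $\ave{g}_{Q, r} > K \ave{g}_{Q_0, r}$. Weak-type bounds for the dyadic maximal function give $\sum_{Q \in \mathcal{F}} |Q| \leq \tfrac{1}{2} |Q_0|$ for large enough $K$, which yields $\tfrac{1}{2}$-sparseness upon iteration. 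To prove the single-cube estimate, one partitions the good set $G := Q_0 \setminus \bigcup_{Q \in \mathcal{F}} Q$ by the dyadic value of $\lambda(\cdot)$ and applies H\"older together with the single-scale bound $\|M_\Lambda\|_{L^p \to L^{r'}} \lesssim \Lambda^{d(1/r' - 1/p)}$; on $G$, all averages of $f$ and $g$ at scales up to $\ell(Q_0)$ are controlled by $\ave{f}_{Q_0, p}$ and $\ave{g}_{Q_0, r}$ respectively, which is precisely what is needed for the single-scale bound to close the geometric series in $\Lambda$.

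The principal obstacle will be the non-localization of spherical averages: for $x \in G$ and $\lambda(x) \sim \Lambda$, the sphere of radius $\Lambda$ around $x$ may cross stopping cubes, so $\textbf{A}^d_\Lambda f(x)$ is not determined by $f 1_G$ alone. This is resolved by decomposing $f = f 1_G + \sum_{Q \in \mathcal{F}} f 1_Q$, estimating the $f 1_G$ contribution by the good-set argument above, and absorbing each $f 1_Q$ term into the corresponding iteration. The non-trivial point is that the auxiliary operator $f \mapsto \sup_\lambda |\textbf{A}^d_\lambda (f 1_Q)|$ must be estimated on the portion of $G$ within distance $\sim \Lambda$ of $Q$, which requires a careful balance between the $L^\infty$ trivial bound and the improving bound at the threshold scales $\Lambda \sim \ell(Q)$. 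Iterating the completed single-cube estimate over all generations produces the full $\tfrac{1}{2}$-sparse bound and yields $\|\sup_\lambda |\textbf{A}^d_\lambda| : (p, r)\| < \infty$ as claimed.
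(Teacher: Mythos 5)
Theorem \ref{Thm:Lacey} is stated in this paper purely as a citation of Lacey's work \cite{Lacey}; no proof is given here. The paper does summarize the argument in \cite{Lacey}, listing three ingredients: the improving estimates of Theorem \ref{Thm:0}, a \emph{continuity property obtained by interpolating against a favorable $L^2 \to L^2$ estimate}, and a carefully applied Calder\'on--Zygmund decomposition in the spirit of Christ and Stein. Your sketch captures the first and third and correctly identifies the stopping-time/recursion framework, but omits the continuity ingredient altogether, and this is a genuine gap rather than a missing detail.

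Your claim that ``the exponent $d(1/r'-1/p)$ is strictly negative, providing the geometric decay needed for summation over $\Lambda$'' has the issue backwards. A negative exponent means $\Lambda^{d(1/r'-1/p)}$ \emph{grows} as $\Lambda\to 0$, so restricting to $\Lambda\le\ell(Q_0)$ leaves the series $\sum_{\Lambda\le\ell(Q_0)}\Lambda^{d(1/r'-1/p)}$ divergent at its small end. The improving bound of Theorem \ref{Thm:0} alone, applied to Calder\'on--Zygmund pieces of $f$ living on stopping cubes $Q$ with $\ell(Q)\ll\Lambda$, gives no gain, because it sees only the size of each piece and not its cancellation. What closes Lacey's argument is a H\"older-continuity (smoothness) refinement of the improving estimate: translating the input by $y$ changes $\sup_{\Lambda\le\lambda<2\Lambda}|\textbf{A}^d_\lambda f|$ in $L^p\to L^{r'}$ norm by a factor $\lesssim(|y|/\Lambda)^\eta$ for some $\eta>0$, obtained by interpolating Theorem \ref{Thm:0} against an $L^2$ bound for a single Littlewood--Paley annulus. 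Applied to the mean-zero bad functions, this produces precisely the decay $(\ell(Q)/\Lambda)^\eta$ that makes the scale sum converge. Your discussion of the ``threshold scales $\Lambda\sim\ell(Q)$'' and ``a careful balance between the $L^\infty$ trivial bound and the improving bound'' gestures at the right difficulty, but does not supply this factor, and without it the single-cube estimate does not close. The discrete analogue later in the paper reflects exactly this structure: the Littlewood--Paley operators $P_{N/\Lambda}$ and the gain $N^{1-d/2}$ appearing in \eqref{Est:maxl2} serve as the continuity estimate's discrete counterpart.
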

Magyar, Stein, and Wainger prove their discrete spherical maximal theorem in \cite{Magyar}:  

\begin{theorem}\label{Thm:Magyar}
For each $\lambda \in \tilde{\Lambda}:= \left\{ \lambda >0 : \lambda^2 \in \mathbb{N} \right\}$ define the discrete spherical average 
\begin{align*}
\mathscr{A}_\lambda f(x)= \frac{1}{|\{ |y|=\lambda\}|} \sum_{y \in \mathbb{Z}^d : |y| = \lambda} f(x-y). 
\end{align*}
 Then for all $d \geq 5$ and $ \frac{d}{d-2}< p \leq \infty$
\begin{align*}
\left| \left| \sup_{\lambda \in \tilde{\Lambda}} |\mathscr{A}_\lambda | \right| \right|_{\ell^p \to \ell^p} <\infty. 
\end{align*}
\end{theorem}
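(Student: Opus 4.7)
The plan is to apply the Hardy-Littlewood circle method to the Fourier multiplier of $\mathscr{A}_\lambda$, decomposing it into a main term controlled by transferring Stein's continuous spherical maximal theorem to $\mathbb{Z}^d$ plus an error term controlled by Plancherel and interpolation.

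First I would record the multiplier formula
\begin{align*}
\widehat{\mathscr{A}_\lambda}(\xi) = \frac{1}{N(\lambda)}\sum_{y\in\mathbb{Z}^d,\, |y|=\lambda} e^{-2\pi i y\cdot\xi},\qquad N(\lambda)\sim \lambda^{d-2}\;(d\geq 5),
\end{align*}
where the asymptotic for $N(\lambda)$ is the classical Hardy-Littlewood count of lattice points on the sphere, valid precisely because $d\geq 5$. I would then perform a Farey dissection of the torus $[0,1)^d$ into major arcs around rationals $a/q$ with small denominator and minor arcs elsewhere. Poisson summation on the major arcs, combined with standard Gauss sum evaluations, yields the decomposition $\widehat{\mathscr{A}_\lambda} = M_\lambda + E_\lambda$ where
\begin{align*}
M_\lambda(\xi) = \sum_{q=1}^\infty \sum_{\substack{a\in(\mathbb{Z}/q\mathbb{Z})^d\\(a,q)=1}} \frac{G(a,q)}{q^d}\,\psi_q(\xi - a/q)\,\widehat{d\sigma_\lambda}(\xi-a/q),
\end{align*}
with $G(a,q)$ the Gauss sum (satisfying $|G(a,q)/q^d|\lesssim q^{-d/2}$), $\psi_q$ a smooth bump at scale $q^{-1}$, and $d\sigma_\lambda$ the continuous spherical surface measure of radius $\lambda$.

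For the main term I would group by denominator $q$ and apply the Magyar-Stein-Wainger transference principle: each $q$-piece is a periodic lift of the continuous spherical multiplier, so Stein's $L^p$-maximal bound on $\mathbb{R}^d$ transfers to an $\ell^p(\mathbb{Z}^d)$ estimate with an explicit loss in $q$. Multiplying by the Gauss sum decay $q^{-d/2}$ and by the $\lesssim q$ fractions of denominator $q$, the resulting $q$-series is convergent in the sharp range $p > d/(d-2)$. For the error term $E_\lambda$ I would use Plancherel together with the pointwise decay of $E_\lambda(\xi)$ arising from minor-arc estimates to produce an $\ell^2$ bound summable in dyadic $\lambda$, apply a square function to pass from pointwise-in-$\lambda$ estimates to $\sup_\lambda$, and interpolate with the trivial $\ell^\infty\to\ell^\infty$ bound to extend to $p > d/(d-2)$.

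The hard part is the transference step on the major arcs: one must balance the Gauss sum decay $q^{-d/2}$ against both the transference overhead in $q$ and the number of rationals of a given denominator, and this balance is precisely what selects the dimension restriction $d\geq 5$ and the sharp exponent threshold $p > d/(d-2)$. Stein's continuous maximal theorem only requires $d\geq 3$, so the extra two dimensions and the narrowed range of $p$ are the arithmetic cost of the Gauss sum factor introduced by the lattice-point structure of the sphere.
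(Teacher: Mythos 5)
This statement is Theorem~\ref{Thm:Magyar}, which the paper imports from Magyar--Stein--Wainger without reproving; your sketch does follow the standard MSW circle-method strategy, and the main-term outline is essentially right. One small correction there: the transference lemma is uniform in $q$ (there is no ``transference overhead''); the threshold $p > \frac{d}{d-2}$ comes purely from balancing the Gauss-sum decay $q^{-d/2}$, which is available at $\ell^2$ but degenerates to $O(1)$ at $\ell^1$, against the $\phi(q) \lesssim q$ fractions of denominator $q$. You also conflate two distinct sums in the formula for $M_\lambda$: the circle-method parameter $a$ is a \emph{scalar} in $\mathbb{Z}_q^\times$ (so that $\lesssim q$ fractions appear, as you correctly invoke later), while the frequency localizations $\psi_q(\xi - \ell/q)$ are indexed by a separate sum over lattice shifts $\ell \in \mathbb{Z}^d/q\mathbb{Z}^d$; as written, with $a \in (\mathbb{Z}/q\mathbb{Z})^d$, there would be $\sim q^d$ terms per denominator, which would wreck the $q$-summation.

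There is one genuine gap, in the error-term step. You propose to ``interpolate with the trivial $\ell^\infty \to \ell^\infty$ bound to extend to $p > d/(d-2)$,'' but interpolation between $\ell^2$ and $\ell^\infty$ can only reach exponents $p \geq 2$. Since $\frac{d}{d-2} < 2$ for every $d \geq 5$, the claimed range contains a non-empty interval $\bigl(\frac{d}{d-2}, 2\bigr)$ that this interpolation cannot touch. The correct move is to interpolate toward $\ell^1$: pair the summable decay
$\bigl\| \sup_{\Lambda \leq \lambda < 2\Lambda} |\mathscr{R}_\lambda| \bigr\|_{\ell^2 \to \ell^2} \lesssim \Lambda^{2 - d/2}$
with the crude growth bound $\bigl\| \sup_{\Lambda \leq \lambda < 2\Lambda}|\mathscr{A}_\lambda|\bigr\|_{\ell^1 \to \ell^1} \lesssim \Lambda^{2}$ (recorded in \S3 of the paper, and similarly for the main term so that the residual inherits it). Interpolating at $\tfrac{1}{p} = 1 - \tfrac{\theta}{2}$ produces $\Lambda^{2 - d\theta/2}$, which sums over $\Lambda \in 2^{\mathbb{N}}$ exactly when $\theta > \tfrac{4}{d}$, i.e.\ $p > \tfrac{d}{d-2}$. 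So the sharp threshold does emerge --- but only from the $\ell^1$ endpoint, not the $\ell^\infty$ one.
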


\begin{figure}\label{f:}

\begin{tikzpicture}[scale=4] 
\draw[thick,->] (-.2,0) -- (1.2,0) node[below] {$ \frac 1 p$};
\draw[thick,->] (0,-.2) -- (0,1.2) node[left] {$ \frac 1 r$};
\draw[fill=green] (0,1) --  (.9,.1) 
-- (.9,.9) 
-- (0,1) 
;
\draw[fill=teal] (0,1) -- (.9,.95) 
-- (.9,.9) 
-- (0,1)

;
\draw (.9,.05) -- (.9,-.05) node[below] {$ \tfrac{d-2}{d}$};
\draw (.05,.1) -- (-.05,.1) node[left] {$ \tfrac {2}{d}$};
\draw (.05, .9) -- (-.05, .9) node[left] {$ \tfrac {d-2}{d}$};
\draw[loosely dashed] (0,1) -- (1.,1.) node[above] {$ (1,1)$} -- (1.,0); 
\draw (.6,.7) node {$ \mathcal{R}(d)$}; 
\end{tikzpicture}

\begin{tikzpicture}[scale=4] 
\draw[thick,->] (-.2,0) -- (1.2,0) node[below] {$ \frac 1 p$};
\draw[thick,->] (0,-.2) -- (0,1.2) node[left] {$ \frac 1 r$};
\draw[fill=yellow] (.1,.9) --  (.9,.1) 
-- (.9,.9) 
-- (.1,.9) 
;
\draw (.1,.05) -- (.1,-.05) node[below] {$ \tfrac {2}{d}$};
\draw (.9,.05) -- (.9,-.05) node[below] {$ \tfrac{d-2}{d}$};
\draw (.05,.1) -- (-.05,.1) node[left] {$ \tfrac {2}{d}$};
\draw (.05, .9) -- (-.05, .9) node[left] {$ \tfrac {d-2}{d}$};
\draw[loosely dashed] (0,1) -- (1.,1.) node[above] {$ (1,1)$} -- (1.,0); 
 \draw[domain=.66:.9,variable=\x,black] 
 plot({\x},{1-\x/(20(1-\x))});
\draw (.6,.7) node {$\mathcal{S}(d)$}; 
\end{tikzpicture}

\caption{The green region $\mathcal{R}(d)$ represents the range of uniform improving properties for $\sup_{\Lambda \leq \lambda < 2 \Lambda}\left| \mathscr{A}_\lambda \right|$ and sparse bounds for $\sup_{\lambda \in \tilde{\Lambda}} |\mathscr{A}_\lambda|$ that we are able to prove. The teal region adjacent to $\mathcal{R}(d)$ represents the range of improving properties for $\sup_{\Lambda \leq \lambda < 2 \Lambda}\left| \mathscr{A}_\lambda \right|$ and sparse bounds for $\sup_{\lambda \in \tilde{\Lambda}} |\mathscr{A}_\lambda|$ that we cannot prove or disprove. The yellow region $\mathcal{S}(d)$ represents the range of improving properties for $\sup_{\Lambda \leq \lambda < 2 \Lambda}\left| \mathscr{C}_\lambda \right|$ and $\sup_{\Lambda \leq \lambda < 2 \Lambda}\left| \mathscr{R}_\lambda \right|$ as well as sparse bounds for $\sup_{\lambda \in \tilde{\Lambda}}\left| \mathscr{C}_\lambda \right|$ and $\sup_{\lambda \in \tilde{\Lambda}}\left| \mathscr{R}_\lambda \right|$ that we are able to prove, where $\mathscr{C}_\lambda$ is given by \eqref{Def:c(lambda)} and $\mathscr{R}_\lambda=\mathscr{A}_\lambda - \mathscr{C}_\lambda$ is the residual term.} 
\end{figure}
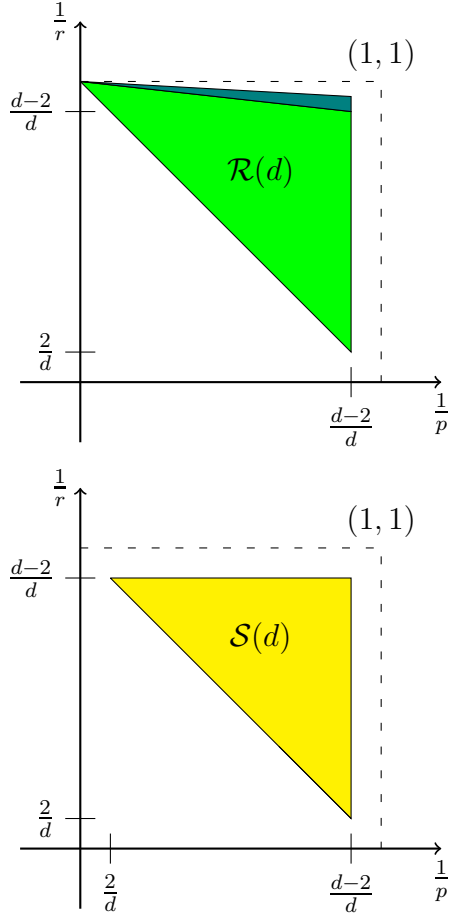
Our first theorem establishes a discrete analogue of Theorem \ref{Thm:0}: 
\begin{theorem}\label{Thm:1}
Let $d \geq 5$. Define $\mathcal{R}(d)$ to be the interior convex hull of 
\begin{align*}
R_{d,1}= \left(0,1 \right) \qquad 
R_{d,2}= \left(\frac{d-2}{d}, \frac{2}{d} \right) \qquad 
R_{d,3} = \left(\frac{d-2}{d}, \frac{d-2}{d} \right).
\end{align*} 
Then for all $(\frac{1}{p}, \frac{1}{r}) \in \mathcal{R}(d)$ there exists $A=A(d,p,r)$ such that for every $f \in \ell^p(\mathbb{Z}^d)$ and $\Lambda \in 2^{\mathbb{N}}$ 
\begin{align}\label{MainEst:Imp}
 \left| \left| \sup_{\Lambda \leq \lambda < 2 \Lambda  } |\mathscr{A}_\lambda |  \right| \right| _{\ell^p \to \ell^{r^\prime}} \leq A \Lambda^{d(1/r^\prime - 1/p)}.  
\end{align}
A necessary condition for \eqref{MainEst:Imp} to hold for all $\Lambda \in 2^{\mathbb{N}}$ is $\max \left\{ \frac{1}{p} +\frac{2}{d} , \frac{1}{r} + \frac{2}{pd}  \right\} \leq 1$. 
\end{theorem}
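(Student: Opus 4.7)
The plan is to prove \eqref{MainEst:Imp} at the three extreme vertices of the closed triangle and then interpolate to cover the open region $\mathcal{R}(d)$. The corner $R_{d,1}=(0,1)$ corresponds to $\ell^\infty\to\ell^\infty$, which is immediate since $\mathscr{A}_\lambda$ is a positive averaging operator and so $\|\sup_\lambda|\mathscr{A}_\lambda f|\|_\infty\leq\|f\|_\infty$; the claimed factor $\Lambda^{d(1/r'-1/p)}$ equals $1$ here. The main analytic content is the corner $R_{d,3}=((d-2)/d,(d-2)/d)$, which asks for
\begin{align*}
\Bigl\|\sup_{\Lambda\leq\lambda<2\Lambda}|\mathscr{A}_\lambda f|\Bigr\|_{\ell^{d/2}(\mathbb{Z}^d)} \lesssim \Lambda^{-(d-4)}\,\|f\|_{\ell^{d/(d-2)}(\mathbb{Z}^d)}.
\end{align*}
Given this, the third corner $R_{d,2}=((d-2)/d,2/d)$ is handled by interpolating $R_{d,3}$ against the Magyar--Stein--Wainger maximal $\ell^p\to\ell^p$ bound at any $p$ slightly above $d/(d-2)$ (available from Theorem~\ref{Thm:Magyar}), which covers a full open neighborhood of $R_{d,2}$ inside $\mathcal{R}(d)$ --- and only the open interior is claimed.

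For the $R_{d,3}$ bound I would invoke the standard MSW arithmetic decomposition $\mathscr{A}_\lambda = \mathscr{C}_\lambda + \mathscr{R}_\lambda$, whose multiplier for $\mathscr{C}_\lambda$ takes the schematic form
\begin{align*}
\widehat{\mathscr{C}_\lambda}(\xi) = \sum_{q\lesssim\Lambda}\sum_{a\in(\mathbb{Z}/q\mathbb{Z})^\times} q^{-d}G(a,q)\,\Psi_q(\xi-a/q)\,\widehat{\mathbf{A}^d_\lambda}(\xi-a/q),
\end{align*}
with $\Psi_q$ a smooth bump of width $\sim q^{-1}$, and $\mathscr{R}_\lambda$ the residual. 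For $\mathscr{C}_\lambda$, I would transfer Schlag's Theorem~\ref{Thm:0} (applied at the continuous interior corner $T_{d,3}$ of $\mathcal{T}(d)$) to each arithmetic piece via a standard lifting argument, exploiting the Weil-type Gauss sum bound $|q^{-d}G(a,q)|\lesssim q^{-d/2}$ to gain summable $q$-decay in dimension $d\geq 5$ and yield the improving factor $\Lambda^{-(d-4)}$. For $\mathscr{R}_\lambda$, I would combine the minor-arc $\ell^2\to\ell^2$ gain (standard in the MSW circle-method analysis, of order roughly $\Lambda^{-(d-4)/2-\varepsilon}$) with the trivial $\ell^1\to\ell^\infty$ estimate of size $\Lambda^{-(d-2)}$ and interpolate to reach the $\ell^{d/(d-2)}\to\ell^{d/2}$ bound with the required decay in $\Lambda$.

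The necessary condition is obtained from two test functions. Taking $f=\delta_0$, one has $\mathscr{A}_\lambda\delta_0\sim\lambda^{-(d-2)}\mathbf{1}_{\{|y|=\lambda\}}$, giving $\|\sup_\lambda|\mathscr{A}_\lambda\delta_0|\|_{\ell^{r'}}\sim\Lambda^{-(d-2)/r}$; comparison with $\Lambda^{d(1/r'-1/p)}$ forces $1/p+2/d\leq 1$. Taking $f=\mathbf{1}_{\{|y|=\Lambda\}}$ and exploiting antipodal symmetry on the discrete sphere gives $\mathscr{A}_\Lambda f(0)=1$, while $\|f\|_{\ell^p}\sim\Lambda^{(d-2)/p}$, forcing $1/r+2/(pd)\leq 1$. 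The main obstacle I expect is the uniform-in-$q$ transference of Schlag's continuous improving bound to each MSW arithmetic piece: the $\lambda$-supremum must survive transference (requiring either a vector-valued transfer or a Sobolev-in-$\lambda$ reduction) and the joint sum over $a$ and $q$ together with the Gauss sum decay must close without a logarithmic loss that would weaken the target $\Lambda^{-(d-4)}$. Threading this arithmetic/analytic needle is the delicate step on which the whole theorem rests.
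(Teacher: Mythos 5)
Your high-level architecture---the Magyar--Stein--Wainger decomposition $\mathscr{A}_\lambda = \mathscr{C}_\lambda + \mathscr{R}_\lambda$, interpolation against the trivial $(0,1)$ endpoint, and the test functions $\delta_0$ and $1_{\{|n|=\Lambda\}}$ for the necessary conditions---matches the paper's, and your necessary-condition arguments are essentially the paper's (up to a small exponent slip in the $\delta_0$ calculation that does not affect the conclusion). But the central step of your $\mathscr{C}_\lambda$ argument has a real gap. The MSW transference lemma (Lemma~\ref{L:Transfer}) transfers only \emph{diagonal} $L^p(\mathbb{R}^d)\to L^p_B(\mathbb{R}^d)$ bounds to $\ell^p(\mathbb{Z}^d)\to\ell^p_B(\mathbb{Z}^d)$ bounds; it does not transfer an off-diagonal improving bound $L^p\to L^{r'}$ with $r'\neq p$, because the $q$-periodization and the unit-scale discretization destroy the dilation structure on which mixed-norm comparisons rest. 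Schlag's corner $T_{d,3}$ has $p=d/(d-1)\neq r'=d$, so ``transferring Schlag via a standard lifting argument'' is not available as a step; and indeed, the paper never invokes Schlag's improving theorem in its proof.

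The paper's route diverges exactly at the spot you flagged as risky. It Littlewood--Paley decomposes each $\mathscr{C}_\lambda^{a/q}$ into frequency shells $P_{N/\Lambda}$, transfers only the $\ell^2\to\ell^2$ maximal bound (where the decay of $\widehat{d\sigma_\lambda}$ on a shell of width $N/\Lambda$ plus a Sobolev-in-$\lambda$ argument gives $N^{1-d/2}$, and the Gauss sums give $q^{-d/2}$), and then recovers off-diagonal improving by interpolating against elementary pointwise kernel bounds on the boundary $\max\{1/p,1/r\}=1$ (Lemmas~\ref{L:Bump} and~\ref{L:Est-MP-Imp}). This yields decay $q^{-2-\eta}N^{-\eta}$ with $\eta>0$ throughout the interior of $\mathcal{S}(d)$, which is what lets the $N$- and $q$-sums close without logarithmic loss. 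For $\mathscr{R}_\lambda$, your proposed two-point interpolation of the $\ell^2$-gain $\Lambda^{2-d/2}$ against a putative $\ell^1\to\ell^\infty$ bound of size $\Lambda^{2-d}$ is numerically consistent at $R_{d,3}$, but establishing that $\ell^1\to\ell^\infty$ bound cleanly is itself nontrivial; the paper instead splits $\mathscr{R}_\lambda = (\mathscr{A}_\lambda-\mathscr{B}_\lambda)+(\mathscr{B}_\lambda-\mathscr{C}_\lambda)$, represents each via a $\tau$-integral, proves $(\tau,q)$-dependent interpolated estimates piece by piece (Lemmas~\ref{L:Est-mu-Imp} and~\ref{L:Est-gamma-Imp}), and only then integrates and sums, so that the interior margin $\eta$ absorbs the $k$- and $q$-sums. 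The Littlewood--Paley cut and the ``interpolate each piece, then sum'' order of operations are both absent from your plan, and both are essential.
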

Our second theorem establishes the following discrete analogue of Theorem \ref{Thm:Lacey}:

\begin{theorem}\label{Thm:2}
Let $d \geq 5$ and $\mathcal{R}(d)$ be as in Theorem \ref{Thm:1}. Then for all $(\frac{1}{p}, \frac{1}{r}) \in \mathcal{R}(d)$
\begin{align}\label{Est:Sp}
\left| \left| \sup_{\lambda \in \tilde{\Lambda}} \left| \mathscr{A}_\lambda \right|: (p,r)  \right| \right| <\infty. 
\end{align}
A necessary condition for \eqref{Est:Sp} to hold is $\max\left\{ \frac{1}{p} + \frac{2}{d}, \frac{1}{r} +\frac{2}{pd} \right\} \leq 1$.

\end{theorem}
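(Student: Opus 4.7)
The plan is to deduce the sparse bound from the single-scale improving estimates of Theorem \ref{Thm:1} via a stopping-time construction, paralleling Lacey's argument in \cite{Lacey} that passes from Theorem \ref{Thm:0} to Theorem \ref{Thm:Lacey}.  Fix $(1/p, 1/r) \in \mathcal{R}(d)$, let $M := \sup_{\lambda \in \tilde{\Lambda}} |\mathscr{A}_\lambda|$, and work with the bilinear pairing $\langle Mf, g\rangle$ for bounded, finitely supported $f,g$ (after linearizing the supremum by selecting a measurable $\lambda(x)$).  The dyadic decomposition $M \leq \sup_{\Lambda \in 2^{\mathbb{N}}} M_\Lambda$, with $M_\Lambda := \sup_{\Lambda \leq \lambda < 2\Lambda} |\mathscr{A}_\lambda|$, combined with Theorem \ref{Thm:1} and H\"older, gives the single-scale bilinear estimate
\begin{align*}
|\langle M_\Lambda f, g\rangle| \lesssim \Lambda^{d(1 - 1/p - 1/r)} \|f\|_{\ell^p} \|g\|_{\ell^r},
\end{align*}
which is the engine driving the sparse bound.

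Given $f, g$ supported in a large dyadic cube $Q_0$, I would build the sparse collection $\mathcal{S}$ recursively: start with $Q_0 \in \mathcal{S}$, and for each $Q \in \mathcal{S}$ declare as stopping children the maximal proper sub-cubes $Q' \subsetneq Q$ on which either $\langle f\rangle_{Q',p} > C \langle f\rangle_{Q,p}$ or $\langle g\rangle_{Q',r} > C \langle g\rangle_{Q,r}$.  For $C$ sufficiently large, Chebyshev shows that the children cover at most $|Q|/2$, so $\mathcal{S}$ is $\tfrac12$-sparse with exceptional set $E_Q := Q \setminus \bigcup\{Q' : Q' \text{ child of } Q\}$.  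The crux is then the per-cube bound
\begin{align*}
|\langle M(f \cdot 1_Q) \cdot 1_{E_Q},\, g \cdot 1_{E_Q}\rangle| \lesssim |Q|\, \langle f\rangle_{Q,p} \langle g\rangle_{Q,r},
\end{align*}
split dyadically in scales.  The dominant scale is $\Lambda \asymp \ell(Q)$, at which the single-scale estimate together with $\|f \cdot 1_Q\|_{\ell^p} \leq |Q|^{1/p} \langle f\rangle_{Q,p}$ and the analogous bound for $g \cdot 1_{E_Q}$ yields exactly the desired factor $|Q|\langle f\rangle_{Q,p}\langle g\rangle_{Q,r}$.  Scales $\Lambda \gg \ell(Q)$ are nearly vanishing on $f \cdot 1_Q$ since the kernel of $\mathscr{A}_\lambda$ lives on a sphere of radius $\lambda$, and the residual boundary contributions are handled by the Magyar--Stein--Wainger bound Theorem \ref{Thm:Magyar} together with the smallness of $\langle g\rangle$ forced by the stopping rule.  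Iterating this estimate down the sparse tree yields $|\langle Mf, g\rangle| \lesssim \Lambda_{\mathcal{S},p,r}(f,g)$.

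The main obstacle I anticipate is summing the single-scale contributions over $\Lambda$, because the exponent $d(1 - 1/p - 1/r)$ fails to have a favorable sign across all of $\mathcal{R}(d)$; for instance, at the vertex $(1/p, 1/r) = ((d-2)/d, (d-2)/d)$ it equals $4 - d < 0$.  The natural remedy, foreshadowed by Figure 1, is the decomposition $\mathscr{A}_\lambda = \mathscr{C}_\lambda + \mathscr{R}_\lambda$: the main term $\mathscr{C}_\lambda$ inherits continuous-model improving bounds via Theorem \ref{Thm:0}, while the residual $\mathscr{R}_\lambda$ enjoys the strictly larger range $\mathcal{S}(d)$ of discrete improving estimates, providing enough slack in each piece separately to absorb off-scale contributions while the sum still lies inside $\mathcal{R}(d)$.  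Finally, the necessary condition $\max\{1/p + 2/d,\; 1/r + 2/pd\} \leq 1$ follows by testing the putative sparse inequality against $f = 1_{\{0\}}$ and $g = 1_{\{x \in \mathbb{Z}^d : |x| = \lambda\}}$, together with its dual pairing, using the classical count $|\{x \in \mathbb{Z}^d : |x| = \lambda\}| \asymp \lambda^{d-2}$ valid for $d \geq 5$.
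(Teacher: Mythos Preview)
Your outline captures the broad architecture, and the counterexample paragraph is essentially what the paper does in Proposition~\ref{P:Sp-N}. However, there is a genuine gap in the positive direction: the ``remedy'' you sketch via the decomposition $\mathscr{A}_\lambda = \mathscr{C}_\lambda + \mathscr{R}_\lambda$ does \emph{not} by itself produce sparse bounds on $\mathcal{R}(d)$. In the paper, the improving estimates \eqref{Est:Improving1} and \eqref{Est:Improving2} for $\mathscr{C}_\lambda$ and $\mathscr{R}_\lambda$ are established only on the smaller region $\mathcal{S}(d)$ (the triangle with vertices $(\tfrac{2}{d},\tfrac{d-2}{d})$, $(\tfrac{d-2}{d},\tfrac{2}{d})$, $(\tfrac{d-2}{d},\tfrac{d-2}{d})$), and the resulting sparse bounds in Theorem~\ref{Thm:Sp-M} and Proposition~\ref{P:Sp-E} likewise hold only on $\mathcal{S}(d)$. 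The vertex $(0,1)$ of $\mathcal{R}(d)$ lies outside $\mathcal{S}(d)$, and as the paper notes explicitly, the passage from $\mathcal{S}(d)$ to $\mathcal{R}(d)$ cannot be accomplished by ordinary interpolation: there is no general sparse interpolation machinery, and the point $(0,1)$ is on the duality line $\tfrac{1}{p}+\tfrac{1}{r}=1$, so there is no non-trivial sparse bound there to interpolate against.

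The paper closes this gap with two additional ingredients you do not mention. First, Theorem~\ref{Restr-Sparse} shows that restricted weak-type sparse bounds (testing against indicators) self-improve to genuine sparse bounds at slightly larger exponents; this reduces matters to a restricted weak-type statement along the segment joining $(0,1)$ to $(\tfrac{d-2}{d},\tfrac{d-2}{d})$. Second, Theorem~\ref{Cor:Loc} records a \emph{localized} sparse bound on $\mathcal{S}(d)$, in which the sparse cubes produced are guaranteed to contain a prescribed stopping cube. In the proof of Theorem~\ref{Thm:Sp-Corn}, one runs a stopping time at exponents $(p_1,r_1)$ near $(\tfrac{d-2}{d},\tfrac{d-2}{d})$, applies the localized sparse bound to the ``large-sphere'' piece, observes that every resulting sparse cube contains a stopping cube and hence has controlled $(p_1,r_1)$-averages, and then takes the minimum of this estimate with the trivial $\ell^\infty\to\ell^\infty$ bound to land at the desired point $(\tfrac{1}{p},\tfrac{1}{r})$ on the segment. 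Without this mechanism, your argument stalls at $\mathcal{S}(d)$.
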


 \section{Discussion of Results}
 While the study of improving properties for discrete maximal averages is new, much effort has focused on obtaining $\ell^p(\mathbb{Z}^d)$-estimates for discrete operators in harmonic analysis since the foundational work of Bourgain on ergodic theorems concerning polynomial averages. For instance, a number of delicate $\ell^p(\mathbb{Z}^d)$-bounds are obtained in the setting of radon transforms in \cite{Pierce,Ionescu,Mirek}, fractional variants in \cite{Stein3,Pierce2}, and Carleson operators in \cite{Krause}. A well-known technique in this setting is the circle method of Hardy, Littlewood, and Ramanujan, which Magyar, Stein, and Wainger apply for the discrete spherical maximal averages in \cite{Magyar} to prove Theorem \ref{Thm:Magyar} by decomposing $\mathscr{A}_\lambda = \mathscr{C}_\lambda+ \mathscr{R}_\lambda $, where $\mathscr{C}_\lambda$ is consists of a sum of modulated and fourier-localized copies of the continuous spherical averaging operator and $\mathscr{R}_\lambda$ is the residual term. We shall define $\mathscr{C}_\lambda$, and thereby define $\mathscr{R}_\lambda$, in \S{2}.

In the case where the supremum is taken only over discrete spherical averages with radii belonging to a thin set, for example a lacunary sequence, one can expand the range of sparse and $\ell^p$-$\ell^q$ improving estimate beyond $\mathcal{R}(d)$ by using Kloosterman and Ramanujan sum refinements, and a good $L^\infty(\mathbb{T}^d)$ estimate on the symbol of $\mathscr{R}_\lambda$, namely $O_\delta( \lambda ^{- \frac{d-3}{2}+\delta})$ for all $\delta>0$ from \cite{Magyar1}. However, if the radii appearing in the supremum cluster too closely together, then one cannot reduce the argument to an estimate that is uniform in $\lambda$. It is for this reason that our analysis of the residual term $\mathscr{R}_\lambda$ in this paper is substantially more involved than in the lacunary case \cite{Kesler10}. Moreover, as this paper only considers the full set of radii, Kloosterman and Ramanujan sums along with a good $L^\infty(\mathbb{T}^d)$ bound on the symbol of the residual operator $\mathscr{R}_\lambda$ are not able to improve our results and are therefore omitted from the analysis. 

More than half of the paper is dedicated to obtaining sparse bounds for discrete maximal spherical averages in the full supremum case. Pointwise sparse domination for Calder\'on-Zygmund operators is obtained by Conde-Alonso and Rey in \cite{Conde-Alonso} and is recently obtained as a consequence of work by Lacey in \cite{Lacey2} on martingale transforms using a stopping time argument. Sparse form domination is a relaxation of the pointwise approach and holds in many settings, including Bochner-Riesz operators in \cite{ Kesler2} and oscillatory integrals in \cite{Lacey4} to name but a few.  

Recent work of Lacey establishes sparse form domination for the continuous spherical maximal averages using the improving estimates in Theorem \ref{Thm:0} and thereby shows a variety of weighted inequalities.
The underlying method of proof relies on Theorem \ref{Thm:0}, a certain continuity property derived by interpolating against a favorable $\ell^2(\mathbb{Z}^d) \rightarrow \ell^2(\mathbb{Z}^d)$ estimate, and a carefully applied Calder\'on-Zygmund decomposition in a manner related to Christ and Stein's analysis in \cite{Christ}. Moreover, there are several recent sparse results in the discrete setting involving random Carleson operators in \cite{Krause2}, the cubic Hilbert transform in \cite{Culiuc}, and a family of quadratically modulated Hilbert transforms in \cite{Kesler}.

The proof of Theorem \ref{Thm:1} reduces to showing that for all $(\frac{1}{p}, \frac{1}{r}) \in \mathcal{S}(d)$ there exists $A=A(d,p,r)$ such that for every $f \in \ell^p(\mathbb{Z}^d)$ and $\Lambda \in 2^{\mathbb{N}}$ 
\begin{align}
\left|\left| \sup_{\Lambda \leq \lambda < 2 \Lambda} |\mathscr{C}_\lambda  | : \right|\right|_{\ell^p \to \ell^{r^\prime}}  \leq A \Lambda^{d(1/r^\prime - 1/p)} \label{Est:Improving1} \\ 
 \left| \left| \sup_{\Lambda \leq \lambda < 2\Lambda} |\mathscr{R}_\lambda  | \right| \right|_{\ell^p \to \ell^{r^\prime}} \leq A \Lambda^{d(1/r^\prime - 1/p)} \label{Est:Improving2},
 \end{align}
where $\mathcal{S}(d)$ is the interior convex hull of 
\begin{align}
S_{d,1} = \left( \frac{2}d , \frac{d-2}d \right) \qquad  
S_{d,2} = \left(\frac{d-2}{d}, \frac{2}{d} \right) \qquad 
S_{d,3} =\left(\frac{d-2}{d}, \frac{d-2}{d} \right).
\end{align}
Indeed, estimate \eqref{MainEst:Imp} is an immediate consequence of interpolating estimates close to $(\frac{d-2}{d}, \frac{d-2}d)$ with the trivial endpoint estimate at $(0,1)$. Furthermore, the arguments for \eqref{Est:Improving1} and \eqref{Est:Improving2} rely on interpolating between favorable $\ell^2\rightarrow \ell^2$ bounds and boundary estimates arising from point-wise control of various kernels. See figure \ref{f:} for a depiction of $\mathcal{S}(d)$. 

The proof of Theorem \ref{Thm:2} is reduced to showing that for all $(\frac{1}{p}, \frac{1}{r}) \in \mathcal{S}(d)$ 
\begin{align}
& \left| \left| \sup_{\lambda \in \tilde{\Lambda}} \left| \mathscr{C}_\lambda \right|: (p,r)  \right| \right| <\infty\label{Est:MS}\\ 
& \left| \left| \sup_{\lambda \in \tilde{\Lambda}} \left| \mathscr{R}_\lambda \right|: (p,r)  \right| \right| <\infty\label{Est:ES}
\end{align}
in conjunction with a restricted weak-type type interpolation argument from \cite{Kesler10}.  
The arguments for \eqref{Est:MS} and \eqref{Est:ES} rely on the improving properties in \eqref{Est:Improving2} and \eqref{Est:Improving1} respectively.  

Once we obtain sparse bounds for $\sup_{\lambda \in \tilde{\Lambda}} |\mathscr{C}_\lambda|$ and $\sup_{\lambda \in \tilde{\Lambda}} |\mathscr{R}_\lambda|$ throughout $\mathcal{S}(d)$, we extend these estimates to $(p,r)$-sparse bounds for $(\frac{1}p, \frac{1}{r})$ arbitrarily close to $(0,1)$ by reducing the problem to obtaining restricted weak-type sparse bounds via Theorem \ref{Restr-Sparse} and then applying a localized variant of Theorem \ref{Thm:2} near $(\frac{d-2}d, \frac{d-2}2)$ as described in Theorem \ref{Cor:Loc}. 

A weighted consequence of the sparse bounds in Theorem \ref{Thm:2} is 

\begin{corollary}
For all $d \geq 5$, $w: \mathbb{Z}^d \rightarrow [0, \infty)$, and $\delta>0$ such that $w^{\frac{d}{d-4}+\delta}  \in A_2$,
\begin{align}\label{Cor:Est}
\left| \left| \sup_{\lambda \in \tilde{\Lambda}} |\mathscr{A}_\lambda | : \ell^2(w) \rightarrow \ell^2(w) \right|\right| <\infty. 
\end{align}
Moreover, \eqref{Cor:Est} holds for all weights $w$ in the intersection of the Muckenhoupt class $A_2$ and the reverse H\"older class $RH_r$, since we may choose $r=r(\delta)<2$ so that $w \in A_2 \cap RH_r$ guarantees $w^{\frac{d}{d-4}+\delta} \in A_2$. 
\end{corollary}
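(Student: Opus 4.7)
The proof is the standard sparse-to-weighted transfer applied to the $(p_0,r_0)$-sparse bounds in Theorem \ref{Thm:2}, in the spirit of the weighted corollaries derived by Lacey \cite{Lacey} in the continuous setting. Recall that any $(p_0, r_0)$-sparse bound automatically upgrades to $\ell^s(w) \to \ell^s(w)$ boundedness for all $s \in (p_0, r_0')$ and all weights $w$ in the bilinear Muckenhoupt class $A_{s/p_0,\,(r_0'/s)'}$. I would invoke this mechanism with $s=2$ and select $(\tfrac{1}{p_0},\tfrac{1}{r_0}) \in \mathcal{R}(d)$ along the diagonal $p_0=r_0$, approaching the vertex $(\tfrac{d-2}{d},\tfrac{d-2}{d})$.

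The key reduction uses the Johnson--Neugebauer identity
\[
w \in A_\alpha \cap RH_\beta \iff w^\beta \in A_{\beta(\alpha-1)+1},
\]
which rewrites $w \in A_{2/p_0,\,(r_0'/2)'}$ as the one-parameter condition $w^{(r_0'/2)'} \in A_{(r_0'/2)'(2/p_0-1)+1}$. A direct computation along the diagonal $p_0=r_0$ shows that $(r_0'/2)'(2/p_0-1) \equiv 1$, so the requirement simplifies to $w^{(r_0'/2)'} \in A_2$, while $(r_0'/2)' \to \tfrac{d}{d-4}^+$ as the diagonal point approaches the vertex. Consequently, for any prescribed $\delta > 0$ one chooses the diagonal point so that $(r_0'/2)' \leq \tfrac{d}{d-4} + \delta$; Jensen's inequality applied separately to the cube averages of $w^\gamma$ and $w^{-\gamma}$ then yields the monotonicity $w^\gamma \in A_2 \Rightarrow w^{\gamma'} \in A_2$ for $1 \leq \gamma' \leq \gamma$, and the hypothesis $w^{\tfrac{d}{d-4}+\delta} \in A_2$ is seen to suffice, establishing \eqref{Cor:Est}. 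For the moreover statement, the reformulation $w \in A_2 \cap RH_r \Rightarrow w^{\tfrac{d}{d-4}+\delta} \in A_2$ follows by combining Johnson--Neugebauer (which converts $w \in A_2 \cap RH_r$ into $w^r \in A_{r+1}$) with the quantitative self-improvement of $A_\infty$ weights (any $A_\infty$ weight satisfies a sharp reverse H\"older inequality with a quantitative improvement exponent), iterating a bounded number of times until one reaches the target power $\tfrac{d}{d-4}+\delta$, with $r = r(\delta) < 2$ chosen so that this bootstrap terminates.

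The principal obstacle lies in the exponent book-keeping at the vertex $(\tfrac{d-2}{d},\tfrac{d-2}{d})$: verifying the diagonal identity $(r_0'/2)'(2/p_0-1)=1$ and tracing the Jensen monotonicity that descends from the corollary's hypothesis $w^{\tfrac{d}{d-4}+\delta} \in A_2$ down to the precise exponent $(r_0'/2)'$ produced by the sparse-to-weighted transfer. The moreover reformulation is additionally delicate, requiring a quantitative version of the self-improvement of reverse H\"older classes that tracks explicit constants through the bootstrap.
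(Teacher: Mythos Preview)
The paper does not give a proof of this corollary; it is stated immediately after Theorem~\ref{Thm:2} as ``a weighted consequence of the sparse bounds,'' leaving the passage from sparse bounds to weighted estimates implicit. Your proposal supplies exactly the standard argument one would expect here: the Bernicot--Frey--Petermichl style transfer from a $(p_0,r_0)$-sparse bound to $\ell^2(w)$-boundedness under $w\in A_{2/p_0}\cap RH_{(r_0'/2)'}$, followed by the Johnson--Neugebauer reformulation and the limit along the diagonal toward the vertex $\bigl(\tfrac{d-2}{d},\tfrac{d-2}{d}\bigr)$. Your exponent computations are correct: on the diagonal $p_0=r_0$ one has $(r_0'/2)'\,(2/p_0-1)=1$, so the condition collapses to $w^{(r_0'/2)'}\in A_2$, and $(r_0'/2)'\downarrow \tfrac{d}{d-4}$ as the diagonal point approaches the vertex; the Jensen monotonicity then lets the hypothesis $w^{\frac{d}{d-4}+\delta}\in A_2$ absorb any nearby exponent.

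The only place where your write-up is noticeably looser than needed is the ``moreover'' clause. Your proposed bootstrap via iterated self-improvement is vague and not obviously convergent to the target exponent. A cleaner route is to observe directly from Johnson--Neugebauer that $w^{\frac{d}{d-4}+\delta}\in A_2$ is equivalent to $w\in A_{1+\frac{1}{\frac{d}{d-4}+\delta}}\cap RH_{\frac{d}{d-4}+\delta}$; since $A_{1+\frac{1}{\gamma}}\supset A_2$ whenever $\gamma\ge 1$, it suffices to have $w\in A_2\cap RH_{\frac{d}{d-4}+\delta}$, and then the self-improvement of reverse H\"older classes lets one relax the reverse H\"older exponent slightly. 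This is the kind of one-line remark the paper is gesturing at, though the paper's own parenthetical ``$r=r(\delta)<2$'' is itself somewhat imprecise (note $\tfrac{d}{d-4}\ge 2$ for $5\le d\le 8$), so you should not over-invest in matching it exactly.
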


To the author's knowledge, no $\ell^p(\mathbb{Z}^d)$-improving properties, sparse bounds, or weighted inequalities were previously known for the discrete spherical maximal averages in the full supremum case. We leave open the question of whether the ranges for $\ell^p(\mathbb{Z}^d)$-improving properties in Theorem \ref{Thm:1} and sparse bounds in Theorem \ref{Thm:2} are sharp. 

\vspace{5mm}
This paper is structured as follows: \S{3} introduces relevant background from the proof of the discrete spherical maximal theorem in \cite{Magyar}, \S{4} contains the proof of estimate \eqref{Est:Improving1}, \S{5} contains the proof of estimate \eqref{Est:Improving2}, \S{6} contains the proof of estimate \eqref{Est:MS} , \S{7} contains the proof of estimate \eqref{Est:ES}, \S{8} contains the proof of estimate \eqref{Est:Sp}, and \S{9} contains the counterexamples for the negative content of Theorems \ref{Thm:1} and \ref{Thm:2}.   

\vspace{5mm}

The letter $A$ is always used in the mathematical expressions of this paper to denote a positive constant, which depends only on inessential parameters and whose precise value is allowed to change from line to line.

\section{Decomposition and Transference of Discrete Spherical Averages}

We now introduce the decomposition of the discrete spherical average $\mathscr{A}_\lambda = \mathscr{C}_\lambda + \mathscr{R}_\lambda$ and a transference lemma, both from \cite{Magyar}. The symbol of the multiplier $\mathscr{A}_\lambda$ for $\Lambda \leq \lambda < 2 \Lambda$ and $\Lambda \in 2^{\mathbb{N}}$ can be written as
\begin{align}\label{Def:a(lambda)}
a_\lambda (\xi) =& \sum_{q=1}^\Lambda \sum_{a \in \mathbb{Z}^\times_q } a_\lambda^{a/q}(\xi)
\end{align}
where 
\begin{align}
a_\lambda^{a/q}(\xi) =& e^{-2 \pi i \lambda^2 a/q} \sum_{\ell \in \mathbb{Z}^d} G(a/q, \ell) J_\lambda (a/q, \xi - \ell /q)\\
 G(a/q, \ell)=& \frac{1}{q^d} \sum_{n \in \mathbb{Z}^d/q \mathbb{Z}^d} e^{ 2\pi i |n|^2 a/q} e^{-2 \pi i n \cdot l /q}\label{Def:GS} \\ J_\lambda (a/q , \xi) =& \frac{ e^{ 2 \pi}}{\lambda^{d-2}} \int_{I(a,q)}e^{-2 \pi i \lambda^2 \tau}  \frac{e^{ \frac {- \pi |\xi|^2}{2 ( \epsilon - i \tau)} } }{ 2 (\epsilon - i \tau))^{d/2}} d \tau \\  \epsilon =& \frac{1}{\lambda^2}  
\end{align}
and 
$I(a,q)=\left[- \frac{\beta}{q \Lambda}, \frac{\alpha}{q \Lambda} \right]$ for $\alpha= \alpha(a/q, \Lambda) \simeq 1, \beta = \beta(a/q, \Lambda) \simeq 1$.
Next, we shall pick $\Phi \in \mathscr{C}^\infty ([-1/4, 1/4]^d)$ such that $\Phi\equiv 1$ on $[-1/8, 1/8]^d$ and for $q \in \mathbb{N}$, set $\Phi_q(\xi) = \frac{1}{q^{d}}\Phi\left(\frac{\xi}{q}\right)$, and define
\begin{align}
b_\lambda (\xi) =& \sum_{q=1}^\Lambda \sum_{a \in \mathbb{Z}^\times_q} b_\lambda^{a/q} (\xi) \label{Def:b(lambda)} \\ 
b_\lambda^{a/q} (\xi) =& e^{-2 \pi i \lambda^2 a/q} \sum_{\ell \in \mathbb{Z}^d/q \mathbb{Z}^d} G(a/q, \ell) \Phi_q(\xi - \ell /q)J_\lambda (a/q, \xi - \ell /q) \nonumber 
\end{align}
along with $\mathscr{B}^{a/q}_\lambda: f \mapsto f * \check{b}_\lambda^{a/q}$ and $\mathscr{B}_\lambda: f \mapsto f * \check{b}_\lambda$. 
Therefore, $b_\lambda^{a/q}$ is constructed from $a_\lambda^{a/q}$
by inserting cutoff factors into each summand of $a_\lambda^{a/q}$ at length scale $\frac{1}{q}$. We subsume the difference $b_\lambda - a_\lambda$ into the residual term $\mathscr{R}_\lambda$. Lastly, it is convenient to extend the domain of integration in the definition of $J_\lambda$ to all of $\mathbb{R}$ and subsume this difference as part of the residual term $\mathscr{R}_\lambda$. To this end, we introduce
\begin{align*}
I_\lambda (\xi) = \frac{ e^{ 2 \pi}}{\lambda^{d-2}} \int_{-\infty}^\infty e^{-2 \pi i \lambda^2 \tau} \frac{e^{ \frac {- \pi |\xi|^2}{2 ( \epsilon - i \tau)} }} {(2 (\epsilon - i \tau))^{d/2} } d \tau 
\end{align*}
and let
\begin{align}
c_\lambda (\xi) =& \sum_{q=1}^\Lambda \sum_{a \in \mathbb{Z}^\times_q} c_\lambda^{a/q} (\xi) \label{Def:c(lambda)} \\ 
c_\lambda^{a/q} (\xi) =&  e^{-2 \pi i \lambda^2 a/q} \sum_{\ell \in \mathbb{Z}^d/q\mathbb{Z}^d} G(a/q, \ell) \Phi_q(\xi - \ell /q)I_\lambda ( \xi - \ell /q) \nonumber
\end{align}
along with $\mathscr{C}^{a/q}_\lambda: f \mapsto f * \check{c}_\lambda^{a/q}$, and $\mathscr{C}_\lambda : f \mapsto f* \check{c}_\lambda$.
Since $I_\lambda  = c_d \widehat{d \sigma_\lambda}$, where $c_d$ is a dimensional constant and $d \sigma_\lambda$ is the unit surface measure of the sphere in $\mathbb{R}^d$ of radius $\lambda$, 
\begin{align}
c_\lambda^{a/q}(\xi) =c_d e^{-2 \pi i \lambda^2 a/q} \sum_{\ell \in \mathbb{Z}^d/q \mathbb{Z}^d} G(a/q, \ell) \Phi_q(\xi - \ell /q) \widehat{d \sigma_\lambda} (\xi- \ell /q). 
\end{align}
It follows that
\begin{align*} 
c_\lambda (\xi) =&  \sum_{q =1}^\Lambda \sum_{a \in \mathbb{Z}^\times_q} c_\lambda^{a/q}(\xi) \\=& c_d  \sum_{q =1}^\Lambda \sum_{a \in \mathbb{Z}^\times_q} e^{-2 \pi i \lambda^2 a/q} \sum_{\ell \in \mathbb{Z}^d} G(a/q, \ell) \Phi_q(\xi - \ell /q)\widehat{d\sigma_\lambda} ( \xi - \ell /q),
\end{align*}
$\mathscr{C}_\lambda : f \mapsto f * \check{c}_\lambda$, and $\mathscr{C}_\lambda^{a/q} : f \mapsto f * \check{c}_\lambda^{a/q}$. Lastly, for each $\lambda \in \tilde{\Lambda}$ let $\mathscr{R}_\lambda  = \mathscr{A}_\lambda - \mathscr{C}_\lambda$. An important fact is the Gauss sum estimate 
\begin{align}\label{Est:GS}
\left| G(a/q, \ell) \right| \leq A q^{-d/2}
\end{align}
which holds uniformly in $a,q,$ and $\ell$; this is well-known in the case of $d=1$ case from which the $d \geq 2$ case immediately follows.  We now recall two estimates:
\begin{align}
\left| \left| \sup_{\Lambda \leq \lambda < 2 \Lambda} \left| \mathscr{C}_\lambda^{a/q} \right| \right| \right|_{\ell^2 \to \ell^2}   \leq &A q^{-d/2}\qquad \forall a \in \mathbb{Z}^\times_q, 1 \leq q \leq \Lambda, \Lambda \in 2^{\mathbb{N}} \label{Est:Cl2}\\ 
\left| \left| \sup_{\Lambda \leq \lambda < 2 \Lambda} \left| \mathscr{R}_\lambda  \right| \right| \right|_{\ell^2 \to \ell^2} \leq& A \Lambda^{2-d/2}\qquad \forall a \in \mathbb{Z}^\times_q, 1 \leq q \leq \Lambda, \Lambda \in 2^{\mathbb{N}}  \label{Est:Rl2}
\end{align} 
which are Propositions $3.1$ and $4.1$ from \cite{Magyar} respectively. 
Naturally, these favorable $\ell^2$-bounds are related to the decay of the Gauss sum in \eqref{Est:GS}. 
Furthermore, from the fact that for each $d \geq 5$  there is $A=A(d)$ such that for all $\lambda \in \tilde{\Lambda}$  
\begin{align*}
\frac{ \lambda^{d-2} }{A} \leq \left|\left\{ x \in \mathbb{Z}^d: |x| = \lambda \right\} \right| \leq A \lambda^{d-2},
\end{align*}
the pointwise estimate $\left| \mathscr{A}_\lambda f (x)\right| \leq A \Lambda^2 \left[ \frac{1}{\Lambda^d} \sum_{|y| \leq \Lambda} 
|f(x-y)| \right]$ follows and so
\begin{align}
\left| \left| \sup_{\Lambda \leq \lambda < 2 \Lambda} |\mathscr{A}_\lambda| \right| \right|_{\ell^1(\mathbb{Z}^d)}  \leq A \Lambda^2 || f || _{\ell^1(\mathbb{Z}^d)}.
\end{align}
The transference lemma from \cite{Magyar} can be phrased as follows: 

\begin{lemma}\label{L:Transfer}
For $d \geq 1$ and an integer $q \geq 1$ suppose that $m:[-1/2, 1/2)^d \rightarrow B$ is supported on $[-1/(2q), 1/(2q))^d$, where $B$ is any Banach space. Set
\begin{align*}
m_{per}^q(\xi) = \sum_{\ell \in \mathbb{Z}^d} m(\xi- \ell/q)
\end{align*}
and $T^q_{dis}$ be the convolution operator on $\mathbb{Z}^d$ with $m_{per}^q$ as its multiplier, i.e. for all $f \in \ell^1(\mathbb{Z}^d)$
\begin{align*}
\widehat{T^q_{dis} f } (\xi)  = m_{per}^q(\xi)\hat{f}(\xi) \qquad \forall \xi \in [-1/2, 1/2)^d. 
\end{align*}
Moreover, let $T$ be the convolution operator on $\mathbb{R}^d$ with $m$ as its multiplier. Then there is a constant $A$ such that for any $1 \leq p < \infty$
\begin{align}
\left| \left| T^q_{dis} \right| \right|_{\ell^p(\mathbb{Z}^d) \rightarrow \ell^p_{B}(\mathbb{Z}^d)} \leq A \left| \left| T \right| \right|_{L^p(\mathbb{R}^d) \rightarrow L^p_{B}(\mathbb{R}^d)}.  
\end{align}
\end{lemma}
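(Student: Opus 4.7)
The plan is to reduce the general $q$ case to $q=1$ via a coset decomposition of $\mathbb{Z}^d$ into residue classes modulo $q\mathbb{Z}^d$, and then prove the $q=1$ instance by ``smearing'' the discrete data against a Schwartz bump whose Fourier transform is identically $1$ on the support of $m$.

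\textbf{Step 1: Reduction to $q=1$.} First I would compute the kernel of $T^q_{dis}$ directly. Splitting $\mathbb{T}^d=[-1/2,1/2)^d$ into $q^d$ cubes of side $1/q$ around the points $k/q$ for $k\in\{0,\dots,q-1\}^d$, using the support hypothesis on $m$, and invoking the geometric identity $\sum_{k\in\{0,\dots,q-1\}^d} e^{2\pi i n\cdot k/q}=q^d\mathbf{1}_{n\in q\mathbb{Z}^d}$, one obtains $K^q_{dis}(n)= q^d\,\check m(n)\,\mathbf{1}_{n\in q\mathbb{Z}^d}$. In particular $T^q_{dis}$ preserves residues modulo $q\mathbb{Z}^d$: writing $n=qj+r$ with $r\in\{0,\dots,q-1\}^d$ and setting $f^{(r)}(j):=f(qj+r)$, a direct computation gives $T^q_{dis}f(qj+r)=\widetilde T^1_{dis} f^{(r)}(j)$, where $\widetilde T^1_{dis}$ is the discrete operator associated to the rescaled multiplier $\widetilde m(\xi):=m(\xi/q)$, which is supported in $[-1/2,1/2)^d$. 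Summing over cosets yields the exact identities $\|T^q_{dis}f\|_{\ell^p_B}^p = \sum_r \|\widetilde T^1_{dis} f^{(r)}\|_{\ell^p_B}^p$ and $\|f\|_{\ell^p}^p = \sum_r \|f^{(r)}\|_{\ell^p}^p$, while conjugation by dilation shows $\|\widetilde T\|_{L^p\to L^p_B}=\|T\|_{L^p\to L^p_B}$. Hence it suffices to establish the lemma for $q=1$.

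\textbf{Step 2: The $q=1$ case.} Fix $\phi\in\mathscr{S}(\mathbb{R}^d)$ with $\widehat\phi$ real-valued, supported in $[-1,1]^d$, and identically $1$ on $[-1/2,1/2]^d$. For finitely supported $f:\mathbb{Z}^d\to\mathbb{C}$ and $g:\mathbb{Z}^d\to B^*$, define the smearings $F(x)=\sum_{n}f(n)\phi(x-n)$ and $G(x)=\sum_{n}g(n)\phi(x-n)$. Rapid decay of $\phi$ combined with Young's convolution inequality yields $\|F\|_{L^p(\mathbb{R}^d)}\le A\|f\|_{\ell^p(\mathbb{Z}^d)}$ and $\|G\|_{L^{p'}_{B^*}(\mathbb{R}^d)}\le A\|g\|_{\ell^{p'}_{B^*}(\mathbb{Z}^d)}$. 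Because $\widehat F(\xi)=\widehat\phi(\xi)\,\widetilde f(\xi)$, where $\widetilde f(\xi)=\sum_n f(n)e^{-2\pi i n\cdot\xi}$, and because $\widehat\phi\equiv 1$ on the support of $m$, a Parseval computation collapses $\langle TF,G\rangle=\int m(\xi)\widehat\phi(\xi)^2 \widetilde f(\xi)\overline{\widetilde g(\xi)}\,d\xi=\int_{\mathbb{T}^d} m(\xi)\widehat f(\xi)\overline{\widehat g(\xi)}\,d\xi=\langle T^1_{dis}f,g\rangle$. Combining with H\"older's inequality and the continuous bound on $T$, and taking the supremum over $g$, concludes the proof.

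\textbf{Main obstacle.} The crux is obtaining a constant that is uniform in $q$. A naive route---shift each of the $q^d$ copies of $m$ appearing in $m^q_{per}$ to the origin by modulating $f$ and $g$ by $e^{-2\pi i\ell\cdot n/q}$, then apply the $q=1$ case to each---produces an extra factor of $q^d$. The structural observation that $T^q_{dis}$ decouples across residue classes $\bmod\,q$, embodied in the kernel identity $K^q_{dis}(n)=q^d\check m(n)\mathbf{1}_{n\in q\mathbb{Z}^d}$, is what absorbs this combinatorial factor and restores a $q$-uniform constant. Once this reduction is in hand, the $q=1$ transference is a routine interplay of Poisson summation, smooth smearing, and duality.
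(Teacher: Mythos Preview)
The paper does not give its own proof of this lemma; it merely quotes the statement from \cite{Magyar} and proceeds. Your argument is correct and is essentially the proof in \cite{Magyar}: the reduction to $q=1$ via the kernel identity $K^q_{dis}(n)=q^d\,\check m(n)\,\mathbf 1_{n\in q\mathbb{Z}^d}$ and the resulting decoupling over residue classes is exactly their Corollary~2.1, and the $q=1$ transference via smearing against a Schwartz bump whose Fourier transform is identically $1$ on $\operatorname{supp} m$ is their Proposition~2.1. The only cosmetic difference is that in Step~2 you close the argument by duality (pairing against a $B^*$-valued $g$ and collapsing $\langle TF,G\rangle=\langle T^1_{dis}f,g\rangle$ on the frequency side), whereas \cite{Magyar} instead observes that $TF$ is band-limited to $[-1/2,1/2)^d$, so $TF|_{\mathbb{Z}^d}=T^1_{dis}f$ exactly, and then invokes a sampling inequality $\|H|_{\mathbb{Z}^d}\|_{\ell^p_B}\le A\|H\|_{L^p_B}$ for band-limited $H$. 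Both routes are standard and equivalent; your identification of the $q$-uniformity issue and its resolution through the coset decomposition is exactly the point of the two-step structure in \cite{Magyar}.
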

Before applying Lemma \ref{L:Transfer} in \S{4} to obtain the sparse bound \eqref{Est:MS}, we shall need to set more notation. First let $\{\psi_{2^k}\}_{k \in \mathbb{Z}}$ be a standard Littlewood-Paley decomposition where each $\psi_k$ is supported in $\{ 2^{k-1} \leq |\xi| \leq 2^{k+1}\}$. For all $q \in \mathbb{N}$ and $N, \Lambda \in 2^{\mathbb{N}}$ such that $N \leq \frac{\Lambda}{q}$ define $P^q_{N/\Lambda}$ for all $f \in \ell^1(\mathbb{Z}^d)$ according to
\begin{align*}
\widehat{ P^q_{N/\Lambda}f} (\xi) = \sum_{\ell \in \mathbb{Z}^d/q \mathbb{Z}^d} \psi_{N/\Lambda}(\xi-\ell/q) \hat{f}(\xi) \qquad \forall \xi \in [-1/2, 1/2)^d. 
\end{align*}
Moreover, for any $\# \in \mathbb{R}_{+}$ let $P_{\leq\#}$ be the operator defined by 
\begin{align*}
\widehat{ P^q_{\leq \#}(f)}(\xi) = \sum_{\ell \in \mathbb{Z}^d / q \mathbb{Z}^d} \sum_{2^k \leq \#} \psi_k(\xi-\ell/q) \widetilde{\Phi}_q(\xi- \ell/q) \hat{f}(\xi) \qquad \forall \xi \in [-1/2, 1/2)^d
\end{align*}
where $\widetilde{\Phi}_q(\cdot) := \tilde{\Phi}(q \cdot) $ for a fixed function $\tilde{\Phi} \in \mathscr{C}^\infty([-3/8, 3/8]^d)$ satisfying $\tilde{\Phi} \equiv 1$ on $[-1/4, 1/4]^d$. 
For convenience, we will just write $P_{N/\Lambda}$ and $P_{< \#}$ instead of $P_{N/\Lambda}^q$ and $P^d_{\leq \#}$; the dependence on $q$ will be implicit but nonetheless clear from context. 
\begin{lemma}
For every $d \geq 5, \delta>0, \frac{d}{d-2} < p \leq 2, N \in 2^{\mathbb{N}}, q \in \mathbb{N}$, and $a \in \mathbb{Z}^\times_q$, there exists $A=A(d,p,\delta)$ such that 
\begin{align}
&\left| \left| \sup_{ \Lambda \geq Nq} \sup_{\Lambda \leq \lambda < 2 \Lambda} \left| \mathscr{C}_{\lambda} ^{a/q} P_{N/ \Lambda}  \right|  \right| \right|_{\ell^p \to \ell^{p}} \leq A N^{1-d(1-1/p)+\delta}  q^{-d(1-1/p)+\delta}  \label{Est:Int} \\ &\left| \left| \sup_{ \Lambda \geq q} \sup_{\Lambda \leq \lambda < 2 \Lambda} \left| \mathscr{C}_{\lambda} ^{a/q} P_{\leq  \frac{1}{\Lambda}} \right| \right| \right| _{\ell^p \to \ell^p} \leq A  q^{-d(1-1/p)+\delta} \label{Est:Int1}.
\end{align}

\end{lemma}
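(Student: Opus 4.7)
The plan is to prove a single-$\Lambda$ bound by Riesz--Thorin interpolation and then pass to $\sup_\Lambda$ via a vector-valued Fefferman--Stein-type argument.

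For the single-$\Lambda$ estimate on $S_\Lambda := \sup_{\lambda \in [\Lambda, 2\Lambda)} |\mathscr{C}_\lambda^{a/q} P_{N/\Lambda}|$, I work with two endpoints. At $\ell^2 \to \ell^2$, Plancherel and the disjoint-tile structure give $\|S_\Lambda\|_{\ell^2 \to \ell^2} \leq A\, q^{-d/2} N^{-(d-1)/2}$: on each of the disjoint $\tfrac{1}{q}$-tiles centered at $\ell/q$, the multiplier is bounded by $|G(a/q, \ell)| \cdot |\widehat{d\sigma_\lambda}(\xi - \ell/q)| \cdot |\psi_{N/\Lambda}(\xi - \ell/q)| \leq A\, q^{-d/2} N^{-(d-1)/2}$ via \eqref{Est:GS} together with the decay $|\widehat{d\sigma_\lambda}(\eta)| \leq A\, (\lambda|\eta|)^{-(d-1)/2} \simeq A\, N^{-(d-1)/2}$ on the support of $\psi_{N/\Lambda}(\cdot - \ell/q)$ and $\lambda \sim \Lambda$; a standard Stein-type maximal-function argument absorbs the $\sup_\lambda$. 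At $\ell^1 \to \ell^1$, the Gauss sum identity $\sum_{\ell \in \mathbb{Z}^d/q\mathbb{Z}^d} G(a/q, \ell) e^{2\pi i y \cdot \ell/q} = e^{2\pi i |y|^2 a/q}$, valid on $\mathbb{Z}^d$, shows that the convolution kernel of $\mathscr{C}_\lambda^{a/q} P_{N/\Lambda}$ factors as a unimodular $q$-periodic phase times the continuous Schwartz kernel $c_d \check{\Phi}_q * \check{\psi}_{N/\Lambda} * d\sigma_\lambda$; this kernel has pointwise size $\leq A\, N \Lambda^{-d}$ on a thickened sphere of radius $\lambda$ and thickness $\Lambda/N$, with rapid decay outside, whence $\|\sup_\lambda |K_\lambda^{a/q, N/\Lambda}|\|_{\ell^1} \leq A\, N$. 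Riesz--Thorin interpolation gives $\|S_\Lambda\|_{\ell^p \to \ell^p} \leq A\, q^{-d(1-1/p)} N^{1-(d+1)(1-1/p)}$ for $p \in (1, 2]$.

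To pass to $\sup_{\Lambda \geq Nq}$, I invoke a vector-valued Fefferman--Stein inequality combined with the near-orthogonality of the Littlewood--Paley pieces $P_{N/\Lambda}$ across distinct dyadic $\Lambda$; for $p \in (d/(d-2), 2]$ (the hypothesis), this produces
\[
\Big\| \sup_{\Lambda \geq Nq} S_\Lambda f \Big\|_{\ell^p} \leq A\, \bigl( \sup_\Lambda \|S_\Lambda\|_{\ell^p \to \ell^p} \bigr) \|f\|_{\ell^p}
\]
up to an $N^\delta q^\delta$ factor which, combined with the single-$\Lambda$ bound, yields an estimate stronger than the claimed $A\, N^{1-d(1-1/p)+\delta} q^{-d(1-1/p)+\delta}$ (by an $N^{-1/p'}$ margin), establishing \eqref{Est:Int}. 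Estimate \eqref{Est:Int1} follows from the same scheme applied with $P_{\leq 1/\Lambda}$ in place of $P_{N/\Lambda}$: since $\lambda|\eta| = O(1)$ on the frequency support, no $\widehat{d\sigma_\lambda}$-decay is available, the $\ell^2$-endpoint reduces to $A\, q^{-d/2}$ without the $N^{-(d-1)/2}$ factor, and interpolation with the vector-valued passage yields $A\, q^{-d(1-1/p)+\delta}$.

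The main obstacle is the passage from the single-$\Lambda$ estimate to the $\sup_{\Lambda}$ version. A direct pointwise summation $\sum_y \sup_\Lambda |K_\Lambda(y)|$ diverges, reflecting that the continuous spherical maximal is not $L^1$-bounded; the supremum over $\Lambda$ must therefore be kept inside a vector-valued argument. The hypothesis $p > d/(d-2)$ enters precisely here, via the vector-valued boundedness of the $\mathscr{C}_\lambda^{a/q}$-maximal operator (underlying \eqref{Est:Cl2}) composed with Littlewood--Paley theory on $\ell^p(\mathbb{Z}^d)$. The delicate part is tracking the $N$- and $q$-dependence through this composition without losing them to a naive dyadic summation over $\Lambda$, which would dominate any genuine improvement.
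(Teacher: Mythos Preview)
Your passage from the single-$\Lambda$ bound to the full $\sup_{\Lambda}$ bound is the genuine gap, and it is exactly where the paper's proof takes a different route. The paper does \emph{not} interpolate single-$\Lambda$ estimates and then pass to the supremum; it proves the full maximal bound over both $\Lambda$ and $\lambda$ directly at two endpoints and only then interpolates. Concretely, the multiplier is factored as
\[
e^{2\pi i \lambda^2 a/q}\, c_\lambda^{a/q}(\xi) = c_{\lambda,1}^{a/q}(\xi)\cdot c_{\lambda,2}^{a/q}(\xi),
\]
where $c_{\lambda,1}^{a/q}(\xi)=\sum_\ell G(a/q,\ell)\tilde\Phi_q(\xi-\ell/q)$ carries the Gauss-sum piece and $c_{\lambda,2}^{a/q}(\xi)=\sum_\ell \Phi_q(\xi-\ell/q)\widehat{d\sigma_\lambda}(\xi-\ell/q)$ carries the spherical piece. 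At $p=2$, the transference lemma sends the full maximal operator $\sup_{\Lambda}\sup_{\lambda}|T_{c_{\lambda,2}^{a/q}}P_{N/\Lambda}|$ to its continuous analogue, where standard Stein-type square-function methods give the $N^{1-d/2}$ bound with the $\sup_\Lambda$ already inside. At the low endpoint $p=1+\delta$, the paper does not use an $\ell^1$ kernel estimate at all; instead the pointwise bound $|T_{c^q_{\lambda,N}} f(x)| \leq A\,N\, M_{HL}f(x)$, uniform in $\lambda$ and $\Lambda$, kills the entire supremum at one stroke and reduces matters to the $\ell^{1+\delta}$ boundedness of $M_{HL}$. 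Interpolation of these two maximal estimates yields the lemma.

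Your proposed scheme---interpolate first, then appeal to ``vector-valued Fefferman--Stein combined with near-orthogonality of $P_{N/\Lambda}$''---is not made precise, and it is not clear that it can be. Interpolation does not commute with $\sup_\Lambda$, and the Littlewood--Paley orthogonality of the $P_{N/\Lambda}$ pieces only gives a square-function substitute for the supremum at $p=2$; for $p$ near $d/(d-2)$ there is no free mechanism to convert a uniform single-$\Lambda$ bound into a maximal one without an additional ingredient such as the Hardy--Littlewood pointwise domination above. You correctly identify this obstacle in your final paragraph, but you do not actually resolve it. (Minor point: your $\ell^2$ single-$\Lambda$ bound should be $N^{1-d/2}$, not $N^{-(d-1)/2}$; the $\sup_\lambda$ over $[\Lambda,2\Lambda)$ costs $N^{1/2}$, as the paper notes.)
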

We shall need \eqref{Est:Int1} for the proof of Theorem \ref{Thm:Sp-M}. 
\begin{proof}
Again choosing $\tilde{\Phi} \in \mathscr{C}^\infty([-3/8, 3/8]^d)$ satisfying $\tilde{\Phi} \equiv 1$ on $[-1/4, 1/4]^d$ yields
\begin{align}\label{Exp:Decomp}
 e^{2 \pi i \lambda^2 a/q} \cdot c_\lambda^{a/q}(\xi) =&  \left[ \sum_{\ell \in \mathbb{Z}^d} G(a/q, \ell) \tilde{\Phi}_q(\xi - \ell /q) \right] \cdot \left[ \sum_{\ell \in \mathbb{Z}^d}  \Phi_q(\xi - \ell /q) \widehat{d \sigma_\lambda} (\xi - \ell /q) \right] \\=:&  c^{a/q}_{\lambda,1} (\xi) \cdot c^{a/q}_{\lambda ,2}(\xi) \nonumber. 
\end{align}
Letting $T_m$ denote the convolution operator with corresponding symbol $m \in L^\infty(\mathbb{T}^d)$, 
we therefore obtain
\begin{align}\label{Est:MP-0}
& \left| \left| \sup_{  \Lambda \geq Nq} ~\sup_{\Lambda \leq \lambda < 2 \Lambda} \left| \mathscr{C}_{\lambda} ^{a/q} P_{N/ \Lambda}|\right| \right| \right|_{\ell^2 \to \ell^2} \leq
\left| \left| \sup_{  \Lambda \geq Nq} ~\sup_{\Lambda \leq \lambda < 2 \Lambda} \left| T_{c^{a/q}_{\lambda,2}} \right|  \right| \right| _{\ell^2 \ti \ell^2} \cdot  \left| \left| T_{c_{\lambda,1}^{a/q}} \right| \right|_{\ell^2 \to \ell^2} . 
\end{align}
An application of Lemma \ref{L:Transfer} to the family of symbols
\begin{align*}
c^q_{\lambda, N}(\xi) := \psi_{N/ \Lambda} (\xi) \Phi_q(\xi)  \widehat{d \sigma_\lambda}(\xi)
\end{align*} 
produces
\begin{align}\label{Est:MP-1}
& \left \lVert \sup_{  \Lambda \geq Nq} ~\sup_{\Lambda \leq \lambda < 2 \Lambda} \left| T_{c^{a/q}_{\lambda,2}} \right| \right \rVert_{\ell^2 \to \ell^2}   
\leq  A 
\left| \left| \sup_{  \Lambda \geq Nq} ~\sup_{\Lambda \leq \lambda < 2 \Lambda} \left| T_{c^q_{\lambda,N}} \right|  \right| \right|_{L^2 \to L^2}. 
\end{align}
By the Plancherel equality and Gauss sum estimate \eqref{Est:GS},  
\begin{align}\label{Est:l2-1}
\left| \left| T_{c_{\lambda, 1}^{a/q}}  \right| \right|_{\ell^2 \to \ell^2} \leq A  q^{-d/2}.
\end{align}
Moreover,
\begin{align}\label{Est:cont-max}
\left| \left| \sup_{  \Lambda \geq Nq} ~\sup_{\Lambda \leq \lambda < 2 \Lambda} \left| T_{c^q_{\lambda, N}} \right|  \right| \right|_{L^2 \to L^2} \leq A N^{1-d/2}. 
\end{align}
Indeed, for fixed $\lambda, \left| \left| T_{c^q_{\lambda, N}}  \right| \right|_{L^2 \to L^2} \leq A N^{1/2-d/2}$ on account of the decay of $\widehat{ d\sigma_\lambda }$ on the support of $\psi_{N/ \Lambda}$. The additional factor of $N^{1/2}$ appearing on the right side of \eqref{Est:cont-max} arises from the supremum over $\lambda$ and can be justified using standard techniques. See, for example, \cite{Stein4} for details. Combining  \eqref{Est:MP-0}, \eqref{Est:MP-1},\eqref{Est:l2-1}, and \eqref{Est:cont-max} yields
\begin{align}\label{Est:maxl2}
\left| \left| \sup_{  \Lambda \geq Nq} ~\sup_{\Lambda \leq \lambda < 2 \Lambda} \left| \mathscr{C}_{\lambda} ^{a/q} P_{N/ \Lambda}\right|   \right| \right| _{\ell^2 \to \ell^2} \leq A    N^{1-d/2} q^{-d/2}.  
\end{align}
Furthermore, from the estimate
\begin{align}\label{Est:Point}
|T_{c^q_{\lambda, N}} f(x)| \leq A  N M_{HL}f(x) \qquad \forall x \in \mathbb{Z}^d
\end{align}
it again follows from Lemma \ref{L:Transfer} that for every $\delta >0$
\begin{align}\label{Est:MP-3}
\left| \left| \sup_{  \Lambda \geq Nq} ~\sup_{\Lambda \leq \lambda < 2 \Lambda} \left| \mathscr{C}_{\lambda} ^{a/q} P_{N/ \Lambda}
\right|   \right| \right|_{\ell^{1+\delta} \to \ell^{1+\delta}} \leq A N  \left| \left| T_{c_{\lambda, 1}^{a/q}}  \right| \right|_{\ell^{1+\delta} \to \ell^{1+\delta}}. 
\end{align}
From the fact that
\begin{align}\label{Est:Gauss1}
\sum_{\ell \in \mathbb{Z}^d / q \mathbb{Z}^d} G(a/q,\ell) e ^{-2 \pi i  y \cdot \ell /q} = e^{2 \pi i |y|^2 a/q} \qquad \forall a \in \mathbb{Z}_q^{\times}, q \in \mathbb{N}, y \in \mathbb{Z}^d/q \mathbb{Z}^d,
\end{align}
we obtain
\begin{align*}
\left| \check{c}_{\lambda, 1}^{a/q}(x) \right| \leq A |\check{\tilde{\Phi}}_q (x) | \qquad \forall x \in \mathbb{Z}^d,
\end{align*}
so that for some $A = A(d)$
\begin{align}\label{Est:l1-1}
\left| \left| T_{c_{\lambda, 1}^{a/q}}  \right| \right|_{\ell^1 \to \ell^1} \leq A. 
\end{align}
Interpolating between \eqref{Est:l2-1} and \eqref{Est:l1-1} yields for every $1 \leq p \leq 2$ 
\begin{align}\label{Est:MP-2}
|| T_{c_{\lambda,1}^{a/q}}  ||_{\ell^p \to \ell^p} \leq A q^{-d(1-1/p)}.
\end{align}
Plugging \eqref{Est:MP-2} into \eqref{Est:MP-3} gives for every $0 <\delta \leq 1$
\begin{align}\label{Est:MP-4}
 \left| \left| \sup_{  \Lambda \geq Nq} ~\sup_{\Lambda \leq \lambda < 2 \Lambda} \left| \mathscr{C}_{\lambda} ^{a/q} P_{N/ \Lambda} |  \right| \right| \right|_{\ell^{1+\delta} \to \ell^{1+\delta}}
 \leq A N q^{- \frac{d\delta}{1+\delta}}.
\end{align}
Interpolating between \eqref{Est:maxl2} and \eqref{Est:MP-4} shows estimate \eqref{Est:Int}. Using 
\begin{align}\label{Est:maxl20}
\left| \left| \sup_{\Lambda}\sup_{\Lambda \leq \lambda < 2 \Lambda} |\mathscr{C}^{a/q}_\lambda P_{\leq 1/ \Lambda}  | \right| \right|_{\ell^2 \to \ell^2} \leq & A q^{-d/2} 
\end{align}
and the pointwise bound
\begin{align*}
\sup_{\Lambda}\sup_{\Lambda \leq \lambda < 2 \Lambda} | P_{\leq 1/ \Lambda}\left(f * \check{\Phi}_q * d \sigma_\lambda \right) (x)| \leq A M_{HL}f(x) \qquad \forall x \in \mathbb{Z}^d,
\end{align*}
estimate \eqref{Est:Int1} is similarly obtained, and so the details are omitted. 
\end{proof}

\section{Improving Properties for $\sup_{\Lambda \leq \lambda < 2 \Lambda} |\mathscr{C}_\lambda|$}
Our goal in this section is to obtain estimate \eqref{Est:Improving1}, which is the improving property for the $\mathscr{C}_\lambda$ term. 
The argument relies on interpolating between the $\ell^2\rightarrow \ell^2$ bound \eqref{Est:maxl2} and straightforward boundary estimates related to \eqref{Est:Point}. We begin with an elementary lemma, which will also be used later in showing estimates \eqref{Est:MS} and \eqref{Est:ES}.

\begin{lemma}\label{L:Bump}
For every $d \geq 1$ and $\Lambda \in 2^{\mathbb{N}}$, let $\phi^1_\Lambda : \mathbb{Z}^d \to \mathbb{R}$ be given by
\begin{align}\label{Def:Bump}
\phi^1_{\Lambda}(x):=& \frac{1}{\Lambda^d} \left[\frac{1}{1+\frac{|x|}{\Lambda}} \right]^{2d} \qquad \forall x \in \mathbb{Z}^d.
\end{align}
Then for every $1 \leq  p ,r  \leq \infty$ such that $\frac{1}{p} +\frac{1}{r} \geq 1$   
\begin{align}\label{Est:Aux}
|| f*\phi^1_\Lambda ||_{\ell^{r^\prime}(\mathbb{Z}^d)} \leq A \Lambda^{d(1/r^\prime - 1/p)} ||f||_{\ell^p(\mathbb{Z}^d)}. 
\end{align}
\end{lemma}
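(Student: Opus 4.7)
The plan is to prove the lemma as a direct application of Young's convolution inequality on $\mathbb{Z}^d$, combined with an essentially one--line computation of $\lVert \phi^1_\Lambda \rVert_{\ell^q(\mathbb{Z}^d)}$ at the appropriate exponent $q$.

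First, I would fix $p,r$ with $\tfrac{1}{p} + \tfrac{1}{r} \geq 1$ and define $q \in [1,\infty]$ by
\begin{align*}
\frac{1}{q} = 1 + \frac{1}{r'} - \frac{1}{p} = 2 - \frac{1}{p} - \frac{1}{r},
\end{align*}
noting that the hypothesis $\tfrac{1}{p} + \tfrac{1}{r} \geq 1$ is exactly what is needed to ensure $\tfrac{1}{q} \in [0,1]$. Young's inequality on $\mathbb{Z}^d$ then yields
\begin{align*}
\lVert f * \phi^1_\Lambda \rVert_{\ell^{r'}(\mathbb{Z}^d)} \leq \lVert f \rVert_{\ell^p(\mathbb{Z}^d)} \lVert \phi^1_\Lambda \rVert_{\ell^q(\mathbb{Z}^d)}.
\end{align*}

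Second, I would estimate $\lVert \phi^1_\Lambda \rVert_{\ell^q(\mathbb{Z}^d)}$ by comparing it to a Riemann integral. Since the summand $\Lambda^{-dq}(1+|x|/\Lambda)^{-2dq}$ is a radially decreasing function of $x$, and $2dq > d$ for every $q \geq 1$, a standard sum-to-integral comparison (treating the case $\Lambda = 1$ separately, where the bound is trivial) gives
\begin{align*}
\lVert \phi^1_\Lambda \rVert_{\ell^q(\mathbb{Z}^d)}^q \leq A \int_{\mathbb{R}^d} \frac{1}{\Lambda^{dq}}\left( 1 + \frac{|y|}{\Lambda}\right)^{-2dq} dy = A\, \Lambda^{d(1-q)} \int_{\mathbb{R}^d} (1+|u|)^{-2dq} du \leq A \Lambda^{d(1-q)},
\end{align*}
so $\lVert \phi^1_\Lambda \rVert_{\ell^q(\mathbb{Z}^d)} \leq A \Lambda^{d(1/q - 1)}$, with the endpoints $q = 1$ and $q = \infty$ verified directly from $\lVert \phi^1_\Lambda \rVert_{\ell^1} \simeq 1$ and $\lVert \phi^1_\Lambda \rVert_{\ell^\infty} = \Lambda^{-d}$.

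Third, combining the two displays and substituting $\tfrac{1}{q} - 1 = \tfrac{1}{r'} - \tfrac{1}{p}$ yields the claimed estimate \eqref{Est:Aux}. There is no substantive obstacle here: Young's inequality is available in full strength on the discrete group $\mathbb{Z}^d$, and the only bookkeeping item is the endpoint behavior of $q$ as $(1/p, 1/r)$ approaches the boundary segments $1/p + 1/r = 1$ (where $q = 1$) and $(1,1)$ (where $q = \infty$), both of which admit direct verification. The constant $2d$ in the exponent of the bump is comfortably larger than the minimum $d/q$ required for summability, so no sharpness issues arise.
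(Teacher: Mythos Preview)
Your proof is correct. The paper's argument is essentially the same: it establishes the two endpoints $r'=p$ (via $\|\phi^1_\Lambda\|_{\ell^1}\le A$) and $r'=\infty$ (via H\"older) and then interpolates, whereas you apply Young's inequality directly at the intermediate exponent $q$ and compute $\|\phi^1_\Lambda\|_{\ell^q}$; since Young's inequality is itself obtained by exactly that interpolation, the two arguments are equivalent, with yours being marginally more direct.
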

\begin{proof}
Estimate \eqref{Est:Aux} is trivial when $r^\prime = p$  as the kernel belongs to $\ell^1(\mathbb{Z}^d)$ uniformly in $\Lambda$. The estimate when $r^\prime = \infty$ follows immediately from H\"older's inequality. Interpolating between these two cases yields the conclusion of Lemma\ref{L:Bump}. 
\end{proof}
We now use Lemma \ref{L:Bump} to deduce the following improving property. 
\begin{lemma}\label{L:Est-MP-Imp}
Let $d \geq 5$ and $(\frac{1}{p}, \frac{1}{r}) \in \mathcal{S}(d)$. Then there exists $A=A(d,p,r)$ and $\eta=\eta(d,p,r) >0$ such that for every $\Lambda, N \in 2^{\mathbb{N}}$ such that $1 \leq N \leq \frac{\Lambda}{q} $
\begin{align}
\left| \left| \sup_{\Lambda \leq \lambda < 2 \Lambda} |\mathscr{C}^{a/q}_\lambda P_{N/ \Lambda}  |  \right| \right|_{\ell^p \to \ell^{r^\prime}} \leq & A N^{-\eta} q^{-2-\eta} \Lambda^{d(1/r^\prime - 1/p)}  \label{Est:M0}\\
\left| \left| \sup_{\Lambda \leq \lambda < 2 \Lambda} |\mathscr{C}^{a/q}_\lambda P_{\leq 1/\Lambda}  | \right| \right|_{\ell^p \to \ell^{r^\prime}}  \leq & A q^{-2-\eta} \Lambda^{d(1/r^\prime - 1/p)}  \label{Est:M1}.
\end{align}

\end{lemma}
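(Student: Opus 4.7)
The strategy is to interpolate the $\ell^2 \to \ell^2$ bound \eqref{Est:maxl2} against a complementary ``enveloping'' $\ell^p \to \ell^{r'}$ bound obtained from pointwise kernel domination by the bump $\phi^1_\Lambda$ of Lemma \ref{L:Bump}. I write out the plan for \eqref{Est:M0}; \eqref{Est:M1} is parallel with $N \equiv 1$.

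The first step is to establish a pointwise envelope on the kernel $K^{a/q}_{\lambda,N}$ of $\mathscr{C}^{a/q}_\lambda P_{N/\Lambda}$. Since the translates $\Phi_q(\cdot - \ell/q)\psi_{N/\Lambda}(\cdot - \ell/q)$ have pairwise disjoint supports in $\mathbb{T}^d$ across $\ell \in \mathbb{Z}^d/q\mathbb{Z}^d$, taking the inverse Fourier transform of the combined symbol gives
$$K^{a/q}_{\lambda,N}(x) = c_d\, e^{-2\pi i \lambda^2 a/q} \left[\sum_{\ell \in \mathbb{Z}^d/q\mathbb{Z}^d} G(a/q,\ell)\, e^{2\pi i \ell \cdot x/q}\right] \bigl(\check{\Phi}_q \ast \check{\psi}_{N/\Lambda} \ast d\sigma_\lambda\bigr)(x),$$
and the Gauss sum identity \eqref{Est:Gauss1} collapses the bracketed sum into the unimodular phase $e^{2\pi i |x|^2 a/q}$. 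A direct computation using the concentration of $d\sigma_\lambda$ on the sphere of radius $\lambda$ and the Schwartz decay of $\check{\psi}_{N/\Lambda}$ at scale $\Lambda/N$ yields $|\check{\psi}_{N/\Lambda} \ast d\sigma_\lambda|(x) \leq A N \Lambda^{-d}$ on $|x| \leq 3\Lambda$ with rapid decay beyond. Because $q \leq \Lambda/N$, the additional smoothing $\check{\Phi}_q$ acts at a finer scale and does not inflate this bound; sweeping $\lambda$ over $[\Lambda, 2\Lambda)$ widens the shell into the ball $\{|x| \leq 3\Lambda\}$, producing
$$\sup_{\Lambda \leq \lambda < 2\Lambda} \bigl|K^{a/q}_{\lambda,N}(x)\bigr| \leq A\, N\, \phi^1_\Lambda(x), \qquad x \in \mathbb{Z}^d.$$

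This upgrades to $\sup_\lambda |\mathscr{C}^{a/q}_\lambda P_{N/\Lambda} f|(x) \leq A N (\phi^1_\Lambda \ast |f|)(x)$, and Lemma \ref{L:Bump} converts this into the boundary estimate $\|\sup_\lambda |\mathscr{C}^{a/q}_\lambda P_{N/\Lambda} f|\|_{\ell^{r'}} \leq A\, N\, \Lambda^{d(1/r'-1/p)}\, \|f\|_{\ell^p}$ whenever $1/p + 1/r \geq 1$. To interpolate, I pass to the $(1/p, 1/r')$-plane, in which $\mathcal{S}(d)$ is the triangle with vertices $(2/d, 2/d)$, $((d-2)/d, (d-2)/d)$, $((d-2)/d, 2/d)$. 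For any interior point, set $\alpha := \min\{2/p, 2/p', 2/r', 2/r\}$; strict interiority of $\mathcal{S}(d)$ forces each of these four minima to exceed $4/d$, so $\alpha > 4/d$. Writing $(1/p, 1/r') = \alpha(1/2, 1/2) + (1-\alpha)(1/p_0, 1/r'_0)$, the endpoint $(1/p_0, 1/r'_0)$ lies on $\partial [0,1]^2$ and inherits $1/p_0 \geq 1/r'_0$, so the boundary estimate is applicable there. Riesz--Thorin interpolation, after linearizing the sup over $\lambda$ by freezing any measurable selection $\lambda:\mathbb{Z}^d \to [\Lambda, 2\Lambda)$, between the $\ell^2 \to \ell^2$ bound $A\, N^{1-d/2} q^{-d/2}$ and the boundary bound produces
$$\left\|\sup_\lambda |\mathscr{C}^{a/q}_\lambda P_{N/\Lambda}|\right\|_{\ell^p \to \ell^{r'}} \leq A\, N^{1-\alpha d/2}\, q^{-\alpha d/2}\, \Lambda^{d(1/r'-1/p)},$$
and setting $\eta := \alpha d/2 - 2 > 0$ delivers \eqref{Est:M0}. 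Estimate \eqref{Est:M1} follows the same outline with \eqref{Est:maxl20} replacing \eqref{Est:maxl2} and the pointwise $\sup_\lambda |P_{\leq 1/\Lambda}(f \ast \check{\Phi}_q \ast d\sigma_\lambda)| \leq A\, M_{HL} f \lesssim \phi^1_\Lambda \ast |f|$ replacing the $N$-weighted envelope; the $N$-factors simply drop out.

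The principal obstacle is the pointwise envelope: one must extract the sharp $N\Lambda^{-d}$ scaling on the $\Lambda/N$-thick shell from the oscillatory convolution $\check{\psi}_{N/\Lambda} \ast d\sigma_\lambda$, verify that $\check{\Phi}_q$ at scale $q \leq \Lambda/N$ does not spoil this bound, and then absorb the $\lambda$-supremum into an indicator of $\{|x| \leq 3\Lambda\}$ without losing more than a constant. Once the envelope is in hand, the rest is a routine application of Riesz--Thorin and Lemma \ref{L:Bump}.
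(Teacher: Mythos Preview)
Your proposal is correct and follows essentially the same route as the paper: both interpolate the $\ell^2\to\ell^2$ bounds \eqref{Est:maxl2}, \eqref{Est:maxl20} against boundary estimates at $\max\{1/p,1/r\}=1$ obtained from a pointwise kernel envelope of the form $AN\phi^1_\Lambda$ together with Lemma~\ref{L:Bump}. Your derivation of the envelope via the Gauss sum identity \eqref{Est:Gauss1} and your explicit computation of the interpolation parameter $\alpha=\min\{2/p,2/p',2/r,2/r'\}>4/d$ spell out what the paper encodes in \eqref{Est:Ker10}--\eqref{Est:Boun2}, but the argument is the same.
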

\begin{proof}
The proof is by interpolation. Estimates \eqref{Est:maxl2} and \eqref{Est:maxl20} immediately yield
\begin{align}
\left| \left| \sup_{\Lambda \leq \lambda < 2 \Lambda} |\mathscr{C}^{a/q}_\lambda P_{N/ \Lambda} |  \right| \right|_{\ell^2 \to \ell^2} \leq & A N^{1-d/2} q^{-d/2} \label{Est:maxl21}\\
\left| \left| \sup_{\Lambda \leq \lambda < 2 \Lambda} |\mathscr{C}^{a/q}_\lambda P_{\leq 1/ \Lambda}  | \right| \right| _{\ell^2 \to \ell^2}\leq & A q^{-d/2} \label{Est:maxl22}.
\end{align}
We next invoke the estimates valid for all $M \in \mathbb{N}, 1 \leq N \leq \frac{\Lambda}{q},$ and $x \in \mathbb{Z}^d$
\begin{align}
\sup_{\Lambda \leq \lambda < 2 \Lambda} |\mathscr{C}^{a/q}_\lambda P_{N/ \Lambda} f(x)  |  \leq &  \frac{A_M }{\Lambda^d} N  |f|*\left[ \Lambda^d \phi^1_\Lambda(x) \right]^M \label{Est:Ker10} \\ \sup_{\Lambda \leq \lambda < 2 \Lambda} |\mathscr{C}^{a/q}_\lambda P_{\leq 1/ \Lambda} f(x) |  \leq &  \frac{A_M}{\Lambda^d} |f|* \left[ \Lambda^d \phi^1_\Lambda (x) \right]^{M}\label{Est:Ker11}
\end{align}
and Lemma \ref{L:Bump} to deduce that for all $\left( \frac{1}{p} , \frac{1}{r} \right) \in [0,1]^2$ such that $\max\{\frac{1}{p},\frac{1}{r}\} = 1$ 
\begin{align}
\left| \left| \sup_{\Lambda \leq \lambda < 2 \Lambda} |\mathscr{C}^{a/q}_\lambda P_{N/ \Lambda}  | \right| \right|_{\ell^p \to \ell^{r^\prime}} \leq & A N  \Lambda^{d(1/r^\prime - 1/p)} \label{Est:Boun1} \\ \left| \left| \sup_{\Lambda \leq \lambda < 2 \Lambda} |\mathscr{C}^{a/q}_\lambda P_{\leq 1/\Lambda}  | \right| \right|_{\ell^p \to \ell^{r^\prime}} \leq & A \Lambda^{d(1/r^\prime - 1/p)}. \label{Est:Boun2}
\end{align}
Interpolating \eqref{Est:maxl21} and \eqref{Est:Boun1} yields \eqref{Est:M0}, while interpolating \eqref{Est:maxl22} and \eqref{Est:Boun2} yields \eqref{Est:M1}.

\end{proof}
A direct consequence of Lemma \ref{L:Est-MP-Imp} is estimate \eqref{Est:Improving1}, which we record separately as
\begin{prop}\label{P:Imp-M}
Let $d \geq 5$ and $(\frac{1}{p}, \frac{1}{r}) \in \mathcal{S}(d)$. Then there exists $A=A(d,p,r)$ such that for all $\Lambda \in 2^{\mathbb{N}}$
\begin{align*}
\left| \left| \sup_{\Lambda \leq \lambda < 2 \Lambda} |\mathscr{C}_\lambda  |\right| \right|_{\ell^p \to \ell^{r^\prime}} \leq A \Lambda^{d(1/r^\prime - 1/p)}. 
\end{align*}

\end{prop}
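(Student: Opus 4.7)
The plan is to sum the identity \eqref{Def:c(lambda)} across $(q,a)$, further decompose each $\mathscr{C}_\lambda^{a/q}$ into Littlewood-Paley pieces at scales dictated by the support of $\Phi_q$, and then apply the quantitative gains recorded in Lemma \ref{L:Est-MP-Imp}. Since $(\frac{1}{p},\frac{1}{r}) \in \mathcal{S}(d)$ forces $r^\prime \geq p \geq 1$, the $\ell^{r^\prime}$ triangle inequality will be available throughout, and the entire argument reduces to an absolutely convergent triple sum.

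First I would invoke \eqref{Def:c(lambda)} together with Minkowski's inequality to reduce to bounding $\sup_\lambda |\mathscr{C}_\lambda^{a/q} f|$ for each $(q,a)$ individually. Then, since $c_\lambda^{a/q}$ is supported in the union of translates $\ell/q + \mathrm{supp}(\Phi_q) \subset \ell/q + [-1/(4q),1/(4q)]^d$ and $\widetilde{\Phi}_q \equiv 1$ on $\mathrm{supp}(\Phi_q)$, the Littlewood-Paley pieces introduced at the end of \S{3} provide the identity
\[
\mathscr{C}_\lambda^{a/q} f = \mathscr{C}_\lambda^{a/q} P_{\leq 1/\Lambda} f + \sum_{\substack{N \in 2^{\mathbb{N}} \\ 1 \leq N \leq \Lambda/q}} \mathscr{C}_\lambda^{a/q} P_{N/\Lambda} f.
\]
Taking $\ell^{r^\prime}$ norms, invoking Minkowski once more, and applying \eqref{Est:M1} to the low-frequency term along with \eqref{Est:M0} to each dyadic annulus yields
\[
\Bigl\lVert \sup_{\Lambda \leq \lambda < 2\Lambda} |\mathscr{C}_\lambda^{a/q} f| \Bigr\rVert_{\ell^{r^\prime}} \leq A q^{-2-\eta} \Lambda^{d(1/r^\prime - 1/p)} \Bigl(1 + \sum_{N \in 2^{\mathbb{N}}} N^{-\eta}\Bigr) \|f\|_{\ell^p} \leq A q^{-2-\eta} \Lambda^{d(1/r^\prime - 1/p)} \|f\|_{\ell^p},
\]
where the bracketed geometric sum converges because $\eta > 0$.

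To finish, I would sum over $a \in \mathbb{Z}_q^\times$, which costs at most a factor of $q = |\mathbb{Z}_q^\times|$, leaving $A q^{-1-\eta} \Lambda^{d(1/r^\prime - 1/p)}$, and then sum the convergent series $\sum_{q \geq 1} q^{-1-\eta}$ to absorb all remaining factors into the constant $A$. The only nontrivial book-keeping concern is verifying that the range $1 \leq N \leq \Lambda/q$ of dyadic Littlewood-Paley scales is exactly wide enough to cover $\mathrm{supp}(\Phi_q)$ around each lattice point $\ell/q$; this follows from the standard partition-of-unity property of $\{\psi_{2^k}\}$ together with the identification $\Lambda/q \cdot 1/\Lambda = 1/q$. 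Once this verification is in place no further obstacles arise, since the bulk of the analytical work---the $\ell^2$-gain $q^{-d/2}$ from the Gauss sum and the $N^{1-d/2}$ decay from $\widehat{d\sigma_\lambda}$---has already been packaged into Lemma \ref{L:Est-MP-Imp}.
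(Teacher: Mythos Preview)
Your proposal is correct and follows essentially the same route as the paper: decompose $\mathscr{C}_\lambda=\sum_{q,a}\mathscr{C}_\lambda^{a/q}$, split each piece via the Littlewood--Paley projections $P_{\leq 1/\Lambda}$ and $P_{N/\Lambda}$, apply Lemma~\ref{L:Est-MP-Imp}, and sum the resulting geometric series in $N$, $a$, and $q$. The paper's own proof is a one-line reference to summing \eqref{Est:M0}; your write-up is simply a fuller version of that, and in fact is slightly more careful in that you explicitly treat the low-frequency contribution via \eqref{Est:M1}, which the paper's sentence leaves implicit.
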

\begin{proof}
The corollary follows by summing estimate \eqref{Est:M0} over all $N \in 2^{\mathbb{N}}: 1 \leq N \leq \frac{\Lambda}{q}, a \in \mathbb{Z}^\times_q$, and $1 \leq q \leq \Lambda$. 
\end{proof}

\section{Improving Properties for $\sup_{\Lambda \leq \lambda < 2 \Lambda} |\mathscr{R}_\lambda|$}
We obtain estimate \eqref{Est:Improving2} by showing improving properties for $\sup_{\Lambda \leq \lambda < 2 \Lambda} | \mathscr{A}_\lambda - \mathscr{B}_\lambda |$ and $\sup_{\Lambda \leq \lambda < 2 \Lambda} | \mathscr{B}_\lambda - \mathscr{C}_\lambda |$ separately. Recall that  $\mathscr{A}_\lambda: f \mapsto f * \check{a}_\lambda, \mathscr{B}_\lambda: g \mapsto g * \check{b}_\lambda, \mathscr{C}_\lambda: h \mapsto h * \check{c}_\lambda$, where the symbols $a_\lambda, b_\lambda,$ and $c_\lambda$ are defined in \eqref{Def:a(lambda)}, \eqref{Def:b(lambda)}, and \eqref{Def:c(lambda)} respectively. The following result is needed to obtain improving properties for $\sup_{\Lambda \leq \lambda < 2 \Lambda} | \mathscr{A}_\lambda - \mathscr{B}_\lambda |$. 
\begin{lemma}\label{L:Est-mu-Imp}
For $d \geq 5, q \in \mathbb{N}, a \in \mathbb{Z}_q^{\times}$ and $\tau \in \mathbb{R}$, let
 
\begin{align}\label{Def:mu}
\mu_{a/q,\tau,\lambda} (\xi)= \sum_{\ell \in \mathbb{Z}^d}G(a/q, \ell) (1- \Phi_q (\xi - \ell /q)) e^{- \pi |\xi - \ell/q|^2/2 ( \epsilon - i \tau)}.
\end{align}
Then for all $(\frac{1}{p}, \frac{1}{r}) \in \mathcal{S}(d)$, there exists $A=A(d,p,r)$ and $\eta=\eta(d,p,r)>0$ such that for all $k \in \mathbb{Z}_+ , \Lambda \in 2^{\mathbb{N}}$, and $\tau \in I_k(\Lambda) :=\left\{ \tau \in \mathbb{R}: \frac{2^k-1}{\Lambda^2} \leq |\tau| \leq  \frac{2^k}{\Lambda^2}\right\}$
\begin{align}
\left \lVert \sup_{\Lambda \leq \lambda < 2 \Lambda}| T_{ \mu_{a,q,\tau,\lambda}}|:   \right \rVert_{\ell^p \to \ell^{r^\prime}} \leq A 2^{dk/2} \Lambda^{-2-\eta}  \Lambda^{d(1/r^\prime- 1/p)} \label{Est:0}. 
\end{align}

\end{lemma}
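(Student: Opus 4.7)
The plan is to mimic the architecture of the proof of Lemma \ref{L:Est-MP-Imp} in the previous section: prove a favorable $\ell^2\to\ell^2$ bound for the maximal operator associated with $\mu_{a/q,\tau,\lambda}$, prove off-diagonal endpoint estimates coming from pointwise control of the kernel $\check{\mu}_{a/q,\tau,\lambda}$, and then interpolate. The key identity is a restatement of \eqref{Est:Gauss1}: $\check{\mu}_{a/q,\tau,\lambda}(x) = e^{2\pi i |x|^2 a/q} K_{\tau,\lambda}(x)$, where $K_{\tau,\lambda}$ is the inverse Fourier transform of $(1-\Phi_q(\xi))e^{-\pi|\xi|^2/2(\epsilon - i\tau)}$ evaluated at $x \in \mathbb{Z}^d$. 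This identity reduces all kernel bounds to estimating a single scalar-valued Gaussian integral, independent of the Gauss sum structure.

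For the $\ell^2\to\ell^2$ bound, Plancherel reduces the task to bounding $\|\mu_{a/q,\tau,\lambda}\|_{L^\infty(\mathbb{T}^d)}$. The Gauss sum estimate \eqref{Est:GS} gives a factor $q^{-d/2}$, and the remaining sum $\sum_\ell (1-\Phi_q(\xi-\ell/q))e^{-\pi|\xi-\ell/q|^2 \epsilon/(2(\epsilon^2+\tau^2))}$ is controlled by noting that the Gaussian has effective width $\sigma \sim 2^k/\Lambda$ (for $k\geq 1$), while the cutoff $1-\Phi_q$ excludes the $1/q$-neighborhood of each lattice point $\ell/q$. Using the implicit constraint $q\,2^k \lesssim \Lambda$ coming from $I(a,q)$, we are in the regime $\sigma \lesssim 1/q$, and the sum is bounded by a constant (times exponential decay that is only activated when $q 2^k \ll \Lambda$). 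Altogether, $\|\mu_{a/q,\tau,\lambda}\|_{L^\infty}\lesssim q^{-d/2}\lesssim 2^{kd/2}\Lambda^{-d/2}$ since $q^{d/2}(q2^k/\Lambda)^{d/2}\leq q^{d/2}$. To upgrade to the maximal operator, I apply the derivative identity $\sup_\lambda |T_{\mu_\lambda} f|^2 \leq |T_{\mu_\Lambda}f|^2 + 2\int_\Lambda^{2\Lambda}|T_{\mu_\lambda}f||\partial_\lambda T_{\mu_\lambda}f|\,d\lambda$, combined with a parallel $L^\infty$ bound on $\partial_\lambda \mu_{a/q,\tau,\lambda}$, which carries an extra factor of $1/\Lambda$ that absorbs the $\Lambda^{1/2}$ cost of Cauchy-Schwarz.

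For the off-diagonal boundary estimates, I write $K_{\tau,\lambda} = G_\tau - \check{\Phi}_q * G_\tau$, where $G_\tau(x) = (2(\epsilon - i\tau))^{d/2}e^{-2\pi(\epsilon - i\tau)|x|^2}$, and use $|G_\tau(x)|\lesssim 2^{kd/2}\Lambda^{-d}e^{-c|x|^2/\Lambda^2}$. Since $\|\check{\Phi}_q\|_{L^1}\lesssim 1$, we deduce $\|K_{\tau,\lambda}\|_{L^\infty}\lesssim 2^{kd/2}\Lambda^{-d}$ and $\|K_{\tau,\lambda}\|_{L^1}\lesssim \|G_\tau\|_{L^1}\lesssim 2^{kd/2}$. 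The maximal versions follow from the same derivative argument applied to $K_{\tau,\lambda}$, whose $\lambda$-derivative enjoys the analogous bound with an added $1/\Lambda$. This supplies the three endpoint estimates corresponding to $(1/p,1/r')\in\{(1,0),(1,1),(0,0)\}$ with constants $2^{kd/2}\Lambda^{-d}, 2^{kd/2}, 2^{kd/2}$ respectively, and the $(1/2,1/2)$ estimate with constant $2^{kd/2}\Lambda^{-d/2}$.

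Finally, linearizing the supremum (replace $\sup_\lambda |T_{\mu_\lambda} f(x)|$ by $T_{\mu_{\lambda(x)}}f(x)$ for a measurable selection $\lambda:\mathbb{Z}^d\to[\Lambda,2\Lambda)$), Riesz--Thorin interpolation gives at $(1/p,1/r') = a_1(1,0)+a_2(1,1)+a_3(0,0)+a_4(1/2,1/2)$ a bound of order $2^{kd/2}\Lambda^{-da_1 - (d/2)a_4}$. Since $a_1 = 1/p - 1/r'$ is forced, the remaining freedom is the choice of $a_4\in[0,\min(2/r',2(1-1/p))]$, and the desired exponent $-2-\eta + d(1/r'-1/p)$ is achieved when $a_4 \geq (4+2\eta)/d$. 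For $(1/p,1/r)\in\mathcal{S}(d)$ interior, both $2/r'$ and $2(1-1/p)$ strictly exceed $4/d$, so a positive $\eta=\eta(d,p,r)$ works. The main obstacle is the $\ell^2$ step: the worst-case regime $q 2^k \sim \Lambda$ (which arises precisely because $\tau \in I(a,q)\cap I_k(\Lambda)$) is where the narrow-Gaussian estimate $q^{-d/2}$ coincides with the required $2^{kd/2}\Lambda^{-d/2}$, and establishing this matching requires carefully splitting the $\ell$-sum into near/far contributions relative to the Gaussian width and tracking the Gauss sum uniformly.
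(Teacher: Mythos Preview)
Your overall architecture---interpolate an $\ell^2\to\ell^2$ estimate against boundary estimates coming from pointwise kernel control, and handle the supremum via the $\lambda$-derivative---is exactly what the paper does. The kernel identity $\check\mu_{a/q,\tau,\lambda}(x)=e^{2\pi i|x|^2a/q}K_{\tau,\lambda}(x)$ is a clean way to see the boundary bounds, and your interpolation bookkeeping is correct.

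The gap is in the $\ell^2$ step. You claim
\[
\|\mu_{a/q,\tau,\lambda}\|_{L^\infty(\mathbb{T}^d)}\lesssim q^{-d/2}\lesssim 2^{kd/2}\Lambda^{-d/2},
\]
justifying the second inequality by ``$q^{d/2}(q2^k/\Lambda)^{d/2}\leq q^{d/2}$'', i.e.\ by $q2^k\lesssim\Lambda$. But that constraint gives the \emph{opposite} inequality: $q^{-d/2}\lesssim 2^{kd/2}\Lambda^{-d/2}$ is equivalent to $\Lambda\lesssim q2^k$. Concretely, for $q=1$ and $k=0$ you have $q^{-d/2}=1$ while $2^{kd/2}\Lambda^{-d/2}=\Lambda^{-d/2}$, so the bound you need fails by an arbitrary power of $\Lambda$. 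The estimate $\|\mu\|_{L^\infty}\lesssim q^{-d/2}$ is correct but far too weak.

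What the paper actually does (and what you gesture at but then discard as ``exponential decay that is only activated when $q2^k\ll\Lambda$'') is to use the cutoff $(1-\Phi_q)$ as the \emph{primary} mechanism. On its support $|\xi-\ell/q|\gtrsim 1/q$, and the Gaussian has width $\sigma=[(\epsilon^2+\tau^2)/\epsilon]^{1/2}\sim 2^k/\Lambda\lesssim 1/q$, so every surviving term already sits in the tail. Replacing $e^{-x^2}$ by $A(1+x^2)^{-N}$ for $N>d/2$ and summing in $\ell$ gives
\[
|\mu(\xi)|\;\leq\; A\,q^{-d/2}(q\sigma)^{2N}\;\leq\; A\,(q\sigma)^{2N-d/2}\,\sigma^{d/2}\;\leq\; A\,\sigma^{d/2}\;=\;A\bigl[\tfrac{\epsilon^2+\tau^2}{\epsilon}\bigr]^{d/4},
\]
which is precisely the paper's estimate \eqref{Est:Multsize}. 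The decay is not a bonus; it is what converts the Gauss-sum bound $q^{-d/2}$ into the scale-correct bound $\sigma^{d/2}$. Once you have this, the rest of your interpolation goes through verbatim and matches the paper's argument.
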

\begin{proof}
We begin by noting the pointwise bounds
\begin{flalign}
 & \sup_{\Lambda \leq \lambda < 2 \Lambda} | f*   \check{\mu}_{a/q, \tau, \lambda} | \leq  |  f * \check{\mu}_{a/q, \tau, \Lambda} |  + \left( \int_{\Lambda}^{2 \Lambda} \frac{d}{d\lambda} | f * \check{\mu}_{a/q, \tau, \lambda}| ^2 d \lambda \right)^{1/2}  \label{Est:Var1.0}& 
 \end{flalign}
 and
 \begin{align}
 &  \left( \int_{\Lambda}^{2 \Lambda} \frac{d}{d\lambda} | f * \check{\mu}_{a,q, \tau, \lambda}| ^2 d \lambda \right)^{1/2}   \label{Est:Var2.0} \\  \leq &  \left( \int_{\Lambda}^{2 \Lambda}  \left | \frac{d}{d\lambda}  f * \check{\mu}_{a,q, \tau, \lambda} \right| ^2 d \lambda \right)^{1/2} \cdot \left( \int_{\Lambda}^{2 \Lambda}  |   f * \check{\mu}_{a,q, \tau, \lambda}| ^2 d \lambda \right)^{1/2}. \nonumber  
\end{align}
Similar to \cite{Magyar}, we may use the definition of $\check{\mu}_{a,q,\tau, \lambda}$ and 
\begin{align*}
e^{-x^2} \leq A (1+x^2)^{-d/2} \qquad \forall x \in \mathbb{R}
\end{align*}
to verify that
 \begin{align}\label{Est:Multsize}
|| \mu_{a,q,\tau,\lambda}|| _{L^\infty(\mathbb{T}^d)} + || \lambda \frac{d}{d \lambda} \mu_{a,q,\tau,\lambda}|| _{L^\infty(\mathbb{T}^d)} \leq A\left[ \frac{\epsilon^2 + \tau^2}{\epsilon} \right]^{d/4}.
 \end{align}
 Combining \eqref{Est:Var1.0}, \eqref{Est:Var2.0}, and \eqref{Est:Multsize} yields
\begin{align}\label{Est:l2}
\left \lVert \sup_{\Lambda \leq \lambda < 2 \Lambda} |T_{\mu_{a,q,\tau,\lambda}}|  \right \rVert_{\ell^2 \to \ell^2} \leq A \left[ \frac{\epsilon^2 + \tau^2}{\epsilon} \right]^{d/4}.
\end{align}
Consequently, estimate \eqref{Est:0} holds at $(p,r)= (2,2)$.
From the kernel bound
\begin{align}\label{Est:Ker3}
\left| \check{\mu}_{a,q,\tau,\lambda} (x) \right| \leq \frac{ A_M }{\Lambda^d}  \left[ \frac{\epsilon^2+\tau^2}{\epsilon^2} \right]^{d/4} \left[ \Lambda^d \phi^1_{\Lambda}(x) \right]^M \qquad \forall \in \mathbb{N}, x \in \mathbb{Z}^d,
\end{align}
where $\phi^1_{\Lambda}$ is again given by \eqref{Def:Bump}, and  Lemma \ref{L:Bump}, it follows that for all $\left( \frac{1}{p} , \frac{1}{r}\right) \in [0,1]^2$ such that $\max\{\frac{1}{p},\frac{1}{r}\} = 1$ 
\begin{align}\label{Est:-1}
\left| \left| \sup_{\Lambda \leq \lambda < 2 \Lambda} |T_{\mu_{a,q,\tau,\lambda}}| \right| \right|_{\ell^p \to \ell^{r^\prime}} \leq A  \left[ \frac{\epsilon^2+\tau^2}{\epsilon^2} \right]^{d/4} \Lambda^{d(1/r^\prime - 1/p)}. 
\end{align}
 Interpolating \eqref{Est:l2} and \eqref{Est:-1} yields for all $(\frac{1}{p}, \frac{1}{r}) \in \mathcal{S}(d)$
\begin{align}\label{Est:-.5}
||\sup_{\Lambda < \lambda < 2 \Lambda} |T_{ \mu_{a,q,\tau,\lambda}}|  || _{\ell^p \to \ell^{r^\prime}} \leq A  \left[ \frac{\epsilon^2 + \tau^2}{\epsilon} \right] \left[ \frac{\epsilon^2+\tau^2}{\epsilon^2} \right]^{d/4-1} \Lambda^{d(1/r^\prime- 1/p)}. 
\end{align}
Substituting $\epsilon = \frac{1}{\lambda^2}$ and $|\tau| \simeq \frac{2^k}{\Lambda^2}$ into \eqref{Est:-.5}  yields \eqref{Est:0}.
\end{proof} 
The next result is used to obtain improving properties for $\sup_{\Lambda \leq \lambda < 2 \Lambda} | \mathscr{B}_\lambda - \mathscr{C}_\lambda |$. 
\begin{lemma}\label{L:Est-gamma-Imp}
For $d \geq 5, q \in \mathbb{N}$, $a \in \mathbb{Z}^\times_q,$ and $\tau \in \mathbb{R}$, let 
\begin{align*}
\gamma_{a/q,\tau,\lambda}(\xi)= \sum_{\ell \in \mathbb{Z}^d/q\mathbb{Z}^d}G(a/q, \ell) \Phi_q (\xi - \ell /q) e^{- \pi |\xi - \ell/q|^2/2 ( \epsilon - i \tau)}. 
\end{align*}
Then for all $(\frac{1}{p}, \frac{1}{r}) \in \mathcal{S}(d)$, there exists $A=A(d,p,r)$ and $\eta=\eta(d,p,r)>0$ such that for all $k \geq 0 , \Lambda \in 2^{\mathbb{N}}, \tau \in I_k (\Lambda)$,  and $f \in \ell^p(\mathbb{Z}^d)$
\begin{align}
\left \lVert \sup_{\Lambda \leq \lambda < 2 \Lambda}  | T_{ \gamma_{a,q,\tau,\lambda} }| \right \rVert_{\ell^p \to \ell^{r^\prime}} \leq A q^{-2-\eta}  2^{dk(1/2-2/d)} \Lambda^{d(1/r^\prime-1/p)}\label{Est:0.}.
\end{align}
\end{lemma}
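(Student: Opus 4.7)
The plan mirrors that of Lemma \ref{L:Est-mu-Imp}: first derive an $\ell^2\to\ell^2$ bound for $\sup_{\Lambda\leq\lambda<2\Lambda}|T_{\gamma_{a,q,\tau,\lambda}}|$, next a boundary $\ell^p\to\ell^{r^\prime}$ bound on the segment $\max\{1/p,1/r\}=1$, and finally interpolate to recover the full range $\mathcal{S}(d)$.

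For the $\ell^2$ bound, the cutoffs $\{\Phi_q(\xi-\ell/q)\}_{\ell\in\mathbb{Z}^d/q\mathbb{Z}^d}$ appearing in $\gamma_{a/q,\tau,\lambda}$ have pairwise disjoint supports, so at every $\xi$ at most one summand contributes; combined with \eqref{Est:GS} this yields $\|\gamma_{a/q,\tau,\lambda}\|_{L^\infty(\mathbb{T}^d)}\leq A q^{-d/2}$. Writing $u=\pi|\xi|^2\epsilon/(\epsilon^2+\tau^2)$, the identities $|e^{-\pi|\xi|^2/(2(\epsilon-i\tau))}|=e^{-u/2}$ and $|\lambda\partial_\lambda e^{-\pi|\xi|^2/(2(\epsilon-i\tau))}|=u\,e^{-u/2}\leq 2/e$ produce $\|\lambda\partial_\lambda\gamma_{a/q,\tau,\lambda}\|_{L^\infty}\leq A q^{-d/2}$ as well. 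Inserting these into the variation argument \eqref{Est:Var1.0}--\eqref{Est:Var2.0} yields
\begin{align*}
\left\lVert \sup_{\Lambda\leq\lambda<2\Lambda}\left|T_{\gamma_{a,q,\tau,\lambda}}\right|\right\rVert_{\ell^2\to\ell^2}\leq A q^{-d/2}.
\end{align*}

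For the boundary bound, I use \eqref{Est:Gauss1} to identify $\check\gamma_{a/q,\tau,\lambda}(x)=e^{2\pi i|x|^2 a/q}\check h_{q,\tau,\lambda}(x)$, where $h_{q,\tau,\lambda}(\xi)=\Phi_q(\xi)e^{-\pi|\xi|^2/(2(\epsilon-i\tau))}$. Since $h_{q,\tau,\lambda}$ has compact support in $(-1/2,1/2)^d$, its torus Fourier coefficients coincide with its Euclidean Fourier transform at $\mathbb{Z}^d$, and so $\check h_{q,\tau,\lambda}=\widehat{\Phi_q}*\widehat{G_{\tau,\lambda}}$ with $\widehat{G_{\tau,\lambda}}(x)=(2(\epsilon-i\tau))^{d/2}e^{-2\pi(\epsilon-i\tau)|x|^2}$. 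Combining $|\widehat{G_{\tau,\lambda}}(x)|\leq A(\epsilon^2+\tau^2)^{d/4}e^{-2\pi\epsilon|x|^2}$, the $O(1)$ bound on $\|\widehat{\Phi_q}\|_{L^1(\mathbb{R}^d)}$, and the estimate $(\epsilon^2+\tau^2)^{d/4}\leq A\,2^{kd/2}\Lambda^{-d}$ valid on $I_k(\Lambda)$, I obtain, for every $M\in\mathbb{N}$,
\begin{align*}
\left|\check\gamma_{a/q,\tau,\lambda}(x)\right|\leq A_M\,2^{kd/2}\Lambda^{-d}\bigl[\Lambda^d\phi^1_\Lambda(x)\bigr]^M,\qquad x\in\mathbb{Z}^d,
\end{align*}
uniformly in $\lambda\in[\Lambda,2\Lambda)$. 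Lemma \ref{L:Bump} then delivers $\|\sup_{\Lambda\leq\lambda<2\Lambda}|T_{\gamma_{a,q,\tau,\lambda}}|\|_{\ell^p\to\ell^{r^\prime}}\leq A\,2^{kd/2}\Lambda^{d(1/r^\prime-1/p)}$ whenever $\max\{1/p,1/r\}=1$.

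Given $(1/p,1/r)\in\mathcal{S}(d)$, write $(1/p,1/r)=(1-\theta)(1/2,1/2)+\theta(1/p_b,1/r_b)$ with $(1/p_b,1/r_b)$ on the segment $\max=1$; since the point lies strictly inside $\mathcal{S}(d)$, one has $\theta<(d-4)/d$, so $(1-\theta)d/2>2$ and one may select $\eta=\eta(d,p,r)>0$ with $(1-\theta)d/2\geq 2+\eta$. Interpolating the $\ell^2$ and boundary estimates (after the standard measurable linearization of the supremum in $\lambda$) produces
\begin{align*}
\left\lVert \sup_{\Lambda\leq\lambda<2\Lambda}\left|T_{\gamma_{a,q,\tau,\lambda}}\right|\right\rVert_{\ell^p\to\ell^{r^\prime}}&\leq A q^{-(1-\theta)d/2}\,2^{kd\theta/2}\,\Lambda^{d(1/r^\prime-1/p)}\\
&\leq A q^{-2-\eta}\,2^{k(d/2-2)}\,\Lambda^{d(1/r^\prime-1/p)},
\end{align*}
which is precisely \eqref{Est:0.}. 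The principal obstacle is the kernel bound: pinning down the correct amplitude $(\epsilon^2+\tau^2)^{d/4}\sim 2^{kd/2}\Lambda^{-d}$ while retaining rapid decay at the $\Lambda$ scale, accomplished by factoring out the modulation $e^{2\pi i|x|^2 a/q}$ via \eqref{Est:Gauss1} and exploiting the localization of $\Phi_q$.
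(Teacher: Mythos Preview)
Your proof is correct and follows essentially the same route as the paper: an $\ell^2\to\ell^2$ bound from the multiplier estimate $\|\gamma\|_{L^\infty}+\|\lambda\partial_\lambda\gamma\|_{L^\infty}\lesssim q^{-d/2}$ fed into the variation argument \eqref{Est:Var1.0}--\eqref{Est:Var2.0}, a pointwise kernel bound yielding the boundary $\ell^p\to\ell^{r'}$ estimates via Lemma~\ref{L:Bump}, and interpolation across $\mathcal{S}(d)$. Your derivation of the kernel bound via \eqref{Est:Gauss1} and the explicit Gaussian Fourier transform is a more detailed justification of the paper's \eqref{Est:Ker4}, and your computation $\theta=2\max\{1/p,1/r\}-1<(d-4)/d$ makes explicit what the paper leaves implicit in passing from \eqref{Est:l2.}--\eqref{Est:l1} to \eqref{Est:-2}.
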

\begin{proof}
We begin by majorizing
\begin{align}
& \sup_{\Lambda \leq \lambda < 2 \Lambda} | f*   \check{\gamma}_{a,q, \tau, \lambda} |  \nonumber\\ \leq &  |  f * \check{\gamma}_{a,q, \tau, \Lambda} |  + \left( \int_{\Lambda}^{2 \Lambda}  \left | \frac{d}{d\lambda}  f * \check{\gamma}_{a,q, \tau, \lambda} \right| ^2 d \lambda \right)^{1/2} \cdot \left( \int_{\Lambda}^{2 \Lambda}  |   f * \check{\gamma}_{a,q, \tau, \lambda}| ^2 d \lambda \right)^{1/2}  . \label{Est:Var2} 
\end{align}
Using the definition or $\check{\gamma}_{a,q,\tau, \lambda}$, it is straightforward to check that 
\begin{align}
\left \lVert  \gamma_{a,q,\tau, \lambda} \right \rVert_{L^\infty(\mathbb{T}^d)} +    \left \lVert  \lambda \frac{d}{d\lambda}  \gamma_{a,q, \tau, \lambda} \right \rVert_{L^\infty(\mathbb{T}^d)} \leq A q^{-d/2}, \label{Est:Mult2}
\end{align}
so that
\begin{align}\label{Est:l2.}
\left| \left| \sup_{\Lambda \leq \lambda < 2 \Lambda}|T_{\gamma_{a,q,\tau,\lambda}}|  \right| \right| _{\ell^2 \to \ell^2} \leq A q^{-d/2}.
\end{align}
The kernel bound 
\begin{align}\label{Est:Ker4}
\left| \check{\gamma}_{a,q,\tau,\lambda}(x) \right| \leq \frac{A_M}{\Lambda^d} \left[ \frac{\epsilon^2+\tau^2}{\epsilon^2} \right]^{d/4} \left[  \Lambda^d \phi^1_{\Lambda}(x) \right]^M \qquad \forall M \in \mathbb{N}, x \in \mathbb{Z}^d
\end{align}
and Lemma \ref{L:Bump} imply that for all $\left( \frac{1}{p}, \frac{1}{r} \right) \in [0,1]^2$ such that $\max\left( \frac{1}p, \frac{1}r \right) = 1$
\begin{align}\label{Est:l1}
\left \lVert \sup_{\Lambda \leq \lambda < 2 \Lambda} |T_{ \gamma_{a,q,\tau,\lambda}}|  \right \rVert _{\ell^p \to \ell^{r^\prime}} \leq A \left[ \frac{\epsilon^2+\tau^2}{\epsilon^2} \right]^{d/4}  \Lambda^{d(1/r^\prime - 1/p)} . 
\end{align}
Interpolating estimates \eqref{Est:l2.} and \eqref{Est:l1} gives that for all $(\frac{1}{p}, \frac{1}{r}) \in \mathcal{S}(d)$
\begin{align}\label{Est:-2}
\left \lVert \sup_{\Lambda \leq \lambda <2 \Lambda} T_{ \gamma_{a,q,\tau,\lambda}}  \right \rVert_{\ell^p \to \ell^{r^\prime}} \leq A q^{-2-\eta} \left[ \frac{\epsilon^2+\tau^2}{\epsilon^2} \right]^{d/4-1}   \Lambda^{d(1/r^\prime - 1/p)}. 
\end{align}
Substituting $\epsilon = \frac{1}{\Lambda^2}$ and $|\tau| \simeq \frac{2^k}{\Lambda^2}$ into \eqref{Est:-2} yields \eqref{Est:0.}.
\end{proof}
We now prove estimate \eqref{Est:Improving2} in the following result:
\begin{prop}\label{P:Imp-E}
Let $d \geq 5$ and $(\frac{1}{p}, \frac{1}{r}) \in \mathcal{S}(d)$. Then there is $A=A(d,p,r)$ such that for all $\Lambda \in 2^{\mathbb{N}}$
\begin{align}\label{Est:E0}
\left| \left| \sup_{\Lambda \leq \lambda < 2 \Lambda} |\mathscr{R}_\lambda |  \right| \right|_{\ell^p \to \ell^{r^\prime}}  \leq& A \Lambda^{d(1/r^\prime - 1/p)}. 
\end{align}

\end{prop}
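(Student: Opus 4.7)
The plan is to split $\mathscr{R}_\lambda = (\mathscr{A}_\lambda - \mathscr{B}_\lambda) + (\mathscr{B}_\lambda - \mathscr{C}_\lambda)$ and bound each summand separately using the two lemmas just established. The Farey decompositions $a_\lambda = \sum_{q=1}^\Lambda \sum_{a \in \mathbb{Z}^\times_q} a_\lambda^{a/q}$ together with the analogues for $b_\lambda$ and $c_\lambda$ let one read off from the definitions that
\begin{align*}
a_\lambda^{a/q}(\xi) - b_\lambda^{a/q}(\xi) = \frac{e^{2\pi} e^{-2\pi i \lambda^2 a/q}}{\lambda^{d-2}} \int_{I(a,q)} \frac{e^{-2\pi i \lambda^2 \tau}}{(2(\epsilon - i\tau))^{d/2}}\, \mu_{a/q,\tau,\lambda}(\xi)\, d\tau,
\end{align*}
since $\mu_{a/q,\tau,\lambda}$ is built from precisely the $(1-\Phi_q)$ factor that distinguishes $a_\lambda^{a/q}$ from $b_\lambda^{a/q}$. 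An analogous identity expresses $b_\lambda^{a/q} - c_\lambda^{a/q}$ as an integral of the same shape but over the complementary domain $\mathbb{R} \setminus I(a,q)$ and with $\mu$ replaced by $\gamma_{a/q,\tau,\lambda}$, since $b_\lambda^{a/q}$ and $c_\lambda^{a/q}$ differ only in the range of the $\tau$-integral defining $J_\lambda$ versus $I_\lambda$.

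Given these representations, I would use Minkowski's integral inequality to pass $\sup_{\Lambda \leq \lambda < 2\Lambda}$ inside each $\tau$-integral and then dyadically decompose the $\tau$-domain into the intervals $I_k(\Lambda) = \{\tau : (2^k-1)/\Lambda^2 \leq |\tau| \leq 2^k/\Lambda^2\}$ from Lemma \ref{L:Est-mu-Imp}. On $I_k(\Lambda)$ one has $|I_k(\Lambda)| \simeq 2^k/\Lambda^2$ and $|2(\epsilon - i\tau)|^{d/2} \gtrsim (2^k/\Lambda^2)^{d/2}$ uniformly in $\lambda \in [\Lambda, 2\Lambda]$, so the scalar weight $\lambda^{-(d-2)} |2(\epsilon - i \tau)|^{-d/2}$ has order $2^{-dk/2} \Lambda^{2}$. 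For the $\mu$-piece, the constraint $\tau \in I(a,q)$ confines the sum to $2^k \lesssim \Lambda/q$, and combining with Lemma \ref{L:Est-mu-Imp} produces a $k$-contribution of order
\begin{align*}
\frac{2^k}{\Lambda^2} \cdot 2^{-dk/2} \Lambda^2 \cdot 2^{dk/2}\, \Lambda^{-2-\eta} \Lambda^{d(1/r^\prime -1/p)} = 2^{k}\, \Lambda^{-2-\eta} \Lambda^{d(1/r^\prime -1/p)};
\end{align*}
summing over $2^k \lesssim \Lambda/q$, then over $a \in \mathbb{Z}^\times_q$ and $1 \leq q \leq \Lambda$, yields $\Lambda^{-\eta}\, \Lambda^{d(1/r^\prime - 1/p)}$. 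For the $\gamma$-piece the admissible range is $2^k \gtrsim \Lambda/q$, and Lemma \ref{L:Est-gamma-Imp} yields a $k$-contribution of order $2^{-k}\, q^{-2-\eta}\, \Lambda^{d(1/r^\prime -1/p)}$; the geometric tail in $k$ sums to $q^{-1-\eta}\, \Lambda^{-1}\, \Lambda^{d(1/r^\prime -1/p)}$, after which the Farey sum again produces $O(\Lambda^{-\eta}) \Lambda^{d(1/r^\prime -1/p)}$, provided $\eta$ is chosen small enough that $\sum_{q \leq \Lambda} q^{-\eta} \lesssim \Lambda^{1-\eta}$.

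The main obstacle is bookkeeping: one must align two geometric series in $k$ (below and above the threshold $2^k \simeq \Lambda/q$) against the exponents furnished by Lemmas \ref{L:Est-mu-Imp} and \ref{L:Est-gamma-Imp}, and verify that the integral identity above genuinely matches the $\tau$-integrand of $a_\lambda^{a/q} - b_\lambda^{a/q}$ with $\mu_{a/q,\tau,\lambda}$ after reorganizing the Poisson periodization on the torus. Once these points are settled the remainder is routine, since the $\eta > 0$ gains in both lemmas exceed by a full power of $\Lambda$ what is needed to absorb the Farey-sum loss.
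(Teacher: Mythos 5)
Your proposal is correct and follows essentially the same route as the paper: split $\mathscr{R}_\lambda = (\mathscr{A}_\lambda - \mathscr{B}_\lambda) + (\mathscr{B}_\lambda - \mathscr{C}_\lambda)$, express each piece as a $\tau$-integral of the multipliers $\mu_{a/q,\tau,\lambda}$ and $\gamma_{a/q,\tau,\lambda}$ weighted by $\lambda^{-(d-2)}(2(\epsilon-i\tau))^{-d/2}$, decompose $\tau$ dyadically over the scales $I_k(\Lambda)$ with $2^k \lesssim \Lambda/q$ and $2^k \gtrsim \Lambda/q$ respectively, invoke Lemmas \ref{L:Est-mu-Imp} and \ref{L:Est-gamma-Imp}, and sum the resulting geometric series in $k$, $a$, and $q$. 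The only minor cosmetic differences are that you track the $\Lambda^{-\eta}$ and $q^{-\eta}$ gains explicitly whereas the paper discards them at this stage (they are unnecessary for the improving estimate), and that what you call Minkowski's integral inequality is just the triangle inequality for the integral combined with pulling the supremum inside.
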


\begin{proof}
To verify \eqref{Est:E0}, it is enough to show
\begin{align}
 \left| \left| \sup_{\Lambda \leq \lambda < 2 \Lambda} |\mathscr{A}_\lambda - \mathscr{B}_\lambda |  \right| \right| _{\ell^p \to \ell^{r^\prime}}\leq& A \Lambda^{d(1/r^\prime - 1/p)} \label{Est:AB}\\
\left| \left| \sup_{\Lambda \leq \lambda < 2 \Lambda} |\mathscr{B}_\lambda - \mathscr{C}_\lambda |  \right| \right|_{\ell^p \to \ell^{r^\prime}} \leq& A \Lambda^{d(1/r^\prime - 1/p)} \label{Est:BC}.
\end{align}
 We begin the proof of \eqref{Est:AB} by observing from the definitions \eqref{Def:a(lambda)} and \eqref{Def:b(lambda)} that there is a constant $\mathfrak{C}>0$ such that for each
$a \in \mathbb{Z}^\times_q, 1 \leq  q \leq \Lambda$, and $\Lambda \in 2^{\mathbb{N}}$
\begin{align}
& \left \lVert \sup_{\Lambda \leq \lambda < 2 \Lambda} | \mathscr{A}^{a/q} _\lambda - \mathscr{B}^{a/q}_\lambda  |  \right \rVert _{\ell^p \to \ell^{r^\prime}} \nonumber \\ \leq &  A \Lambda^{2-d} \sum_{k=0}^{\log_2(\Lambda/Q) + \mathfrak{C}} \int_{I_k(\Lambda)}  \frac{ \left \lVert \sup_{\Lambda \leq \lambda < 2 \Lambda} | T_{ \mu_{a,q, \tau, \lambda}} |  \right \rVert_{\ell^p \to \ell^{r^\prime}} }{(\epsilon^2 + \tau)^{d/4}} d \tau.  \label{Est:TrianRl1}
\end{align}
By Lemma \ref{L:Est-mu-Imp}, the last line of the above display is majorized by 
\begin{align*}
&  A \Lambda^{-d+2} \sum_{k=0}^{\log_2(\Lambda/q)+\mathfrak{C}} \left[ \frac{2^k}{\Lambda^2} \right]^{1-d/2} 2^{dk/2} \Lambda^{-2} \left[\Lambda^{d(1/r^\prime- 1/p)} || f||_{\ell^p(\mathbb{Z}^d)} \right]
\\\leq& A \Lambda^{-d+2} \Lambda^{d-4} \frac{\Lambda}{q}  \left[\Lambda^{d(1/r^\prime- 1/p)} ||f||_{\ell^p(\mathbb{Z}^d)} \right] \\ \leq &  A \frac{1}{q \Lambda}  \left[ \Lambda^{d(1/r^\prime- 1/p)} ||f||_{\ell^p(\mathbb{Z}^d)} \right] . 
\end{align*}
Summing on $a \in \mathbb{Z}^\times_q $ and $ 1 \leq q \leq \Lambda$ then yields 
\begin{align*}
& \left| \left| \sup_{\Lambda \leq \lambda < 2 \Lambda} |(\mathscr{A}_\lambda - \mathscr{B}_\lambda) f| \right| \right|_{\ell^{r^\prime}(\mathbb{Z}^d)}\\  \leq& \sum_{q=1}^\Lambda \sum_{a \in \mathbb{Z}^\times_q} \left| \left| \sup_{\Lambda \leq \lambda < 2 \Lambda} |(\mathscr{A}^{a/q}_\lambda - \mathscr{B}^{a/q}_\lambda) f| \right| \right|_{\ell^{r^\prime}(\mathbb{Z}^d)} \\ \leq& A  \Lambda^{d(1/r^\prime-1/p)} || f||_{\ell^p(\mathbb{Z}^d)}. 
\end{align*}
It remains to handle the contribution from $\mathscr{B}_\lambda - \mathscr{C}_\lambda$. To this end, \eqref{Def:b(lambda)} and \eqref{Def:c(lambda)} ensure that there is a constant $\mathfrak{C}>0$ such that for each $a \in \mathbb{Z}^\times_q$, $1 \leq  q \leq \Lambda$, and $\Lambda \in 2^{\mathbb{N}}$
\begin{align*}
 & \left| \left|  \sup_{\Lambda \leq \lambda < 2 \Lambda} \left| ( \mathscr{B}_\lambda^{a/q} - \mathscr{C}_\lambda^{a/q}) f  \right| \right| \right|_{\ell^{r^{\prime}}(\mathbb{Z}^d)} \\ \leq&  A \Lambda^{-d+2} \sum_{k= \log_2(\Lambda/q)-\mathfrak{C}}^{\infty}  \int_{I_k(\Lambda)} \frac{ || T_{\gamma_{a,q,\tau,\lambda}} f  ||_{\ell^{r^\prime}(\mathbb{Z}^d)}}{(\epsilon^2 + \tau^2) ^{d/4} }d \tau 
 \end{align*}
 By Lemma \ref{L:Est-gamma-Imp}, the last line of the above display can be bounded by
 \begin{align*}
 & A\Lambda^{-d+2} \sum_{k= \log_2(\Lambda/q)-\mathfrak{C}}^{\infty}  \left[ \frac{2^k}{\Lambda^2}\right]^{1-d/2}  q^{-2}  2^{dk(1/2-2/d)} \left[ \Lambda^{d(1/r^\prime-1/p)}  || f||_{\ell^p(\mathbb{Z}^d)} \right]\\ \leq &  A \sum_{k= \log_2(\Lambda/q)-\mathfrak{C}}^{\infty}  2^{-k}   q^{-2}  \left[ \Lambda^{d(1/r^\prime-1/p)}  || f||_{\ell^p(\mathbb{Z}^d)} \right] \\ \leq&  \frac{A}{q \Lambda}   \left[ \Lambda^{d(1/r^\prime-1/p)}  || f||_{\ell^p(\mathbb{Z}^d)} \right]. 
\end{align*}
Summing on $a \in \mathbb{Z}^\times_q$ and $1 \leq q \leq \Lambda$ yields an upper bound $O\left(  \left[ \Lambda^{d(1/r^\prime-1/p)} || f||_{\ell^p(\mathbb{Z}^d)}\right]\right)$.

\end{proof}
  \begin{prop}\label{Cor:Imp-Diag}
 
For all $d \geq 5$ and $(\frac{1}{p}, \frac{1}{r}) \in \mathcal{R}(d)$, there is $A=A(d,p,r)$ such that for all $\Lambda \in 2^{\mathbb{N}}$
\begin{align*}
\left| \left| \sup_{\Lambda \leq \lambda < 2 \Lambda} |\mathscr{A}_\lambda | \right| \right|_{\ell^p \to \ell^{r^\prime}} \leq& A \Lambda^{d(1/r^\prime - 1/p)}. 
\end{align*}

  \end{prop}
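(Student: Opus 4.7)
The plan is to derive Proposition \ref{Cor:Imp-Diag} from the triangle inequality, Propositions \ref{P:Imp-M} and \ref{P:Imp-E}, a trivial endpoint bound at $R_{d,1}=(0,1)$, and Riesz-Thorin interpolation. First, since $\mathscr{A}_\lambda = \mathscr{C}_\lambda + \mathscr{R}_\lambda$, the triangle inequality combined with Propositions \ref{P:Imp-M} and \ref{P:Imp-E} immediately yields the desired bound \eqref{MainEst:Imp} at every $(1/p,1/r) \in \mathcal{S}(d)$. Second, the fact that each $\mathscr{A}_\lambda$ is an $\ell^1$-normalized average over a sphere gives the trivial estimate
\begin{align*}
\left\| \sup_{\Lambda \leq \lambda < 2\Lambda} |\mathscr{A}_\lambda f| \right\|_{\ell^\infty(\mathbb{Z}^d)} \leq \|f\|_{\ell^\infty(\mathbb{Z}^d)},
\end{align*}
which is precisely \eqref{MainEst:Imp} at $R_{d,1}$, where $d(1/r'-1/p)=0$.

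To promote these two families of bounds to the full interior region $\mathcal{R}(d)$, I would first verify the geometric fact that every $(1/p_0,1/r_0) \in \mathcal{R}(d)$ admits a decomposition
\begin{align*}
\left( \tfrac{1}{p_0}, \tfrac{1}{r_0} \right) = (1-\theta)(0,1) + \theta \left( \tfrac{1}{p_1}, \tfrac{1}{r_1} \right)
\end{align*}
for some $\theta \in (0,1]$ and some $(1/p_1, 1/r_1) \in \mathcal{S}(d)$. This holds because $S_{d,1} = (\tfrac{2}{d}, \tfrac{d-2}{d})$ lies on the diagonal edge $R_{d,1}R_{d,2}$ (both points satisfying $1/p+1/r=1$) and $\overline{\mathcal{S}(d)}$ shares the vertical edge $R_{d,2}R_{d,3}$ with $\overline{\mathcal{R}(d)}$, so that $\mathcal{R}(d) = \mathrm{int}\,\mathrm{conv}(\{(0,1)\} \cup \overline{\mathcal{S}(d)})$. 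The convex combination gives $1/p_0 = \theta/p_1$ and $1/r_0' = \theta/r_1'$, so $1/r_0' - 1/p_0 = \theta(1/r_1' - 1/p_1)$, which means the interpolated constant takes the required form $\Lambda^{d(1/r_0' - 1/p_0)}$.

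The final point is that Riesz-Thorin applies to linear operators, whereas the object of study is a sublinear supremum. I would dispatch this in the standard way by linearizing: for every measurable selection $\lambda(\cdot) : \mathbb{Z}^d \to [\Lambda, 2\Lambda) \cap \tilde{\Lambda}$, the operator $T^{\lambda(\cdot)} f(x) := \mathscr{A}_{\lambda(x)} f(x)$ is linear in $f$, and the $\ell^p \to \ell^{r'}$ norm of the maximal operator equals the supremum of $\|T^{\lambda(\cdot)}\|_{\ell^p \to \ell^{r'}}$ over all such $\lambda(\cdot)$. The two endpoint estimates above hold uniformly for every $T^{\lambda(\cdot)}$, so classical Riesz-Thorin yields the intermediate bound for each choice, and taking the supremum over $\lambda(\cdot)$ returns the claimed estimate for the maximal operator. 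No step poses a serious obstacle; the entire argument is simply a combination of the improving estimates already established for $\mathscr{C}_\lambda$ and $\mathscr{R}_\lambda$ with standard interpolation machinery.
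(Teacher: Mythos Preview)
Your proposal is correct and follows essentially the same approach as the paper: combine Propositions \ref{P:Imp-M} and \ref{P:Imp-E} to obtain \eqref{Est:Diag} on $\mathcal{S}(d)$, then interpolate with the trivial $\ell^\infty\to\ell^\infty$ bound at $(0,1)$ to cover all of $\mathcal{R}(d)$. Your additional details (the geometric verification that $\mathcal{R}(d)=\mathrm{int}\,\mathrm{conv}(\{(0,1)\}\cup\overline{\mathcal{S}(d)})$ and the linearization of the supremum to justify Riesz--Thorin) are standard and correctly executed, and simply make explicit what the paper leaves implicit.
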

  \begin{proof}
  By Propositions \ref{P:Imp-M} and \ref{P:Imp-E}, it follows that for all $d \geq 5, (\frac{1}{p}, \frac{1}{r}) \in \mathcal{
  S}(d)$ there is $A=A(d,p,r)$ such that for all $\Lambda \in 2^{\mathbb{N}}$ 
\begin{align}\label{Est:Diag}
\left| \left| \sup_{\Lambda \leq \lambda < 2 \Lambda} |\mathscr{A}_\lambda | \right| \right|_{\ell^p \to \ell^{r^\prime}} \leq& A \Lambda^{d(1/r^\prime - 1/p)}. 
\end{align}
Interpolating estimate \eqref{Est:Diag} with the trivial $\ell^\infty \to \ell^\infty$ bound for $\sup_{\Lambda \leq \lambda < 2 \Lambda} |\mathscr{A}_\lambda|$ yields the Proposition. 
  \end{proof}

\section{Sparse Domination for $\sup_\lambda |\mathscr{C}_\lambda|$}
Our goal in this section is to prove estimate \eqref{Est:MS}, where $\mathscr{C}_\lambda : f \mapsto f * \check{c}_\lambda$ and $c_\lambda$ is given in \eqref{Def:c(lambda)}. To this end, we need to state a restricted weak-type sparse result, which first appears in \cite{Kesler10}. We include an original, self-contained proof for convenience.

\begin{theorem}\label{Restr-Sparse}
Let $T$ be an operator on $\mathbb{Z}^d$ satisfying the property that for some $p,r: \frac{1}{p} + \frac{1}{r} >1$  there is an $A$ such that for all finite sets $E_1, E_2 \subset \mathbb{Z}^d$ and $|f| \leq 1_{E_1}, |g| \leq 1_{E_2}$, there is a sparse collection $\mathcal{S}$ such that 
\begin{align*}
    \left| \langle T f, g\rangle \right| \leq A \Lambda_{\mathcal{S}, p,r} (1_{E_1}, 1_{E_2}).
\end{align*} Then for every $\tilde{p}> p, \tilde{r}>r$ such that $\frac{1}{\tilde{p}} + \frac{1}{\tilde{r}} >1$ there is $A$ such that for all finitely supported $f,g : \mathbb{Z}^d \to \mathbb{C}$ there is a sparse collection $\mathcal{S}$ such that
\begin{align*}
    \left| \langle T f , g \rangle \right| \leq A \Lambda_{\mathcal{S}, \tilde{p},\tilde{r}} (f, g). 
\end{align*} 
\end{theorem}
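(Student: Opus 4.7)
The plan is to upgrade the hypothesis by a layer-cake decomposition of $(f,g)$, an absorption estimate that trades the geometric factors from the level sets for the inflated exponents $(\tilde p,\tilde r)$, and a Calder\'on--Zygmund stopping-time extraction producing a single sparse collection adapted to the pair $(f,g)$. First I would write $f=\sum_{j\in\mathbb{Z}} f_j$ and $g=\sum_{k\in\mathbb{Z}} g_k$ with $f_j=f\,1_{F_j}$, $g_k=g\,1_{G_k}$ on the dyadic level sets $F_j:=\{2^j<|f|\le 2^{j+1}\}$ and $G_k:=\{2^k<|g|\le 2^{k+1}\}$; only finitely many such sets are nonempty since $f,g$ are finitely supported. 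Since $|2^{-j-1}f_j|\le 1_{F_j}$ and $|2^{-k-1}g_k|\le 1_{G_k}$, the hypothesis applied to each normalized pair produces a sparse collection $\mathcal{S}_{j,k}$ with
\begin{align*}
|\langle Tf_j,g_k\rangle|\le 4A\cdot 2^{j+k}\,\Lambda_{\mathcal{S}_{j,k},p,r}(1_{F_j},1_{G_k}),
\end{align*}
so by bilinearity $|\langle Tf,g\rangle|\le 4A\sum_{Q} c_Q\,|Q|$, where $Q$ ranges over $\bigcup_{j,k}\mathcal{S}_{j,k}$ and
\begin{align*}
c_Q := \sum_{(j,k):Q\in\mathcal{S}_{j,k}} 2^{j+k}\,\langle 1_{F_j}\rangle_{Q,p}\langle 1_{G_k}\rangle_{Q,r}.
\end{align*}

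The second step is the cube-level absorption: for every cube $Q$,
\begin{align*}
\sum_{j\in\mathbb{Z}} 2^j\langle 1_{F_j}\rangle_{Q,p}\le C_{p,\tilde p}\langle f\rangle_{Q,\tilde p},
\end{align*}
with an analogous bound for $g$ using $r<\tilde r$. This is obtained by splitting at $2^j\sim\langle f\rangle_{Q,\tilde p}$: for $2^j\le\langle f\rangle_{Q,\tilde p}$ the trivial bound $\langle 1_{F_j}\rangle_{Q,p}\le 1$ together with summing the geometric series yields a contribution $\lesssim\langle f\rangle_{Q,\tilde p}$; for $2^j>\langle f\rangle_{Q,\tilde p}$ the Chebyshev-type estimate $|F_j\cap Q|\le 2^{-j\tilde p}\sum_{Q}|f|^{\tilde p}$ gives $\langle 1_{F_j}\rangle_{Q,p}\le 2^{-j\tilde p/p}\langle f\rangle_{Q,\tilde p}^{\tilde p/p}$, and the resulting series in $j$ converges precisely because $\tilde p>p$, contributing another $\lesssim\langle f\rangle_{Q,\tilde p}$. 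Multiplying the $f$- and $g$-absorption bounds decouples the double sum in $c_Q$ and gives $c_Q\le C\langle f\rangle_{Q,\tilde p}\langle g\rangle_{Q,\tilde r}$.

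The final and hardest step is replacing $\bigcup_{j,k}\mathcal{S}_{j,k}$, which is not a priori sparse, by a single sparse $\mathcal{S}$ adapted to $(f,g)$. After standardizing to a common dyadic grid by a $3^d$-shifted-grid argument (losing only an $O(1)$ factor), I would build $\mathcal{S}$ by a Calder\'on--Zygmund stopping time: starting from a cube containing the joint support of $f,g$, recursively select the maximal dyadic subcubes $Q\subsetneq P$ (with $P$ the current $\mathcal{S}$-parent) on which either $\langle f\rangle_{Q,\tilde p}>K\langle f\rangle_{P,\tilde p}$ or $\langle g\rangle_{Q,\tilde r}>K\langle g\rangle_{P,\tilde r}$, for $K=K(\tilde p,\tilde r)$ large. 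Sparseness of $\mathcal{S}$ is the standard Carleson packing. For each $Q\in\bigcup_{j,k}\mathcal{S}_{j,k}$ contained in the stopping interval with top $\pi(Q)\in\mathcal{S}$, one has $\langle f\rangle_{Q,\tilde p}\le K\langle f\rangle_{\pi(Q),\tilde p}$ and $\langle g\rangle_{Q,\tilde r}\le K\langle g\rangle_{\pi(Q),\tilde r}$; combining this with $c_Q\le C\langle f\rangle_{Q,\tilde p}\langle g\rangle_{Q,\tilde r}$ and the Carleson packing $\sum_{Q\subset\pi}|Q|\lesssim|\pi|$ within each stopping interval yields
\begin{align*}
\sum_{Q} c_Q\,|Q|\le C\sum_{P\in\mathcal{S}}\langle f\rangle_{P,\tilde p}\langle g\rangle_{P,\tilde r}|P| \;=\; C\,\Lambda_{\mathcal{S},\tilde p,\tilde r}(f,g).
\end{align*}
The principal obstacle is precisely this reorganization: the families $\mathcal{S}_{j,k}$ are each sparse but their union need not be, so merely invoking sparseness term by term is insufficient, and the Carleson packing bound for the union inside each stopping interval has to be extracted from the stopping-time geometry together with a strong-type bound for the bisublinear dyadic strong maximal operator at exponents $(\tilde p,\tilde r)$, which is available exactly because $\frac{1}{\tilde p}+\frac{1}{\tilde r}>1$.
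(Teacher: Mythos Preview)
Your overall strategy—layer-cake decomposition, an absorption inequality trading level-set weights for inflated-exponent averages, and a Calder\'on--Zygmund stopping time to produce a single sparse family—matches the paper's approach, and your absorption estimate $\sum_j 2^j\langle 1_{F_j}\rangle_{Q,p}\le C_{p,\tilde p}\langle f\rangle_{Q,\tilde p}$ is correct and is essentially the paper's density-level bookkeeping in cleaner form.

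The gap is in the \emph{order of operations} in the final step. Once you replace $c_Q$ by $C\langle f\rangle_{Q,\tilde p}\langle g\rangle_{Q,\tilde r}$ you have discarded the index $(j,k)$, and you are left needing $\sum_{Q:\pi(Q)=P}|Q|\lesssim|P|$ where $Q$ ranges over the union $\bigcup_{j,k}\mathcal{S}_{j,k}$. This packing is simply false: distinct sparse families can together exhaust all dyadic subcubes of $P$, and no maximal-operator bound repairs this, since the bilinear form $\sum_{Q\subset P}\langle f\rangle_{Q,\tilde p}\langle g\rangle_{Q,\tilde r}|Q|$ over \emph{all} dyadic $Q\subset P$ diverges already for $f=1_P$, $g=1_P$. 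The hypothesis $\tfrac{1}{\tilde p}+\tfrac{1}{\tilde r}>1$ does not help here.

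The fix is to postpone the absorption until after the stopping-time assignment and after using the sparseness of each $\mathcal{S}_{j,k}$ \emph{individually}. Concretely: for $Q\in\mathcal{S}_{j,k}$ with $\pi(Q)=P$, the stopping bound $\langle f\rangle_{Q,\tilde p}\le K\langle f\rangle_{P,\tilde p}$ combined with $2^j\,(|F_j\cap Q|/|Q|)^{1/\tilde p}\le\langle f\rangle_{Q,\tilde p}$ gives
\[
\langle 1_{F_j}\rangle_{Q,p}\le\min\Bigl\{1,\bigl(K\langle f\rangle_{P,\tilde p}/2^j\bigr)^{\tilde p/p}\Bigr\},
\]
a bound depending only on $P$ and $j$. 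Pull this out, then apply $\sum_{Q\in\mathcal{S}_{j,k},\,\pi(Q)=P}|Q|\lesssim|P|$ (valid for each fixed $(j,k)$), and only then sum over $(j,k)$ using your absorption argument at the level of $P$. This yields $\sum_P\langle f\rangle_{P,\tilde p}\langle g\rangle_{P,\tilde r}|P|$ as desired. The paper executes exactly this reordering, with the extra density parameters $\mu_1,\mu_2$ playing the role of your Chebyshev split; the gain $2^{-\mu_1(1/p-1/\bar p)}$ summable in $\mu_1$ is precisely the convergence of your geometric series in $j$ for $\tilde p>p$.
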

The assumption of Theorem \ref{Restr-Sparse} is referred to as a restricted weak-type sparse bound on $T$. The conclusion allows us to upgrade the restricted weak-type bound to a standard sparse bound, at the cost of raising the averaging exponents $p,r$ by an arbitrarily small amount. 
\begin{proof}
Fix $f,g : \mathbb{Z}^d \to \mathbb{C}$ supported on a cube $3E$ where $E$ is dyadic. Without loss of generality, suppose $|f| , |g| \leq 1$ and decompose 
\begin{align*}
 f = \sum_{k \geq 0} 2^{-k} f_k ,~~~ g = \sum_{l \geq 0} 2^{-l} g_l 
 \end{align*}
where $f_k = 2^k f 1_{\{2^{-{k+1}} < |f|  \leq 2^{-k} \}},~~~ g_l = 2^l g 1_{\{2^{-{l+1}} < |g|  \leq 2^{-l} \}} $. Then by assumption 
\begin{align*}
\left| \langle Tf ,g \rangle \right| \leq& \sum_{k, l \geq 0} 2^{-k-l} \left \langle \left| T 1_{\{2^{-{k+1}} < f \leq 2^{-k}\}} \right| , 1_{\{2^{-{l+1}} < g \leq 2^{-l}\}} \right \rangle \\ \leq& A \sum_{k, l \geq 0} 2^{-k-l} \Lambda_{\mathcal{S}_{k,l}, p,r}(1_{\{2^{-{k+1}} < f \leq 2^{-k}\}}, 1_{\{2^{-{l+1}} < g \leq 2^{-l}\}} ). 
\end{align*}
For $\mu_1, \mu_2 \geq 0$, let $\mathcal{Q}_{\mu_1, \mu_2}(k,l) := \mathcal{Q}^1_{\mu_1}(k,l) \cap \mathcal{Q}^2_{\mu_2}(k,l)$, where 
\begin{align*}
\mathcal{Q}^1_{\mu_1}(k,l):=& \left\{ Q \in \mathcal{S}_{k,l}: 2^{-{\mu_1 - 1}} < \frac{ |Q \cap \{ 2^{-{k+1}} < |f| \leq 2^{-k}\} |}{|Q|} \leq 2^{-\mu_1} \right\} \\
\mathcal{Q}^2_{\mu_2}(k,l) :=&  \left\{ Q \in \mathcal{S}_{k,l}:  2^{-{\mu_2 - 1}} < \frac{ |Q \cap \{ 2^{-{l+1}} < |f| \leq 2^{-l}\} |}{|Q|} \leq 2^{-\mu_2} \right\}.
\end{align*}
It suffices to produce a sparse collection $\mathcal{S}(f,g)$ such that for every $\mu_1 , \mu_2 \geq 0$ and $\tilde{p} > p, \tilde{r} >r$
\begin{align*}
 \sum_{Q \in \mathcal{Q}_{\mu_1, \mu_2}} \langle f \rangle_{Q,\bar{p}} \langle g \rangle_{Q,\bar{r}} |Q|  \leq  A \sum_{Q \in \mathcal{S}(f,g)} \langle f \rangle_{Q,\bar{p}} \langle g \rangle_{Q, \bar{r}} |Q|. 
\end{align*}
The first generation is denote by $\mathcal{S}_1(f,g)$ and is set equal to the maximal shifted dyadic cubes $Q \subset 3E$ such that 
\begin{align*} \langle f \rangle_{Q, \bar{p}} \geq A_0 \langle f \rangle_{3E,\bar{p}}~or~ \langle g \rangle_{Q, \bar{r}} \geq A_0 \langle g \rangle_{3E,\bar{r}}.
\end{align*}
For large enough constant $A_0$, $\left| \bigcup_{\mathcal{S}(f,g)} Q \right| \leq \frac{|E|}{100}$. For each $Q \in \mathcal{S}_1(f,g)$, we choose $R \in \mathcal{S}_2(f,g)$ provided it is a maximal shifted dyadic cube inside $Q$ such that
\begin{align*}
 \langle f \rangle_{R, \bar{p}} \geq A_0 \langle f \rangle_{Q,\bar{p}}~or~ \langle g \rangle_{R, \bar{r}} \geq A_0 \langle g \rangle_{Q,\bar{r}}.
\end{align*} 
For large enough constant $A_0$, $\left| \bigcup_{R \in \mathcal{S}_2(f,g) \subset Q} R \right| \leq \frac{|Q|}{100}$ for all $Q \in \mathcal{S}_1(f,g)$. Iterating this procedure a finite number of times yields the desired sparse collection of cubes $\mathcal{S}(f,g) = \{3E\} \cup \bigcup_{m=1}^{k_0(E)} \mathcal{S}_m(f,g)$.   
Next, we may suppose without loss of generality that the cubes $\mathcal{Q}_{\mu_1, \mu_2}$ are dyadic and set for each $m \geq 1$
\begin{align*}
 \mathcal{Q}_{\mu_1, \mu_2, m}= \{Q \in \mathcal{Q}_{\mu_1, \mu_2} : \min \{ l : \exists R \in \mathcal{S}_l(f,g): R \supset Q\} = m\}.
 \end{align*}
If there is no $R \in \mathcal{S}(f,g)$ for which $R \supset Q$, then assign $Q \in \mathcal{Q}_{\mu_1, \mu_2,0}$. By construction, 
\begin{flalign*}
& \sum_{Q \in \mathcal{Q}_{\mu_1, \mu_2}} \langle f \rangle_{Q,\bar{p}} \langle g \rangle_{Q,\bar{r}} |Q| & \\ =& \sum_{m=0}^{k_0} \sum_{Q \in \mathcal{Q}_{\mu_1, \mu_2,m}} \langle f \rangle_{Q,\bar{p}} \langle f \rangle_{Q,\bar{r}} |Q| & \\ \leq& A  2^{-\mu_1 /\bar{p}} 2^{-\mu_2 / \bar{r}} \sum_{m=0}^{k_0}\sum_{R \in \mathcal{S}_m(f,g)}   ~~~\sum_{\substack{ k,l \geq 0 \\ \mathcal{Q}^1_{\mu_1}(k,l) \cap \mathcal{Q}^2_{\mu_2}(k,l) \cap \mathcal{Q}_{\mu_1,\mu_2, m} \not = \emptyset}} 2^{-k} 2^{-l} \sum_{\substack{ Q \subset R  \\ Q  \in \mathcal{Q}^1_{\mu_1}(k,l) \cap \mathcal{Q}^2_{\mu_2}(k,l)}} |Q|.&
\end{flalign*}
Note that because $\mathcal{S}_{k,l}$ is a sparse collection for each $k,l \geq 0$, 
\begin{align*} 
\sum_{\substack{ Q \subset R \\ Q  \in \mathcal{Q}^1_{\mu_1}(k,l) \cap \mathcal{Q}^2_{\mu_2}(k,l)}} |Q| \leq A |R|.
\end{align*}
If $\mathcal{Q}^1_{\mu_1}(k,l) \cap \mathcal{Q}^2_{\mu_2}(k,l) \cap \mathcal{Q}_{\mu_1,\mu_2, m} \not = \emptyset$ for some $m \geq 1$, then any cube $Q \in \mathcal{Q}^1_{\mu_1}(k,l) \cap \mathcal{Q}^2_{\mu_2}(k,l) \cap \mathcal{Q}_{\mu_1,\mu_2, m}$ such that $Q \subset R$ for $R \in \mathcal{S}_m (f,g)$ satisfies
\begin{align*}
2^{-k} 2^{-\mu_1/ \bar{p}} \leq& A \langle f_k \rangle_{Q,\bar{p}} \leq A \langle f \rangle_{Q, \bar{p}} \leq A \langle f \rangle_{R, \bar{p}} \\
2^{-l} 2^{-\mu_2/ \bar{r}}  \leq& A \langle g_l \rangle_{Q, \bar{r}} \leq A \langle g \rangle_{Q, \bar{r}} \leq A \langle g \rangle_{R, \bar{r}}.
\end{align*}
If $\mathcal{Q}^1_{\mu_1}(k,l) \cap \mathcal{Q}^2_{\mu_2}(k,l) \cap \mathcal{Q}_{\mu_1,\mu_2, 0} \not = \emptyset$, then any cube $Q \in \mathcal{Q}^1_{\mu_1}(k,l) \cap \mathcal{Q}^2_{\mu_2}(k,l) \cap \mathcal{Q}_{\mu_1,\mu_2, 0}$ satisfies
\begin{align*}
2^{-k} 2^{-\mu_1/ \bar{p}} \leq& A \langle f_k \rangle_{Q,\bar{p}} \leq A \langle f \rangle_{Q, \bar{p}} \leq A \langle f \rangle_{3E, \bar{p}} \\
2^{-l} 2^{-\mu_2/ \bar{r}}  \leq& A \langle g_l \rangle_{Q, \bar{r}} \leq A \langle g \rangle_{Q, \bar{r}} \leq A \langle g \rangle_{3E, \bar{r}}.
\end{align*}
Therefore, 
\begin{align*}
& \sum_{Q \in \mathcal{Q}_{\mu_1, \mu_2}} \langle f \rangle_{Q,\bar{p}} \langle g \rangle_{Q,\bar{r}} |Q| \\ \leq & A 2^{-\mu_1 /\bar{p}} 2^{-\mu_2 / \bar{r}} \sum_{m=0}^{k_0}\sum_{R \in \mathcal{S}_m(f,g)} \sum_{\substack{ k,l \geq 0 \\ 2^{-k} \leq A 2^{\mu_1/ \bar{p}} \\ 2^{-l} \leq A 2^{\mu_2 / \bar{r}}}}  2^{-k} 2^{-l} \left[ \sum_{\substack{ Q \subset R \\ Q  \in \mathcal{Q}_{\mu_1, \mu_2,m}}} |Q| \right] \\ \leq & A \sum_{R \in \mathcal{S}(f,g)} \langle f \rangle_{R, \bar{p}} \langle g \rangle_{R, \bar{r}} |R|. 
\end{align*}

\end{proof}
We now restate estimate \eqref{Est:MS} as a stand-alone result and then prove it.

\begin{theorem}\label{Thm:Sp-M}
Let $d \geq 5$ and $\left( \frac{1}{p}, \frac{1}{r} \right) \in \mathcal{S}(d)$. Then 
\begin{align*}
\left|\left| \sup_{\lambda \in \tilde{\Lambda}} | \mathscr{C}_\lambda| : (p,r) \right| \right| <\infty. 
\end{align*}

\end{theorem}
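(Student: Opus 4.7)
The strategy is to establish a restricted weak-type sparse bound for $\sup_{\lambda \in \tilde{\Lambda}} |\mathscr{C}_\lambda|$ at each $(1/p,1/r) \in \mathcal{S}(d)$, then invoke Theorem \ref{Restr-Sparse} to upgrade it to the full sparse bound. Accordingly, I fix finite sets $F,G \subset \mathbb{Z}^d$ with $|f|\leq 1_F$, $|g|\leq 1_G$, and a large dyadic cube $Q_0$ containing $F\cup G$. After linearizing the supremum via a measurable selection $\lambda(\cdot)\in\tilde{\Lambda}$, the goal becomes to produce a sparse $\mathcal{S}$ with $|\langle \mathscr{C}_{\lambda(\cdot)} f, g\rangle| \leq A\,\Lambda_{\mathcal{S},p,r}(1_F,1_G)$.

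The sparse collection is built by the standard Calder\'on--Zygmund recursion. With $Q_0 \in \mathcal{S}$, its stopping children are the maximal dyadic subcubes $Q' \subsetneq Q_0$ for which either $\langle 1_F\rangle_{Q',p}\geq C_0\langle 1_F\rangle_{Q_0,p}$ or $\langle 1_G\rangle_{Q',r}\geq C_0\langle 1_G\rangle_{Q_0,r}$; for $C_0$ large, the union of children covers at most $|Q_0|/2$, and iterating on each child produces $\mathcal{S}$. Setting $E_Q := Q\setminus\bigcup\{\text{children of } Q\text{ in }\mathcal{S}\}$, the principal estimate to verify at each $Q\in\mathcal{S}$ is
\[
\Bigl\langle \sup_{\lambda \in \tilde{\Lambda}} \bigl|\mathscr{C}_\lambda(f\,1_Q)\bigr|,~ g\,1_{E_Q}\Bigr\rangle
\;\leq\; A\,\langle 1_F\rangle_{Q,p}\,\langle 1_G\rangle_{Q,r}\,|Q|,
\]
since summing over $\mathcal{S}$ and telescoping away the children contributions delivers the restricted weak-type sparse bound. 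To prove the principal estimate, I cut the supremum dyadically as $\sup_{\Lambda \in 2^{\mathbb{N}}}\sup_{\Lambda \leq \lambda < 2\Lambda}$. For the on-scale piece $\Lambda \sim \ell(Q)$, Proposition \ref{P:Imp-M} gives an $\ell^p \to \ell^{r'}$ bound with constant $\Lambda^{d(1/r'-1/p)}$; a H\"older pairing with $g\,1_{E_Q}$ and the identity $\Lambda^d = |Q|$ collapse the scaling factors to exactly $|Q|$, producing the right-hand side. Scales $\Lambda \ll \ell(Q)$ are handled by the same improving bound; since $d(1/r'-1/p)<0$ on the interior of $\mathcal{S}(d)$, these sum as a convergent geometric series.

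The main obstacle is the tail $\Lambda \gg \ell(Q)$, for which $\mathscr{C}_\lambda$ cannot be localized to $Q$ by scale alone. Here one must exploit the rapid polynomial decay of $\check c_\lambda$ off the sphere of radius $\lambda$, captured in the pointwise kernel bound \eqref{Est:Ker11}, together with the $\ell^p \to \ell^p$ estimate \eqref{Est:Int1} for the low-frequency pieces $\mathscr{C}^{a/q}_\lambda P_{\leq 1/\Lambda}$; the Gauss sum bound \eqref{Est:GS} is what controls the summation over the arithmetic parameters $q\leq \Lambda$ and $a\in \mathbb{Z}^\times_q$, and the lower bound $1/p \leq (d-2)/d$ built into $\mathcal{S}(d)$ is precisely what makes the resulting sum $\sum_q q^{1-d(1-1/p)+\delta}$ converge. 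Combining these ingredients shows the tail contribution is geometrically smaller than the on-scale term, completing the principal estimate; Theorem \ref{Restr-Sparse} then promotes the restricted weak-type sparse bound to the full $(p,r)$-sparse bound claimed in Theorem \ref{Thm:Sp-M}.
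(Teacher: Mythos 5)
Your overall strategy---establish a restricted weak-type sparse bound and then upgrade it via Theorem \ref{Restr-Sparse}---matches the paper, and the observations about the role of the Gauss sum decay and the endpoint $(\tfrac{d-2}{d},\tfrac{d-2}{d})$ are on target. However, the central ``principal estimate + telescoping'' step has a genuine gap.

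You propose to verify, at each sparse cube $Q$ with gap set $E_Q$,
\begin{align*}
\left\langle \sup_{\lambda} \left|\mathscr{C}_\lambda(f 1_Q)\right|,\, g 1_{E_Q}\right\rangle \leq A\,\langle 1_F\rangle_{Q,p}\,\langle 1_G\rangle_{Q,r}\,|Q|
\end{align*}
and then sum over $\mathcal{S}$. But this sum does \emph{not} majorize $\left\langle \sup_\lambda |\mathscr{C}_\lambda f|, g\right\rangle$: on $E_Q$, for scales $\Lambda > \ell(Q)$, the operator $\mathscr{C}_\lambda$ sees all of $f$, not just $f 1_Q$, and the kernel $\check{c}_\lambda$ is spread over a ball of radius $\lambda \gg \ell(Q)$. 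Replacing $f$ by $f 1_Q$ there drops a contribution that is in no sense small, so the decomposition loses the original bilinear form before it even begins. You flag the tail $\Lambda \gg \ell(Q)$ as ``the main obstacle,'' but the fix you sketch---kernel decay plus \eqref{Est:Int1} and Gauss-sum convergence---addresses bounding $\mathscr{C}_\lambda(f 1_Q)$ at large scales, which is not the missing piece; the missing piece is $\mathscr{C}_\lambda(f 1_{Q^c})$ evaluated on $E_Q$.

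The paper resolves this by (i) adding a stopping condition on the operator itself, namely requiring $\bigl\langle \sup_{\Lambda}\sup_{\lambda}|\mathscr{C}^{a/q}_\lambda P_{\leq 1/\Lambda} f| \bigr\rangle_{Q,p}$ to be large relative to $\langle f\rangle_{3E,p}$, which gives the pointwise control $1_{(\bigcup_Q Q)^c}\sup_\Lambda\sup_\lambda |\mathscr{C}^{a/q}_\lambda P_{\leq 1/\Lambda} f| \leq A q^{-2-\eta}\langle f\rangle_{3E,p}$ for the complement of the stopping region (term $IV$), and (ii) explicitly isolating the off-support contributions $\mathscr{C}^{a/q}_\lambda P_{\leq 1/\Lambda}(1_{(3Q)^c}f)$ (term $II_Q$) and the large-scale contributions with the \emph{full} $f$ (term $III_Q$), handling them via the interplay of the kernel bound \eqref{Est:Ker11} and the $\ell^2$ estimate \eqref{Est:maxl20}. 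Your CZ stopping, which looks only at averages of $1_F$ and $1_G$, does not give the pointwise control needed for the analogue of term $IV$, and your principal estimate never creates the analogues of $II_Q$ and $III_Q$. Separately, the paper first splits $\mathscr{C}_\lambda$ through the projections $P_{\leq 1/\Lambda}$ and $P_{N/\Lambda}$ and proves sparse bounds with decay in $N$ and $q$ before summing; your proposal applies Proposition \ref{P:Imp-M} to the full $\mathscr{C}_\lambda$ and leaves it unclear how the required $q$-decay is threaded through the stopping-time recursion. As written, the proof does not close.
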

\begin{proof}
It suffices to prove Theorem  under the additional restriction $\frac{d}{d-2}< p \leq 2$. In particular, it is enough to prove the conclusion of Theorem \ref{Thm:Sp-M} for $(\frac{1}p, \frac{1}r)$ near $( \frac{d-2}d, \frac{d-2}d)$ because the result is strongest there. To proceed, we recall that for any $\# \in 2^{\mathbb{Z}}$, the operator $P_{\leq \#}$ is defined by
$$
\widehat{ P_{\leq \#}(f)}(\xi) = \sum_{\ell \in \mathbb{Z}^d / q \mathbb{Z}^d} \sum_{2^k \leq \#} \psi_k(\xi-\ell/q) \widetilde{\Phi}_q(\xi- \ell/q) \hat{f}(\xi) \qquad \forall \xi \in [-1/2, 1/2)^d
$$
where $\widetilde{\Phi}_q$ is given in \eqref{Exp:Decomp}. 
Then we obtain by the triangle inequality 
\begin{align*}
 \sup_{\Lambda \in 2^\mathbb{N}} \sup_{\Lambda \leq \lambda < 2 \Lambda} \left| \mathscr{C}_{\lambda}  f  \right|  \leq&   \sup_{\Lambda \in 2^\mathbb{N}} \sup_{\Lambda \leq \lambda < 2 \Lambda} \left| \mathscr{C}_{\lambda} P_{\leq  1/ \Lambda} f  \right|  \\ +& \sum_{  N \in 2^{\mathbb{N}}  } \sup_{\Lambda \geq Nq } \sup_{\Lambda \leq \lambda < 2 \Lambda} \left| \mathscr{C}_{\lambda}   P_{N/\Lambda} f  \right|.
\end{align*}
We first focus our attention on obtaining $\eta=\eta(d,p,r)>0$ such that for all $q \in \mathbb{N}, a \in \mathbb{Z}^\times_q$
\begin{align}\label{Est:centered}
 \left| \left|  \sup_{\Lambda} \sup_{\Lambda \leq \lambda < 2 \Lambda} \left| \mathscr{C}^{a/q}_{\lambda}   P_{\leq 1/\Lambda}  \right| : (p,r) \right| \right|\leq A q^{-2-\eta}.
\end{align}
By Theorem \ref{Restr-Sparse}, it suffices to obtain for all $(\frac{1}{p}, \frac{1}{r})$ satisfying $\max\{\frac{1}{p}, \frac{1}{q} \} < \frac{d-2}{d}$ and arbitrarily close to $(\frac{d-2}{d}, \frac{d-2}d)$ some $\eta = \eta(d,p,r)>0$ such that
\begin{align}\label{Est:Goal1} \left| \left|  \sup_{\Lambda} \sup_{\Lambda \leq \lambda < 2 \Lambda} \left| \mathscr{C}^{a/q}_{\lambda}   P_{\leq 1/\Lambda}  \right| : (p,r) \right| \right|_{restricted} \leq A q^{-2-\eta}
\end{align}
where the sparse restricted norm $||T:(p,q)||_{restricted}$ is defined to be the infimum over all $C>0$ such that $\forall f,g : \mathbb{Z}^d \to \mathbb{C}~s.t.~|f|\leq 1_{E_1}, |g| \leq 1_{E_2}, \max\{|E_1|, |E_2|\} < \infty$, the estimate $|\langle Tf , g \rangle| \leq C \sup_{\mathcal{S}} \Lambda_{\mathcal{S}, p,r} (1_{E_1}, 1_{E_2})$ holds. To this end, let $f,g:\mathbb{Z}^d \rightarrow \mathbb{C}$ be finitely supported on $3E$ where $E$ is a dyadic cube. Now let $\mathcal{Q}(E)$ be the maximal dyadic cubes satisfying the conditions 
  \begin{align*}
\langle 1_{E_1} \rangle_{3Q,1} \geq& A_0 \langle 1_{E_1} \rangle_{3E,1} \\ 
 \left\langle \sup_{\Lambda} \sup_{\Lambda \leq \lambda < 2 \Lambda} \left| \mathscr{C}^{a/q}_{\lambda}   P_{\leq 1/\Lambda} f\right| \right \rangle_{Q,p} \geq & A_0 q^{-2-\eta} \langle f \rangle_{3E,p}
\end{align*} 
so that $\left| \bigcup_{J \in \mathcal{Q}(E)} J \right| < \frac{|E|}{100}$ for a large enough constant $A_0$. We first majorize
\begin{align*}
& \left\langle \sup_{\Lambda} \sup_{\Lambda \leq \lambda < 2 \Lambda} \left| \mathscr{C}^{a/q}_{\lambda}   P_{\leq 1/\Lambda} f\right|, g \right \rangle\\  \leq & \sum_{Q \in \mathcal{Q}(E)} \left \langle  1_Q \sup_{\Lambda \leq \ell(Q)} \sup_{\Lambda \leq \lambda < 2 \Lambda} \left| \mathscr{C}^{a/q}_{\lambda}   P_{\leq 1/\Lambda} (1_{3Q}f)\right|, g \right \rangle \\ +&  \sum_{Q \in \mathcal{Q}(E)} \left \langle  1_Q \sup_{\Lambda \leq \ell(Q)} \sup_{\Lambda \leq \lambda < 2 \Lambda} \left| \mathscr{C}^{a/q}_{\lambda}   P_{\leq 1/\Lambda} (1_{(3Q)^c}f)\right|, g \right \rangle\\  +& \sum_{Q \in \mathcal{Q}(E)} \left \langle  1_Q \sup_{\Lambda> \ell(Q)} \sup_{\Lambda \leq \lambda < 2 \Lambda} \left|  \mathscr{C}^{a/q}_{\lambda}   P_{\leq 1/\Lambda} f\right|, g \right \rangle \\ +& \left \langle  1_{(\bigcup_{\mathcal{Q}(E)} Q)^c} \sup_{\Lambda} \sup_{\Lambda \leq \lambda < 2 \Lambda} \left|  \mathscr{C}^{a/q}_{\lambda}   P_{\leq 1/\Lambda} f\right|, g\right \rangle  \\ =& \sum_{Q \in \mathcal{Q}(E)} I_Q + \sum_{Q \in \mathcal{Q}(E)} II_Q + \sum_{Q \in \mathcal{Q}(E)} III_Q + IV
\end{align*}
and proceed to obtain satisfactory bounds for each of the above terms separately. First note the pointwise bound
$$1_{(\bigcup_{\mathcal{Q}(E)} Q)^c} \sup_{\Lambda} \sup_{\Lambda \leq \lambda < 2 \Lambda} \left|  \mathscr{C}^{a/q}_{\lambda}   P_{\leq 1/\Lambda} f \right| \leq A q^{-2-\eta} \langle f \rangle_{3E,p}$$
by construction of the stopping time. Therefore, $IV \leq A q^{-2-\eta} \langle f \rangle_{3E,p} \langle g \rangle_{3E,1} |E|$. Next, we may observe from \eqref{Est:Ker11} and the stopping conditions  the pointwise bound
\begin{align}\label{Est:Aux18}
\sum_{Q \in \mathcal{Q}(E)}   1_Q \sup_{\Lambda> \ell(Q)} \sup_{\Lambda \leq \lambda < 2 \Lambda} \left|  \mathscr{C}^{a/q}_{\lambda}   P_{\leq 1/\Lambda} f\right| \leq A q^2 \langle 1_{E_1} \rangle_{3E,1}. 
\end{align}
From estimate \eqref{Est:maxl20}, it follows that
\begin{align}\label{Est:Aux19}
\left\langle \sum_{Q \in \mathcal{Q}(E)}   1_Q \sup_{\Lambda> \ell(Q)} \sup_{\Lambda \leq \lambda < 2 \Lambda} \left|  \mathscr{C}^{a/q}_{\lambda}   P_{\leq 1/\Lambda} f\right| \right \rangle_{3E,2}  \leq A q^{-d/2} \langle f \rangle_{3E,2}.
\end{align}
From \eqref{Est:Aux18} and \eqref{Est:Aux19}, we may observe
\begin{align}\label{Est:Aux20}
\left \langle \sum_{Q \in \mathcal{Q}(E)}   1_Q \sup_{\Lambda> \ell(Q)} \sup_{\Lambda \leq \lambda < 2 \Lambda} \left|  \mathscr{C}^{a/q}_{\lambda}   P_{\leq 1/\Lambda} f\right|\right \rangle_{3E,r^\prime} \leq A q^{-2-\eta} \langle 1_{E_1} \rangle_{3E,p}.
\end{align}
Estimate \eqref{Est:Aux20} combined with H\"{o}lder's inequality implies 
$$\sum_{Q \in \mathcal{Q}(E)} III_Q \leq A q^{-2-\eta} \langle 1_{E_1} \rangle_{3E,p} \langle g \rangle_{3E,r} |E|.$$ 
As we shall be able to recurse on $ \sum_{Q \in \mathcal{Q}(E)} I_Q$ by letting each $Q \in \mathcal{Q}(E)$ play the role that $E$ played in the initial stage, it suffices to obtain
\begin{align}\label{Est:Goal2}
\sum_{Q \in \mathcal{Q}(E)} II_Q \leq A q^{-2-\eta} \langle 1_{E_1} \rangle_{3E,p} \langle g \rangle_{3E,r} |E|.
\end{align}
To this end, we observe from the pointwise bound \eqref{Est:M1} and stopping conditions that 
\begin{align}\label{Est:Aux21}
\sum_{Q \in \mathcal{Q}(E)}  1_Q \sup_{\Lambda> \ell(Q)} \sup_{\Lambda \leq \lambda < 2 \Lambda} \left|  \mathscr{C}^{a/q}_{\lambda}   P_{\leq 1/\Lambda} (1_{(3Q)^c}f) \right| \leq A q^2 \langle 1_{E_1} \rangle_{3E,1}. 
\end{align}
Furthermore, estimate \eqref{Est:maxl20} ensures
\begin{align}\label{Est:Aux22}
& \left\langle \sum_{Q \in \mathcal{Q}(E)}   1_Q \sup_{\Lambda> \ell(Q)} \sup_{\Lambda \leq \lambda < 2 \Lambda} \left|  \mathscr{C}^{a/q}_{\lambda}   P_{\leq 1/\Lambda} (1_{(3Q)^c}f)\right| \right \rangle_{3E,2} \\ \leq & \left\langle \sum_{Q \in \mathcal{Q}(E)}   1_Q \sup_{\Lambda> \ell(Q)} \sup_{\Lambda \leq \lambda < 2 \Lambda} \left|  \mathscr{C}^{a/q}_{\lambda}   P_{\leq 1/\Lambda} (1_{3Q}f)\right| \right \rangle_{3E,2} \nonumber \\ +& \left\langle \sum_{Q \in \mathcal{Q}(E)}   1_Q \sup_{\Lambda> \ell(Q)} \sup_{\Lambda \leq \lambda < 2 \Lambda} \left|  \mathscr{C}^{a/q}_{\lambda}   P_{\leq 1/\Lambda} f\right| \right \rangle_{3E,2} \nonumber \\ \leq & A q^{-d/2} \langle 1_{E_1} \rangle_{3E,2} \nonumber .
\end{align}
From \eqref{Est:Aux21} and \eqref{Est:Aux22}, it follows that 
\begin{align}\label{Est:Aux23}
\left \langle \sum_{Q \in \mathcal{Q}(E)}   1_Q \sup_{\Lambda> \ell(Q)} \sup_{\Lambda \leq \lambda < 2 \Lambda} \left|  \mathscr{C}^{a/q}_{\lambda}   P_{\leq 1/\Lambda} f\right|\right \rangle_{3E,r^\prime} \leq A q^{-2-\eta} \langle 1_{E_1} \rangle_{3E,p}.
\end{align}
Estimate \eqref{Est:Aux23} combined with H\"{o}lder's inequality implies \eqref{Est:Goal2}. Recursing on $\sum_{Q \in \mathcal{Q}(E)} I_Q$ then yields \eqref{Est:Goal1}. 
That 
\begin{align}\label{Est:Goal3}
\left| \left|  \sup_{\Lambda} \sup_{\Lambda \leq \lambda < 2 \Lambda} \left| \mathscr{C}^{a/q}_{\lambda}   P_{N/\Lambda}  \right| : (p,r) \right| \right|_{restricted} \leq A N^{-\eta} q^{-2-\eta}
\end{align}
for all $N \in 2^{\mathbb{N}}, q \in \mathbb{N}$, and $(\frac{1}{p}, \frac{1}{r})$ satisfying $\max\{\frac{1}{p}, \frac{1}{q} \} < \frac{d-2}{d}$ and arbitrarily close to $(\frac{d-2}{d}, \frac{d-2}d)$ and some $\eta = \eta(d,p,r)>0$ follows a very similar argument, and so the details are omitted. Summing \eqref{Est:Goal2} on $a,q$ and \eqref{Est:Goal3} on $a,q$, and $N$ concludes the proof of Theorem \ref{Thm:Sp-M}. 
\end{proof}

\section{Sparse Domination for $\sup_\lambda | \mathscr{R}_\lambda|$}
Our goal is now to obtain estimate \eqref{Est:ES}, which is the sparse bound for $\sup_\lambda |\mathscr{R}_\lambda|$. We proceed by first proving

\begin{lemma}\label{L:Est-T}
For all $d \geq 5, q \in \mathbb{N}$, $a \in \mathbb{Z}^\times_q$, and $(\frac{1}{p}, \frac{1}{r}) \in \mathcal{S}(d)$, there exists $A=A(d,p,r)$ and $\eta=\eta(d,p,r)>0$ such that for all $k \geq 0, \Lambda \in 2^{\mathbb{N}}$, and $\tau \in I_k (\Lambda)$
\begin{align}
\left \lVert \sup_{\Lambda \leq \lambda < 2 \Lambda} |T_{\check{\mu}_{a/q,\tau,\lambda}}| : (p, r) \right \rVert \leq& A 2^{dk/2} \Lambda^{-2-\eta} \label{Est:mu} \\
\left \lVert \sup_{\Lambda \leq \lambda < 2 \Lambda} | T_{\check{\gamma}_{a/q, \tau,\lambda}}| : (p, r) \right \rVert \leq& A q^{-2-\eta}  2^{dk(1/2-2/d)} \label{Est:gamma}.
\end{align}
Here, as elsewhere, $T_m: f \mapsto f * \check{m}$ for a given symbol $m\in L^\infty(\mathbb{T}^d)$. 
\end{lemma}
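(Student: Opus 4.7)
The plan is to transfer the improving properties of Lemmas \ref{L:Est-mu-Imp} and \ref{L:Est-gamma-Imp} into sparse form, following the template of the proof of Theorem \ref{Thm:Sp-M}. The key simplification is that $\tau \in I_k(\Lambda)$ pins the integration variable to a single dyadic scale $\Lambda$, so $\sup_{\Lambda \leq \lambda < 2\Lambda}$ is effectively single-scale and, by the kernel estimates \eqref{Est:Ker3}--\eqref{Est:Ker4}, localized at scale $\Lambda$. Consequently, only one stopping generation per parent cube is needed, and no summation over a further dyadic parameter arises, unlike in the multiscale setting of \S 6.

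By Theorem \ref{Restr-Sparse}, it suffices to establish restricted weak-type analogues of \eqref{Est:mu} and \eqref{Est:gamma} for $(\tfrac{1}{p},\tfrac{1}{r}) \in \mathcal{S}(d)$ arbitrarily close to $(\tfrac{d-2}{d}, \tfrac{d-2}{d})$ with $\max\{1/p, 1/r\} < \tfrac{d-2}{d}$. Fix $|f| \leq 1_{E_1}$, $|g| \leq 1_{E_2}$ supported on $3E$ for a dyadic cube $E$, and let $\mathcal{Q}(E)$ denote the maximal dyadic subcubes $Q \subset 3E$ satisfying either $\langle 1_{E_1}\rangle_{3Q,1} \geq A_0 \langle 1_{E_1} \rangle_{3E,1}$ or
\begin{align*}
\left\langle \sup_{\Lambda \leq \lambda < 2 \Lambda} \bigl|T_{\mu_{a/q,\tau,\lambda}} 1_{E_1}\bigr| \right\rangle_{Q,p} \geq A_0 \cdot 2^{dk/2} \Lambda^{-2-\eta} \langle 1_{E_1} \rangle_{3E, p}
\end{align*}
(with $2^{dk/2}\Lambda^{-2-\eta}$ replaced by $q^{-2-\eta} 2^{dk(1/2-2/d)}$ in the $\gamma$ case), so that $|\bigcup_{Q \in \mathcal{Q}(E)} Q| \leq |E|/100$ for $A_0$ large enough.

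Next I would split $\langle \sup_{\Lambda \leq \lambda < 2\Lambda} |T_{\mu_{a/q,\tau,\lambda}}| f, g\rangle$ into four pieces exactly as in \eqref{Est:Aux20}--\eqref{Est:Aux23}: the outside-stopping-set contribution (controlled by the stopping condition combined with H\"older's inequality), the near-diagonal contribution $1_{3Q} f$ for $Q \in \mathcal{Q}(E)$ (which is recursed upon), the far-field contribution $1_{(3Q)^c} f$ (negligible by the rapid off-diagonal decay in \eqref{Est:Ker3}--\eqref{Est:Ker4}), and the coarse-scale tail. For the nontrivial pieces, interpolating the Lemma \ref{L:Bump}-type pointwise bound against the $\ell^2 \to \ell^2$ estimate \eqref{Est:l2} (resp.\ \eqref{Est:l2.}) reproduces the single-scale $\ell^p \to \ell^{r'}$ inequality of Lemma \ref{L:Est-mu-Imp} (resp.\ \ref{L:Est-gamma-Imp}); this converts to a sparse-type bound via the bookkeeping identity $|Q|^{1/p + 1/r} \cdot \Lambda^{d(1/r'-1/p)} = |Q|$ valid at scale $\Lambda$, yielding the target constants $2^{dk/2} \Lambda^{-2-\eta}$ (resp.\ $q^{-2-\eta} 2^{dk(1/2 - 2/d)}$). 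Recursing on the stopping cubes then produces the desired sparse collection.

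The principal obstacle is the careful accounting of the far-field contribution, where the polynomial decay in \eqref{Est:Ker3} and \eqref{Est:Ker4} must be summed against the geometric growth of the dyadic ancestors of the cubes in $\mathcal{Q}(E)$; this is standard but technically delicate, and is the source of the arbitrarily small loss $\delta$ in $\eta$. Once this is in place, the scaling factors $2^{dk/2} \Lambda^{-2-\eta}$ and $q^{-2-\eta} 2^{dk(1/2-2/d)}$ are inherited cleanly from Lemmas \ref{L:Est-mu-Imp} and \ref{L:Est-gamma-Imp}, so no new harmonic-analytic input is required, and Theorem \ref{Restr-Sparse} finally upgrades the restricted weak-type bound to \eqref{Est:mu} and \eqref{Est:gamma}.
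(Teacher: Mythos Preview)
Your proposal would work, but it is substantially more elaborate than what is needed, and the paper takes a much simpler route. Because $\Lambda$ is fixed and the kernels $\check{\mu}_{a/q,\tau,\lambda}$, $\check{\gamma}_{a/q,\tau,\lambda}$ are localized at scale $\Lambda$ (estimates \eqref{Est:Ker3}, \eqref{Est:Ker4}), the paper dispenses entirely with stopping times, recursion, and Theorem~\ref{Restr-Sparse}. Instead it tiles $\mathbb{Z}^d$ by the fixed grid $\mathcal{D}_\Lambda$ of dyadic cubes of side $\Lambda$, writes
\[
\Bigl\langle \sup_{\Lambda\le\lambda<2\Lambda}|f*\check{\mu}_{a/q,\tau,\lambda}|,\,g\Bigr\rangle
\le \sum_{Q\in\mathcal{D}_\Lambda}\Bigl[\underbrace{\langle\sup|\,(1_Qf)*\check{\mu}\,|,g1_Q\rangle}_{A_Q}
+\sum_{l\ge 1}\underbrace{\langle\sup|\,(1_{3^lQ\setminus 3^{l-1}Q}f)*\check{\mu}\,|,g1_Q\rangle}_{B_{Q,l}}\Bigr],
\]
bounds $A_Q$ directly by Lemma~\ref{L:Est-mu-Imp} as $A\,2^{dk/2}\Lambda^{-2-\eta}\langle f\rangle_{Q,p}\langle g\rangle_{Q,r}|Q|$, and controls $B_{Q,l}$ by interpolating the $\ell^2$ bound \eqref{Est:l2} against the boundary estimate coming from \eqref{Est:Ker3}, picking up an extra factor $3^{-20dl}$. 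Summing in $l$ and then over the disjoint grid $\mathcal{D}_\Lambda$ (together with its dilates $3^l\mathcal{D}_\Lambda$) already gives a sparse form; no iteration is required. The $\gamma$ case is identical with Lemma~\ref{L:Est-gamma-Imp} and \eqref{Est:l2.} in place of Lemma~\ref{L:Est-mu-Imp} and \eqref{Est:l2}.

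The advantage of the paper's approach is that it gives the sparse bound for general $f,g$ at once, with the same $\eta$ as in Lemmas~\ref{L:Est-mu-Imp}--\ref{L:Est-gamma-Imp}, and avoids the machinery of \S6 altogether. Your route via restricted weak-type and stopping times is the correct template for genuinely multiscale operators (as in Theorem~\ref{Thm:Sp-M}), but for a single fixed $\Lambda$ it introduces an unnecessary recursion and an avoidable loss in $\eta$ through Theorem~\ref{Restr-Sparse}; also, your four-piece splitting borrowed from \S6 does not quite fit here, since the ``coarse-scale tail'' piece $\sup_{\Lambda>\ell(Q)}$ has no analogue when $\Lambda$ is fixed.
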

\begin{proof}
Fix $f,g : \mathbb{Z}^d \rightarrow \mathbb{C}$ finitely supported. Without loss of generality, $g \geq 0$.  Letting $\mathcal{D}_\Lambda$ denote the dyadic cubes with $\ell(Q)=\Lambda$, observe
\begin{align*}
&  \left \langle\sup_{\Lambda \leq \lambda < 2 \Lambda}| f * \check{\mu}_{a/q,\tau,\lambda}|, g \right \rangle  \\ \leq& \sum_{Q \in \mathcal{D}_\Lambda}  \left \langle \sup_{\Lambda \leq \lambda < 2 \Lambda}| f*\check{\mu}_{a/q,\tau,\lambda}|, g 1_Q\right \rangle \\\leq& \sum_{Q \in \mathcal{D}_\Lambda}   \left \langle \sup_{\Lambda \leq \lambda < 2 \Lambda} | (1_Qf) *\check{\mu}_{a/q,\tau,\lambda}|, g 1_Q \right \rangle \\  +  & \sum_{Q \in \mathcal{D}_\Lambda} \sum_{l \geq 1}  \left \langle  \sup_{\Lambda \leq \lambda < 2 \Lambda} | (1_{3^lQ \cap (3^{l-1}Q)^c} f) *\check{\mu}_{a/q,\tau,\lambda}|, g 1_Q \right \rangle  \\ =& \sum_{Q \in \mathcal{D}_\Lambda} A_Q + B_Q. 
\end{align*}
By Lemma \ref{L:Est-mu-Imp}, $A_Q \leq A 2^{dk/2} \Lambda^{-2-\eta(d,p,r)} \langle f \rangle_{Q,p} \langle g \rangle_{Q,r}|Q|$. Moreover, by estimate \eqref{Est:Ker3} and Lemma \ref{L:Bump}, it holds that for all $\left( \frac{1}{p} , \frac{1}{r} \right) \in [0,1]^2$ such that $\max \{ \frac{1}p, \frac{1}r \} =1$ and $M \in \mathbb{N}$
\begin{align}\label{Est:mu-Imp}
& \left \lVert \sup_{\Lambda \leq \lambda < 2 \Lambda} | (1_{3^lQ \cap (3^{l-1}Q)^c} f)*\check{\mu}_{a/q,\tau,\lambda}| \right \rVert_{\ell^{r^\prime}(Q)} \\ \leq & A_M 3^{-Ml} \left[ \frac{\epsilon^2+\tau^2}{\epsilon^2} \right]^{d/4} \Lambda^{d(1/r^\prime - 1/p)} || f||_{\ell^p(3^lQ)} \nonumber. 
\end{align}
Interpolating between estimates \eqref{Est:l2} and \eqref{Est:mu-Imp} ensures that for all $(\frac{1}{p}, \frac{1}{r}) \in \mathcal{S}(d)$ 
\begin{flalign*}
& \left \lVert \sup_{\Lambda \leq \lambda < 2 \Lambda} | (1_{3^lQ \cap (3^{l-1}Q)^c} f) *\check{\mu}_{a/q,\tau,\lambda} | \right \rVert _{\ell^{r^\prime}(Q)} \leq A 3^{-20dl} 2^{dk/2} \Lambda^{-2-\eta} \Lambda^{d(1/r^\prime - 1/p)} || f||_{\ell^p(3^lQ)}.& 
\end{flalign*}
provided we choose $M \geq M_0(d)$. 
From this estimate, it follows that 
\begin{align*}
B_Q \leq A 2^{dk/2} \Lambda^{-2} \sum_{l \in \mathbb{N}} 3^{-10dl} \langle  f \rangle_{3^lQ,p} \langle g \rangle_{Q,r} |Q|.
\end{align*}
Moreover, there is a sparse collection $\mathcal{S}$ for which 
\begin{align*}
\sum_{Q \in \mathcal{D}_\Lambda} \sum_{l \in \mathbb{N}} 3^{-10dl} \langle  f \rangle_{3^lQ,p}  \langle g \rangle_{Q,r}|Q| \leq A \sum_{S \in \mathcal{S}} \langle f \rangle_{S,p} \langle g \rangle_{S,r} |S|. 
\end{align*}
The proof of the estimate involving $\sup_{\Lambda \leq \lambda < 2 \Lambda} | f*\check{\gamma}_{a/q,\tau,\lambda}|$ is very similar, except that Lemma \ref{L:Est-gamma-Imp} and estimate \eqref{Est:l2.} are used in place of Lemma \ref{L:Est-mu-Imp} and estimate \eqref{Est:l2}.

\end{proof}
We now use Lemma \ref{L:Est-T} to deduce 
\begin{lemma}
For all $d \geq 5$ and $(\frac{1}{p}, \frac{1}{r}) \in \mathcal{S}(d)$, there exists $A=A(d,p,r)$ and $\eta=\eta(d,p,r)>0$ such that for all $\Lambda \in 2^{\mathbb{N}}$
\begin{align}
&\left| \left| \sup_{\Lambda \leq \lambda < 2 \Lambda} | \mathscr{A}_\lambda-\mathscr{B}_\lambda | : (p,r) \right| \right| \leq A \Lambda^{-\eta} \label{Est:SpE} \\ 
& \left| \left| \sup_{\Lambda \leq \lambda< 2 \Lambda} | \mathscr{B}_\lambda - \mathscr{C}_\lambda| : (p,r) \right| \right| \leq A \Lambda^{-\eta}\label{Est:SpE2}. 
\end{align}

\end{lemma}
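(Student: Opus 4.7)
The plan is to mirror the proof of Proposition \ref{P:Imp-E} step for step, replacing each per-slice operator-norm bound by the corresponding sparse-norm bound supplied by Lemma \ref{L:Est-T}. The mechanism is that the sparse norm $\|\cdot : (p,r)\|$ is subadditive and passes through both the $\tau$-integral appearing in the definitions \eqref{Def:a(lambda)}--\eqref{Def:c(lambda)} and through the finite sums over $a \in \mathbb{Z}_q^\times$ and $1 \leq q \leq \Lambda$.

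For \eqref{Est:SpE}, the difference $\mathscr{A}_\lambda^{a/q} - \mathscr{B}_\lambda^{a/q}$ arises from the portion of the $\tau$-integral defining $J_\lambda$ from which the smooth cutoff $\Phi_q$ has been removed. Decomposing $I(a,q) = [-\beta/(q\Lambda), \alpha/(q\Lambda)]$ into the dyadic rings $I_k(\Lambda)$ for $0 \leq k \leq \log_2(\Lambda/q) + \mathfrak{C}$ yields the sparse-norm analogue of \eqref{Est:TrianRl1}, namely
\begin{equation*}
\left\|\sup_{\Lambda \leq \lambda < 2\Lambda}\left|\mathscr{A}_\lambda^{a/q} - \mathscr{B}_\lambda^{a/q}\right| : (p,r)\right\| \leq A\, \Lambda^{-d+2} \sum_{k=0}^{\log_2(\Lambda/q)+\mathfrak{C}} \int_{I_k(\Lambda)} \frac{\left\|\sup_\lambda\left|T_{\mu_{a/q,\tau,\lambda}}\right|:(p,r)\right\|}{(\epsilon^2+\tau^2)^{d/4}}\, d\tau.
\end{equation*}
Substituting \eqref{Est:mu} in place of \eqref{Est:0} and running the same elementary arithmetic performed in Proposition \ref{P:Imp-E} shows that each $k$-summand contributes $A\, 2^{k} \Lambda^{-2-\eta}$. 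Summing in $k$ yields $A\, \Lambda^{-1-\eta}/q$, and summing over $a \in \mathbb{Z}_q^\times$ and $1 \leq q \leq \Lambda$ produces the target $A\, \Lambda^{-\eta}$.

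The proof of \eqref{Est:SpE2} is analogous. The difference $\mathscr{B}_\lambda^{a/q} - \mathscr{C}_\lambda^{a/q}$ corresponds to extending the $\tau$-domain from $I(a,q)$ to all of $\mathbb{R}$, which contributes only $|\tau| \gtrsim 1/(q\Lambda)$, equivalently $k \geq \log_2(\Lambda/q) - \mathfrak{C}$. The same integral representation, now with $\gamma_{a/q,\tau,\lambda}$ in place of $\mu_{a/q,\tau,\lambda}$ and \eqref{Est:gamma} in place of \eqref{Est:mu}, reduces each $k$-summand to $A\, q^{-2-\eta} 2^{-k}$. The resulting geometric tail is dominated by its smallest term, yielding $A\, q^{-1-\eta}/\Lambda$; summing over $a$ and $q$ then produces $A\, \Lambda^{-\eta'}$ for some (possibly smaller) $\eta' > 0$.

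The one subtlety I anticipate is justifying that the sparse-norm triangle inequality legitimately applies to the vector-valued integral over $\tau$. This is routine: pair against a test function $g$, move the integral outside the pairing by Fubini, apply the per-$\tau$ sparse bound from Lemma \ref{L:Est-T}, and note that the sparse collection at each $\tau$-slice depends only on $f,g$ via the stopping-time argument underlying Lemma \ref{L:Est-T}, so a single sparse collection can be chosen uniformly in $\tau$ after a countable refinement. Everything else is the same arithmetic already executed in Proposition \ref{P:Imp-E}.
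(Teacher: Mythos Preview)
Your proposal is correct and follows essentially the same route as the paper: pair against a test function $g$, push the $\tau$-integral outside, invoke Lemma \ref{L:Est-T} at each slice, and then repeat the arithmetic of Proposition \ref{P:Imp-E} verbatim. The paper sidesteps your ``uniform sparse collection'' worry by bounding each slice by $\sup_{\mathcal S}\Lambda_{\mathcal S,p,r}(f,g)$, which is a fixed number independent of $\tau$; no refinement is needed.
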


\begin{proof}
Begin by using \eqref{Est:mu} to observe that there is a constant $\mathfrak{C}>0$ and $\eta >0$ such that for every $f,g: \mathbb{Z}^d \rightarrow \mathbb{C}$ finitely supported 
\begin{align*}
&  \left| \left \langle \sup_{\Lambda \leq \lambda < 2 \Lambda} \left| ( \mathscr{A}_\lambda^{a/q} - \mathscr{B}_\lambda^{a/q}) f  \right|, g \right \rangle \right| \\\leq&  A \Lambda^{-d+2} \sum_{k=0}^{ \log_2(\Lambda/q)+\mathfrak{C}}  \int_{I_k(\Lambda)} \frac{ | \langle \sup_{\Lambda \leq \lambda < 2 \Lambda} | f * \check{\mu}_{a/q,\tau,\lambda}|, g \rangle |}{ (\epsilon^2 + \tau^2) ^{d/4}} d \tau \\ \leq&  A \Lambda^{-d+2} \sum_{k=0}^{\log_2(\Lambda/q)+\mathfrak{C}} \left[ \frac{2^k}{\Lambda^2}\right]^{1-d/2} 2^{dk/2} \Lambda^{-2-\eta}   \sup_{\mathcal{S}} \Lambda_{\mathcal{S}, p,r}(f,g) .
\end{align*}
However, $\Lambda^{-d+2} \sum_{k=0}^{\log_2(\Lambda/q)+\mathfrak{C}} \left[ \frac{2^k}{\Lambda^2}\right]^{1-d/2} 2^{dk/2}\Lambda^{-2 - \eta} \leq \frac{A}{q \Lambda^{1+\eta}}$. 
Summing on $a \in \mathbb{Z}^\times_q$ and then $q:1 \leq q \leq \Lambda$ yields an upper bound $O( \Lambda^{-\eta}  \sup_{\mathcal{S}} \Lambda_{\mathcal{S}, p,r}(f,g))$.
To finish, it suffices to note using \eqref{Est:gamma} that is $\mathfrak{C}>0$ and $ \eta >0$ such that for every $f,g : \mathbb{Z}^d \rightarrow \mathbb{C}$ finitely supported
\begin{align*}
 & \left| \left \langle \sup_{\Lambda \leq \lambda < 2 \Lambda} \left| ( \mathscr{B}_\lambda^{a/q} - \mathscr{C}_\lambda^{a/q}) f  \right|, g \right \rangle \right| \\ \leq&  A \Lambda^{-d+2} \sum_{k= \log_2(\Lambda/q)-\mathfrak{C}}^{\infty}  \int_{I_k(\Lambda)}\frac{  | \langle \sup_{\Lambda < \lambda < 2 \Lambda}| f * \check{\gamma}_{a/q,\tau,\lambda}|, g \rangle |}{ (\epsilon^2 + \tau^2) ^{d/4}} d \tau \\ \leq& A \Lambda^{-d+2} \sum_{k= \log_2(\Lambda/q)-\mathfrak{C}}^{\infty} \left[ \frac{2^k}{\Lambda^2} \right]^{1-d/2} q^{-2-\eta} 2^{dk(1/2-2/d)} \sup_{\mathcal{S}} \Lambda_{\mathcal{S}, p,r}(f,g).  
 \end{align*}
 However, $ \Lambda^{-d+2} \sum_{k= \log_2(\Lambda/q)-\mathfrak{C}}^{\infty} \left[ \frac{2^k}{\Lambda^2} \right]^{1-d/2} q^{-2-\eta} 2^{dk(1/2-2/d)}  \leq  \frac{A}{\Lambda q^{1+\eta}}.$  Summing on $a$ and $q$ yields an upper bound $O( \Lambda^{-\eta}  \sup_{\mathcal{S}} \Lambda_{\mathcal{S}, p,r}(f,g))$.

\end{proof}
Summing \eqref{Est:SpE} and \eqref{Est:SpE2} over $\Lambda \in 2^{\mathbb{N}}$ gives 
\begin{prop}\label{P:Sp-E}
For all $d \geq 5$ and $(\frac{1}{p}, \frac{1}{r}) \in \mathcal{S}(d)$
\begin{align*}
\left| \left| \sup_{\lambda \in \tilde{\Lambda}} |\mathscr{R}_\lambda| : (p,r) \right| \right| <\infty. 
\end{align*}
\end{prop}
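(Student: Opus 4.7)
The plan is to deduce Proposition \ref{P:Sp-E} directly from the dyadic-scale sparse bounds \eqref{Est:SpE} and \eqref{Est:SpE2} by exploiting the geometric decay in $\Lambda$. First I would write
\begin{align*}
\sup_{\lambda \in \tilde{\Lambda}} |\mathscr{R}_\lambda f(x)| \leq \sum_{\Lambda \in 2^{\mathbb{N}}} \sup_{\Lambda \leq \lambda < 2 \Lambda} |\mathscr{R}_\lambda f(x)|
\end{align*}
and split each term via $\mathscr{R}_\lambda = (\mathscr{A}_\lambda - \mathscr{B}_\lambda) + (\mathscr{B}_\lambda - \mathscr{C}_\lambda)$, so that for finitely supported $f,g : \mathbb{Z}^d \to \mathbb{C}$
\begin{align*}
\bigl|\bigl\langle \sup_{\lambda \in \tilde{\Lambda}} |\mathscr{R}_\lambda f| , g \bigr\rangle\bigr| \leq \sum_{\Lambda \in 2^{\mathbb{N}}} \bigl|\bigl\langle \sup_{\Lambda \leq \lambda < 2\Lambda} |(\mathscr{A}_\lambda - \mathscr{B}_\lambda)f|, g \bigr\rangle\bigr| + \sum_{\Lambda \in 2^{\mathbb{N}}} \bigl|\bigl\langle \sup_{\Lambda \leq \lambda < 2\Lambda} |(\mathscr{B}_\lambda - \mathscr{C}_\lambda)f|, g \bigr\rangle\bigr|.
\end{align*}

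Next, I would apply \eqref{Est:SpE} and \eqref{Est:SpE2} to each dyadic block. Fix $(\tfrac{1}{p}, \tfrac{1}{r}) \in \mathcal{S}(d)$ and let $\eta = \eta(d,p,r) > 0$ be the exponent furnished by those estimates. For each $\Lambda \in 2^{\mathbb{N}}$ there exists a sparse collection $\mathcal{S}_\Lambda$ (depending on $f, g$) with
\begin{align*}
\bigl|\bigl\langle \sup_{\Lambda \leq \lambda < 2\Lambda} |(\mathscr{A}_\lambda - \mathscr{B}_\lambda)f|, g \bigr\rangle\bigr| + \bigl|\bigl\langle \sup_{\Lambda \leq \lambda < 2\Lambda} |(\mathscr{B}_\lambda - \mathscr{C}_\lambda)f|, g \bigr\rangle\bigr| \leq A \Lambda^{-\eta} \Lambda_{\mathcal{S}_\Lambda, p, r}(f, g).
\end{align*}
Summing over $\Lambda \in 2^{\mathbb{N}}$ and using the geometric convergence of $\sum_{\Lambda} \Lambda^{-\eta}$ yields a bound of the form $A \sup_{\mathcal{S}} \Lambda_{\mathcal{S}, p, r}(f, g)$, provided one can coalesce the family $\{\mathcal{S}_\Lambda\}_{\Lambda}$ into a single sparse collection (or equivalently, take the supremum over sparse forms inside the geometric sum).

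The one mildly delicate point, and the only real obstacle, is this coalescence: the sparse collections $\mathcal{S}_\Lambda$ are constructed separately for each dyadic scale, so strictly speaking one obtains a bound by a \emph{sum} of sparse forms, not a single sparse form. The standard remedy, which I would invoke here, is that the geometric weights $\Lambda^{-\eta}$ decouple the scales: one can either take $\mathcal{S} = \bigcup_\Lambda \mathcal{S}_\Lambda$ (which remains essentially sparse after a bounded overlap reindexing, since at each point only boundedly many scales contribute nontrivially in a quantified sense) or, more cleanly, absorb the geometric decay into the supremum by writing $\sup_{\mathcal{S}} \Lambda_{\mathcal{S}, p, r}(f, g) \geq \Lambda_{\mathcal{S}_\Lambda, p, r}(f, g)$ for each $\Lambda$ and bounding $\sum_{\Lambda} \Lambda^{-\eta} \leq C_\eta$. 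Either way, the resulting inequality $|\langle \sup_{\lambda \in \tilde{\Lambda}}|\mathscr{R}_\lambda f|, g \rangle| \leq C \sup_{\mathcal{S}} \Lambda_{\mathcal{S}, p, r}(f, g)$ delivers the finiteness of $\|\sup_{\lambda \in \tilde{\Lambda}} |\mathscr{R}_\lambda| : (p, r)\|$ and hence the proposition.
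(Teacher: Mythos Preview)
Your proposal is correct and follows exactly the paper's approach: the paper's entire proof is the single sentence ``Summing \eqref{Est:SpE} and \eqref{Est:SpE2} over $\Lambda \in 2^{\mathbb{N}}$ gives Proposition \ref{P:Sp-E}.'' Your concern about coalescing the collections $\mathcal{S}_\Lambda$ is unnecessary, since by the paper's definition the sparse norm bound $\|\cdot:(p,r)\|\le A\Lambda^{-\eta}$ already reads $|\langle \cdot, g\rangle|\le A\Lambda^{-\eta}\sup_{\mathcal{S}}\Lambda_{\mathcal{S},p,r}(f,g)$, and the right-hand supremum is a fixed number independent of $\Lambda$; your second remedy (bounding each $\Lambda_{\mathcal{S}_\Lambda,p,r}(f,g)$ by the universal supremum and summing the geometric series) is therefore the clean one and matches the paper.
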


\begin{theorem}\label{Cor:Diag}
For all $d \geq 5$ and $(\frac{1}{p}, \frac{1}{r}) \in \mathcal{S}(d)$
\begin{align*}
\left| \left| \sup_{\lambda \in \tilde{\Lambda}} |\mathscr{A}_\lambda| : (p,r) \right| \right| <\infty. 
\end{align*}
\end{theorem}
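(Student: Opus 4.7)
The plan is to combine the two sparse domination results already established in this section and the previous one, namely Theorem \ref{Thm:Sp-M} for $\sup_{\lambda \in \tilde{\Lambda}} |\mathscr{C}_\lambda|$ and Proposition \ref{P:Sp-E} for $\sup_{\lambda \in \tilde{\Lambda}} |\mathscr{R}_\lambda|$, via the decomposition $\mathscr{A}_\lambda = \mathscr{C}_\lambda + \mathscr{R}_\lambda$ from \S 3. Since the sparse norm $\| \cdot : (p,r) \|$ is subadditive under pointwise domination of absolute values of operators, the entire proof reduces to a one-line triangle inequality followed by an appeal to the two cited results.

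More concretely, for every $\lambda \in \tilde{\Lambda}$ and $x \in \mathbb{Z}^d$ one has $|\mathscr{A}_\lambda f(x)| \leq |\mathscr{C}_\lambda f(x)| + |\mathscr{R}_\lambda f(x)|$, and taking supremum in $\lambda$ preserves this inequality. Thus for any finitely supported $f, g : \mathbb{Z}^d \to \mathbb{C}$,
\begin{align*}
\left| \left\langle \sup_{\lambda \in \tilde{\Lambda}} |\mathscr{A}_\lambda f|, g \right\rangle \right|
\leq \left| \left\langle \sup_{\lambda \in \tilde{\Lambda}} |\mathscr{C}_\lambda f|, g \right\rangle \right|
+ \left| \left\langle \sup_{\lambda \in \tilde{\Lambda}} |\mathscr{R}_\lambda f|, g \right\rangle \right|.
\end{align*}
By Theorem \ref{Thm:Sp-M} applied to the first summand, there is a $\frac{1}{2}$-sparse collection $\mathcal{S}_1 = \mathcal{S}_1(f,g)$ with
\begin{align*}
\left| \left\langle \sup_{\lambda \in \tilde{\Lambda}} |\mathscr{C}_\lambda f|, g \right\rangle \right|
\leq A \, \Lambda_{\mathcal{S}_1, p, r}(f, g),
\end{align*}
and by Proposition \ref{P:Sp-E} applied to the second summand, there is a $\frac{1}{2}$-sparse collection $\mathcal{S}_2 = \mathcal{S}_2(f,g)$ with an analogous bound involving $\Lambda_{\mathcal{S}_2, p, r}(f, g)$.

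Taking the supremum over sparse collections on the right hand side, one obtains
\begin{align*}
\left| \left\langle \sup_{\lambda \in \tilde{\Lambda}} |\mathscr{A}_\lambda f|, g \right\rangle \right|
\leq A \sup_{\mathcal{S}} \Lambda_{\mathcal{S}, p, r}(f, g),
\end{align*}
which is exactly the statement that $\| \sup_{\lambda \in \tilde{\Lambda}} |\mathscr{A}_\lambda| : (p, r) \| < \infty$ for every $(\tfrac{1}{p}, \tfrac{1}{r}) \in \mathcal{S}(d)$. There is no genuine obstacle here: the entire analytic content has already been absorbed into the two prerequisite propositions, and the current theorem is the corollary that records their combination. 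The only thing to observe, if pressed for additional rigor, is that the union $\mathcal{S}_1 \cup \mathcal{S}_2$ of two $\tfrac{1}{2}$-sparse collections is $\rho$-sparse for some absolute $\rho$, which as noted in \S 1 costs only an implicit constant and so can be absorbed into $A$.
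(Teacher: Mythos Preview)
Your proof is correct and follows exactly the paper's approach: the paper's own proof is the single line ``Combine Theorem \ref{Thm:Sp-M} and Proposition \ref{P:Sp-E},'' and you have simply spelled out the triangle-inequality details behind that combination. The only cosmetic point is that the displayed inequality with absolute values on each pairing should be justified by taking $g \geq 0$ (or replacing $g$ by $|g|$), which is harmless since the maximal operators are nonnegative.
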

\begin{proof}
Combine Theorem \ref{Thm:Sp-M} and Proposition \ref{P:Sp-E}. 
\end{proof}

\section{Sparse Domination for $\sup_\lambda |\mathscr{A}_\lambda|$}

In addition to Theorem \ref{Restr-Sparse}, we shall need a localized variant of Theorem \ref{Cor:Diag}:
\begin{theorem}\label{Cor:Loc}
Let $d \geq 5$ and $(\frac{1}{r}, \frac{1}{s}) \in \mathcal{S}(d)$. For any collection of cubes $\mathcal{C}$ and $f,g : \mathbb{Z}^d \to \mathbb{C}$ finitely supported, there is a sparse collection of cubes $\mathcal{S}$ such that 
\begin{align*}
\left \langle \sup_{S \supset \mathcal{C}} \left| \mathscr{A}_S f \right|, |g|  \right \rangle \leq A \Lambda_{\mathcal{S}, r,s} (f,g),
\end{align*}
where the supremum is restricted to those spheres $S = \{ y \in \mathbb{Z}^d : |x-y| = \lambda\}$ for which the corresponding ball $B_S = \{ y \in \mathbb{Z}^d : |x-y| \leq \lambda\}$ satisfies $B_S\supset Q$ for some cube $Q\in \mathcal{C}$, and the sparse collection $\mathcal{S}$ satisfies the property that for every cube $Q \in \mathcal{S}$ there is a cube $Q_* \in \mathcal{C}$ such that $Q \supset Q_*$. 

\end{theorem}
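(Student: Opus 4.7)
The plan is to mimic the proof of Theorem \ref{Cor:Diag} by decomposing $\mathscr{A}_\lambda = \mathscr{C}_\lambda + \mathscr{R}_\lambda$ and establishing a localized sparse bound for each piece, then combining via the restricted weak-type machinery of Theorem \ref{Restr-Sparse}. The essential new ingredient is bookkeeping: I must verify that the sparse collections produced by the previous stopping-time and dyadic arguments consist of cubes each containing some $Q_* \in \mathcal{C}$. This will follow from the simple geometric observation that the restriction $B_S \supset Q$ forces the radius $\lambda$ of any admissible sphere centered at $x$ to satisfy $\lambda \gtrsim \ell(Q) + \mathrm{dist}(x,Q)$, and hence only scales $\Lambda \gtrsim \ell(Q)$ contribute to the supremum at $x$.

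First, for the residual term, I would revisit Lemma \ref{L:Est-T} and the proof of Proposition \ref{P:Sp-E}. There, for each dyadic $\Lambda$, the pairing $\langle \sup_{\Lambda \le \lambda < 2\Lambda} |\mathscr{R}_\lambda f|, g\rangle$ is dominated by a sparse form built from dyadic cubes $Q \in \mathcal{D}_\Lambda$ at scale $\Lambda$, together with their dyadic dilates $3^l Q$, picking up a decay factor $\Lambda^{-\eta}$ that sums over $\Lambda$. Under the localization constraint, any cube $Q \in \mathcal{D}_\Lambda$ on which the sphere can be admissibly centered must satisfy $3Q \supset Q_*$ for some $Q_* \in \mathcal{C}$ (otherwise $B_S \cap Q = \varnothing$ for all admissible spheres centered in $Q$, and the contribution vanishes). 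After relabeling these dyadic cubes by their minimal enlargements containing a $Q_* \in \mathcal{C}$, the sparse form satisfies the required containment property.

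Next, for the main term, I would replay the stopping-time argument in the proof of Theorem \ref{Thm:Sp-M}. Starting from an initial dyadic cube $3E$ containing the supports of $f$ and $g$, one constructs the stopping tree $\{\mathcal{Q}(E), \mathcal{Q}(Q), \dots\}$ using maximal averages of $\sup_\Lambda \sup_{\Lambda\le\lambda<2\Lambda} |\mathscr{C}^{a/q}_\lambda P_{\leq 1/\Lambda} f|$ and $\sup_\Lambda \sup_{\Lambda\le\lambda<2\Lambda} |\mathscr{C}^{a/q}_\lambda P_{N/\Lambda} f|$. The kernel bounds \eqref{Est:Ker10}, \eqref{Est:Ker11} concentrate each of these operators at the scale $\Lambda$, and under the localization, only scales $\Lambda$ with $\Lambda \gtrsim \ell(Q_*)$ for some $Q_* \in \mathcal{C}$ (lying within $O(\Lambda)$ of the center) produce nonzero output. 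Consequently the stopping tree cannot descend below the scales dictated by $\mathcal{C}$ while still receiving a nontrivial pointwise contribution, so terminating the recursion appropriately produces a sparse collection each of whose cubes contains some $Q_* \in \mathcal{C}$.

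The main obstacle is not the individual mechanics — both the stopping-time and dyadic arguments carry over essentially verbatim — but rather ensuring that the restricted weak-type bound output by Theorem \ref{Restr-Sparse} preserves the localization. This is handled by running the restricted weak-type version of the above localized sparse bounds at a corner of $\mathcal{S}(d)$, upgrading via Theorem \ref{Restr-Sparse} while observing that its proof builds the final sparse collection $\mathcal{S}(f,g)$ by a stopping-time refinement of the input sparse collections; hence containment of each cube in some $Q_* \in \mathcal{C}$ is preserved throughout the refinement. Summing the localized sparse bounds for $\mathscr{C}_\lambda$ and $\mathscr{R}_\lambda$ then yields the theorem.
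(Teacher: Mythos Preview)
Your proposal is correct and matches the paper's approach exactly: the paper's own proof is the single line ``Retrace the arguments used to show Theorem \ref{Cor:Diag},'' and you have supplied that retracing in detail. One small correction to your last paragraph: in the proof of Theorem \ref{Restr-Sparse} the output collection $\mathcal{S}(f,g)$ is built from stopping times on $\langle f\rangle_{Q,\bar p}$ and $\langle g\rangle_{Q,\bar r}$, \emph{not} as a refinement of the input collections $\mathcal{S}_{k,l}$; the reason localization nevertheless survives is that in the final comparison each input cube $Q$ is assigned to a containing cube $R\in\mathcal{S}(f,g)$ (or to $3E$), so one may discard from $\mathcal{S}(f,g)$ any $R$ not containing some input cube without affecting the bound, and the surviving cubes then satisfy $R\supset Q\supset Q_*\in\mathcal{C}$.
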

\begin{proof}
Retrace the arguments used to show Theorem \ref{Cor:Diag}. 
\end{proof}
The rest of this section is dedicated to showing estimate \eqref{Est:Sp}, which we rewrite as
\begin{theorem}\label{Thm:Sp-Corn}
Let $d \geq 5$ and $\left( \frac{1}{p}, \frac{1}{r} \right) \in \mathcal{R}(d)$. Then 
\begin{align*}
 \left| \left| \sup_{\lambda \in \tilde{\Lambda}} |\mathscr{A}_\lambda| : (p,r)\right| \right| < \infty. 
\end{align*}
\end{theorem}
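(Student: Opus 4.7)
The plan is to extend the sparse bound from the region $\mathcal{S}(d)$ already established in Theorem \ref{Cor:Diag} to the larger region $\mathcal{R}(d)$, by combining the restricted weak-type reduction in Theorem \ref{Restr-Sparse} with the localized sparse bound in Theorem \ref{Cor:Loc}. Since $\mathcal{R}(d) \setminus \overline{\mathcal{S}(d)}$ is the triangular region near the corner $(0,1)$, where $1/p + 1/r > 1$ strictly in the interior, Theorem \ref{Restr-Sparse} reduces the problem at any $(1/p_0, 1/r_0) \in \mathcal{R}(d) \setminus \overline{\mathcal{S}(d)}$ to the restricted weak-type version at some nearby $(1/\bar p, 1/\bar r) \in \mathcal{R}(d)$; the cases with $(1/\bar p, 1/\bar r) \in \mathcal{S}(d)$ are already handled by Theorem \ref{Cor:Diag}, so I would focus on $(1/\bar p, 1/\bar r) \in \mathcal{R}(d) \setminus \overline{\mathcal{S}(d)}$.

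Fix $f = 1_{E_1}$ and $g = 1_{E_2}$ with finite supports inside a large dyadic cube $3E$. I would construct a Calder\'on--Zygmund stopping tree $\mathcal{S}_{CZ}$ rooted at $3E$ whose children of $R$ are the maximal dyadic $Q \subsetneq R$ satisfying either $\langle 1_{E_1}\rangle_{Q, \bar p} > A_0 \langle 1_{E_1}\rangle_{R, \bar p}$ or $\langle 1_{E_2}\rangle_{Q, \bar r} > A_0 \langle 1_{E_2}\rangle_{R, \bar r}$; for $A_0$ large, this is a $\tfrac{1}{2}$-sparse family and will serve as the output sparse collection. Apply Theorem \ref{Cor:Loc} at some $(r,s) \in \mathcal{S}(d)$ near $(\tfrac{d-2}{d}, \tfrac{d-2}{d})$ with $\mathcal{C} = \mathcal{S}_{CZ}$: this delivers a sparse family $\mathcal{S}_*$ whose cubes each contain some $Q_*' \in \mathcal{S}_{CZ}$ and dominates the contribution from spheres whose balls contain at least one stopping cube. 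The complementary short-sphere part, restricted to each $E(R) := R \setminus \bigcup \mathrm{children}(R)$, depends only on $f$ near $R$ and is handled by Theorem \ref{Cor:Diag} applied locally at exponents in $\mathcal{S}(d)$ and then tightened to $(\bar p, \bar r)$ via the stopping jumps, giving a per-$R$ contribution $\leq C |R| \langle 1_{E_1}\rangle_{R, \bar p}\langle 1_{E_2}\rangle_{R, \bar r}$.

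To repackage the large-sphere contribution as a $(\bar p, \bar r)$-sparse form on $\mathcal{S}_{CZ}$, for each $Q_* \in \mathcal{S}_*$ I would designate a descendant stopping cube $Q_*' \in \mathcal{S}_{CZ}$, pass from averages at $Q_*$ to those at $Q_*'$ through the stopping-chain estimates, and sum over $Q_*$ with a fixed $Q_*'$ using the geometric factor $(|Q_*'|/|Q_*|)^{1/r + 1/s - 1}$; because $1/r + 1/s > 1$ strictly in the interior of $\mathcal{S}(d)$, this exponent is positive and the sum telescopes, yielding
$$\sum_{Q_* \in \mathcal{S}_*} \langle f\rangle_{Q_*, r}\langle g\rangle_{Q_*, s} |Q_*| \leq C \sum_{R \in \mathcal{S}_{CZ}} |R| \langle 1_{E_1}\rangle_{R, \bar p}\langle 1_{E_2}\rangle_{R, \bar r}.$$
Adding the short-sphere contribution, the full sum is a $(\bar p, \bar r)$-sparse form on $\mathcal{S}_{CZ}$, completing the restricted weak-type bound and hence, via Theorem \ref{Restr-Sparse}, the theorem.

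The main obstacle is this last conversion step: the localized sparse bound in Theorem \ref{Cor:Loc} gives averages at exponents $(r,s)$ strictly inside $\mathcal{S}(d)$, well separated from $(\bar p, \bar r)$ near $(\infty, 1)$, so simple monotonicity of $\langle \cdot\rangle_{Q,p}$ in $p$ cannot directly transfer one sparse form into the other. The argument succeeds only because the sparsity of $\mathcal{S}_*$ yields a geometric factor whose exponent $1/r + 1/s - 1$ is strictly positive precisely on the interior of $\mathcal{S}(d)$, and this positivity dovetails with the control on averages provided by the stopping tree to collapse ancestral sums into the intended $(\bar p, \bar r)$-averages at the stopping scale.
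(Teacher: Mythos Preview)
Your overall architecture is sound—reducing via Theorem~\ref{Restr-Sparse} to a restricted weak-type sparse bound, running a stopping-time construction, and invoking Theorem~\ref{Cor:Loc} for the large-sphere part is exactly what the paper does. The gap is in your ``repackaging'' step, where you claim
\[
\sum_{Q_* \in \mathcal{S}_*} \langle f\rangle_{Q_*, r}\langle g\rangle_{Q_*, s} |Q_*| \leq C \sum_{R \in \mathcal{S}_{CZ}} |R|\, \langle 1_{E_1}\rangle_{R, \bar p}\langle 1_{E_2}\rangle_{R, \bar r}.
\]
This inequality is false in general. The trouble is on the $g$-side: you are trying to pass from exponent $s$ (near $\tfrac{d}{d-2}$) down to $\bar r$ (near $1$), and for indicator functions with density at most one, $\langle 1_{E_2}\rangle_{Q,s} \geq \langle 1_{E_2}\rangle_{Q,\bar r}$, so the left side can be genuinely larger. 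Concretely, take $E_1 = 3E$ and $E_2$ a single point, and let $\mathcal{S}_* = \{3E\}$ (a legitimate output of Theorem~\ref{Cor:Loc}, since $3E$ contains stopping cubes). The left side is $|3E|^{1-1/s} \approx |E|^{2/d}$, while the right side is $\sum_R |R|^{1-1/\bar r} \lesssim |E|^{1-1/\bar r}$, which is much smaller once $1/\bar r$ is close to $1$. The geometric factor $(|Q_*'|/|Q_*|)^{1/r+1/s-1}$ you invoke controls sums over \emph{ancestors} of a fixed cube, but it cannot manufacture the missing decay in the $g$-exponent; the stopping conditions are at $(\bar p,\bar r)$ and give no control on $\langle 1_{E_2}\rangle_{Q_*,s}$.

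What the paper does instead is avoid converting one sparse form into another. It runs the recursion one generation at a time; at the top cube $3E$, Theorem~\ref{Cor:Loc} together with the stopping conditions collapses the large-sphere contribution to the \emph{single} term $A\,\langle 1_{E_1}\rangle_{3E,p_1}\langle 1_{E_2}\rangle_{3E,r_1}|E|$ at exponents $(p_1,r_1) \in \mathcal{S}(d)$. Crucially, the same quantity is also trivially bounded by $\|f\|_{\ell^\infty}\langle 1_{E_2}\rangle_{3E,1}|E| \leq \langle 1_{E_2}\rangle_{3E,1}|E|$, using only $|\mathscr{A}_\lambda f| \leq \|f\|_{\ell^\infty} \leq 1$. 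Since $(\tfrac1p,\tfrac1r)$ lies on the segment from $(0,1)$ to $(\tfrac1{p_1},\tfrac1{r_1})$, the elementary inequality $\min\{x,y\} \leq x^{1-\theta}y^\theta$ interpolates these two bounds to $A\,\langle 1_{E_1}\rangle_{3E,p}\langle 1_{E_2}\rangle_{3E,r}|E|$, after which one recurses on the small-sphere part. The missing ingredient in your argument is precisely this use of the trivial $\ell^\infty \to \ell^\infty$ bound; without it there is no mechanism to reach exponents near $(0,1)$.
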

There are two difficulties in the sparse setting that complicate the proof of Theorem \ref{Thm:Sp-Corn}. The first is that there is no general sparse interpolation machinery. The second is that there is no sparse bound at $(0,1)$, as this point does not break the duality condition. Any successful argument that extends sparse bounds from $\mathcal{S}(d)$ to $\mathcal{R}(d)$ must work with localized sparse bounds for $\sup_{\lambda \in \tilde{\Lambda}} | \mathscr{A}_\lambda|$ near $(\frac{d-2}{d}, \frac{d-2}d)$ and appropriately leverage the trivial $\ell^\infty \to \ell^\infty$ estimate.

\begin{proof}{[Theorem \ref{Thm:Sp-Corn}]}
By Theorem \ref{Restr-Sparse}, it suffices to prove a restricted weak-type sparse bound in a small neighborhood of the line connecting $(0,1)$ with $(\frac{d-2}{d}, \frac{d-2}{d})$ intersected with $\mathcal{R}(d)$. In particular, we shall fix $(\frac{1}{p_1}, \frac{1}{p_2})$ close to $(\frac{d-2}{d}, \frac{d-2}{d})$ and prove sparse esitmates along the line connecting $(\frac{d-2}{d}, \frac{d-2}{d})$ to $(0,1)$. So fix $\left( \frac{1}{p}, \frac{1}{r}\right) \in \mathcal{R}(d)$ on this line and $f,g : \mathbb{Z}^d \to \mathbb{C}, |f| \leq 1_{E_1},|g| \leq 1_{E_2}$ and supported in $3E$ for some dyadic cube $E$. Let the first sparse generation of cubes $\mathcal{Q}(E)$ be those maximal dyadic cubes with respect to the properties 
\begin{align*}
 \langle f \rangle_{3Q, p_1} \geq& A_0 \langle f \rangle_{3E,p_1} \\ 
 \langle g \rangle_{3Q, r_1} \geq& A_0 \langle g \rangle_{3E,r_1} \\ 
 \left \langle \sup_{\lambda \in \tilde{\Lambda}} | \mathscr{A}_\lambda f | \right \rangle_{3Q, p_1} \geq& A_0 \langle f \rangle_{3E,p_1}.
 \end{align*}
For large enough constant $A_0, \left| \bigcup_{Q \in \mathcal{Q}(E) } Q \right| \leq \frac{|E|}{100}$. 
The restricted weak-type sparse bound is therefore reduced to dominating 
\begin{align*}
\left \langle \sum_{Q \in \mathcal{Q}(E)} 1_Q \sup_{\lambda \in \tilde{\Lambda}} | \mathscr{A}_\lambda f | , g \right \rangle =& \left \langle \sum_{Q \in \mathcal{Q}(E)}  1_Q \sup_{S \not \subset 3Q} | \mathscr{A}_S f | , g \right \rangle \\ +& \left \langle \sum_{Q \in \mathcal{Q}(E)}  1_Q \sup_{S \subset 3Q} | \mathscr{A}_S f | , g  \right \rangle \\ =& I + II. 
\end{align*}
As we may recurse on the term $II$, it suffices to bound term $I$ by $A \langle 1_{E_1} \rangle_{3E,p} \langle 1_{E_2} \rangle_{3E,r} |E|$. To this end, estimate using Corollary \ref{Cor:Loc} with $\mathcal{B} = \mathcal{Q}(E)$ that
\begin{align*}
\left \langle \sum_{Q \in \mathcal{Q}(E)}  1_Q \sup_{ S \not \subset 3Q } | \mathscr{A}_S f | , |g| \right \rangle \leq &  \left \langle  \sup_{S \supset \mathcal{Q}(E)} | \mathscr{A}_S f | , |g| \right \rangle \\ \leq & A \sum_{Q \in \mathcal{S}} \langle f \rangle_{Q, p_1} \langle g \rangle_{Q, r_1} |Q|,
\end{align*}
where the supremum is restricted to those discrete spheres $S = \{ y : = |x-y| = \lambda\}$ for which the corresponding ball $B_S = \{ y : |x-y| \leq \lambda\}$ satisfies $B_S\supset R$ for some ball $R \in \mathcal{Q}(E)$, and the sparse collection $\mathcal{S}$ satisfies the property that for all $Q \in \mathcal{S}$ there is $R \in \mathcal{Q}(E)$ such that $Q \supset R.$ So, for each $Q \in \mathcal{S}$, $\langle f \rangle_{Q, p_1} \leq \langle f \rangle_{3E, p_1}, \langle g \rangle_{Q, r_1} \leq \langle g \rangle_{3E, r_1}$ and 
\begin{align*}
 \sum_{Q \in \mathcal{S}} \langle f \rangle_{Q, p_1} \langle g \rangle_{Q, p_2} |Q| \leq& A \langle f \rangle_{3E, p_1} \langle g \rangle_{3E, p_2}  \sum_{Q \in \mathcal{S}: |Q| \leq |E|, Q \cap 3E \not = \emptyset}  |Q|  \\+&  \sum_{Q \in \mathcal{S} : |Q|> |E|, Q \cap 3E \not = \emptyset} || f||_{\ell^{p_1}(3E)} || g  ||_{\ell^{r_1}(3E)} |Q|^{1-\frac{1}{p_1} - \frac{1}{r_1}} \\ &\leq A  \langle f \rangle_{3E,p_1} \langle g \rangle_{3E, r_1} |E|. 
\end{align*}
However, we have the following trivial estimate

\begin{align*} \left \langle \sum_{Q \in \mathcal{Q}(E)}  1_Q \sup_{S \not \subset 3Q} | \mathscr{A}_S f | , |g| \right \rangle \leq \langle f \rangle_{3E,\infty} \langle |g| \rangle_{3E, 1} |E|.
\end{align*}
The restricted weak-type estimate is finally obtained by noting

\begin{align*}
& \left \langle \sum_{Q \in \mathcal{Q}(E)}  1_Q \sup_{S \not \subset 3Q} | \mathscr{A}_S f | , |g| \right \rangle \\ \leq & \min \{ \langle 1_{E_1} \rangle_{3E,\infty} \langle 1_{E_2} \rangle_{3E, 1} ,A  \langle 1_{E_1} \rangle_{3E,p_1} \langle 1_{E_2} \rangle_{3E, r_1} \} |E| \\ \leq & A \langle 1_{E_1} \rangle_{3E,p} \langle 1_{E_2} \rangle_{3E, r} |E|.
\end{align*}

\end{proof}
\section{Counterexamples}
We finish by showing the necessary statements at the ends of Theorems \ref{Thm:1} and \ref{Thm:2}. 
 \begin{prop}\label{P:Imp-N}
Let $d \geq 5$. A necessary condition for 
\begin{align*}
\left| \left| \sup_{\Lambda \leq \lambda < 2 \Lambda} | \mathscr{A}_\lambda  | \right| \right|_{\ell^p \to \ell^{r^\prime}} \leq A \Lambda^{d(1/r^\prime - 1/p)}
\end{align*}
to hold for all $\Lambda \in 2^{\mathbb{N}}$ is $\max\left\{ \frac{1}{p} +\frac{2}{d}, \frac{1}{r}+\frac{2}{pd} \right\} \leq 1$.

\end{prop}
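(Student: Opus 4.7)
The plan is to exhibit two explicit test functions, one for each of the asserted necessary inequalities. Both arguments rely only on the cardinality bound $|\{y \in \mathbb{Z}^d : |y| = \Lambda\}| \asymp \Lambda^{d-2}$, valid for $d \geq 5$ and $\Lambda \in 2^{\mathbb{N}} \subset \tilde{\Lambda}$ since $\Lambda^2 \in \mathbb{N}$. In both cases it will suffice to retain only the single radius $\lambda = \Lambda$ in the supremum.

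For the condition $\tfrac{1}{p} + \tfrac{2}{d} \leq 1$, I would take $f = \delta_0$, giving $\|f\|_{\ell^p} = 1$. For every $x \in \mathbb{Z}^d$ with $\Lambda \leq |x| < 2\Lambda$, one has $|x|^2 \in \mathbb{N}$ and $\mathscr{A}_{|x|}\delta_0(x) = 1/|S_{|x|}| \asymp \Lambda^{-(d-2)}$. Consequently
\[
\sup_{\Lambda \leq \lambda < 2\Lambda} |\mathscr{A}_\lambda f|(x) \gtrsim \Lambda^{-(d-2)}\,\mathbf{1}_{\{\Lambda \leq |x| < 2\Lambda\}},
\]
whose $\ell^{r^\prime}$-norm is $\gtrsim \Lambda^{-(d-2)+d/r^\prime}$. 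Matching this against the assumed upper bound $A \Lambda^{d(1/r^\prime - 1/p)}$ and letting $\Lambda \to \infty$ forces $-(d-2) \le -d/p$, i.e.\ $\tfrac{1}{p} + \tfrac{2}{d} \leq 1$.

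For the condition $\tfrac{1}{r} + \tfrac{2}{pd} \leq 1$, I would take $f = \mathbf{1}_{S_\Lambda}$ with $S_\Lambda = \{y \in \mathbb{Z}^d : |y| = \Lambda\}$, so that $\|f\|_{\ell^p} \asymp \Lambda^{(d-2)/p}$. The key observation is that $|-y| = |y|$, so
\[
\mathscr{A}_\Lambda f(0) = \frac{1}{|S_\Lambda|} \sum_{|y| = \Lambda} \mathbf{1}_{|y| = \Lambda} = 1,
\]
which yields $\|\sup_\lambda |\mathscr{A}_\lambda f|\|_{\ell^{r^\prime}} \geq 1$ from the single point $x = 0$. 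Substituting into the assumed bound gives $1 \leq A \Lambda^{d(1/r^\prime - 1/p)} \cdot \Lambda^{(d-2)/p} = A \Lambda^{d/r^\prime - 2/p}$, and as $\Lambda \to \infty$ this forces $\tfrac{d}{r^\prime} \geq \tfrac{2}{p}$, equivalently $\tfrac{1}{r} + \tfrac{2}{pd} \leq 1$.

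The two tests are essentially dual in flavor: $f = \delta_0$ spreads mass onto spheres and detects the pointwise size of the kernel, while $f = \mathbf{1}_{S_\Lambda}$ concentrates mass on a single sphere whose convolution with the spherical kernel reproduces a unit value at the origin. I anticipate no technical obstacle, since both arguments use only the sphere cardinality estimate already cited earlier in the paper together with a single trivially chosen radius $\lambda = \Lambda$.
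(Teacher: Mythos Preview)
Your proposal is correct and follows essentially the same argument as the paper: the delta at the origin for the condition $\tfrac{1}{p}+\tfrac{2}{d}\le 1$ and the indicator of the sphere of radius $\Lambda$ for the condition $\tfrac{1}{r}+\tfrac{2}{pd}\le 1$, each combined with the sphere cardinality bound $|S_\lambda|\asymp\lambda^{d-2}$. One small expository remark: in the first test you actually use the varying radius $\lambda=|x|$ over the annulus rather than the fixed value $\lambda=\Lambda$, so your opening and closing sentences about ``a single radius $\lambda=\Lambda$'' slightly misdescribe that step.
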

\begin{proof}
The necessity of $\frac{1}{p} + \frac{2}{d} \leq 1$ follows by considering $f=1_{\{0\}}$. Indeed, it is straightforward to see that for $d \geq 5$ and this choice of $f$
\begin{align*}
\sup_{\Lambda \leq \lambda <2 \Lambda} f (n) \geq A \Lambda^{2-d} 1_{|n| \simeq \Lambda} (n),
\end{align*}
and the uniform estimate 
\begin{align*}
 \Lambda^{-d/r^\prime} \left| \left| \sup_{\Lambda \leq \lambda < 2 \Lambda} \left| \mathscr{A}_\lambda f \right| \right| \right|_{\ell^{r^\prime}(\mathbb{Z}^d)}  \leq A \Lambda^{-d/p} || f||_{\ell^p(\mathbb{Z}^d)}
\end{align*}
implies $\Lambda^{2-d} \leq A \Lambda^{-d/p}$. That the condition $\frac{1}{p} +\frac{2}{d} \leq 1$ must hold follows by taking $\Lambda \in 2^{\mathbb{N}}$ arbitrarily large. 

The necessity of $ \frac{1}{r}+\frac{2}{pd} \leq 1$ follows from setting $f_\Lambda=1_{|n|=\Lambda}(n)$. Then it is immediate that $\sup_{\Lambda \leq \lambda < 2 \Lambda} \mathscr{A}_\lambda f_\Lambda (0) = 1$ and 
\begin{align*}
\Lambda^{-d/r^\prime} \leq \Lambda^{-d/r^\prime} \left|\left| \sup_{\Lambda \leq \lambda <2 \Lambda}  \mathscr{A}_\lambda f_\Lambda  \right| \right|_{\ell^{r^\prime}(\mathbb{Z}^d)} \leq A \Lambda^{-d/p} || f_\Lambda||_{\ell^p(\mathbb{Z}^d)} \leq A \Lambda^{-d/p}  \Lambda^{(d-2)/p}. 
\end{align*}
The necessity of $\frac{1}{r} + \frac{2}{pd} \leq 1$ follows by again taking $\Lambda \in 2^{\mathbb{N}}$ arbitrarily large.

\end{proof}
Theorem \ref{Thm:1} follows from Propositions \ref{Cor:Imp-Diag} and \ref{P:Imp-N}.  Lastly, we record
\begin{prop}\label{P:Sp-N}
A necessary condition for 
\begin{align*}
 \left| \left| \sup_{\Lambda \leq \lambda < 2 \Lambda} \left| \mathscr{A}_\lambda  \right|: (p,r) \right| \right|  \leq A \Lambda^{d(1/r^\prime - 1/p)} 
\end{align*}
 to hold for all $\Lambda \in 2^{\mathbb{N}}$ is $\max\left\{ \frac{1}{p} +\frac{2}{d}, \frac{1}{r}+\frac{2}{pd} \right\} \leq 1$. 
\end{prop}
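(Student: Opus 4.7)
The plan is to adapt the proof of Proposition \ref{P:Imp-N} to the sparse setting, using the same two test functions but with the operator norm replaced by the sparse norm. The new ingredient is the inequality $\sup_{\mathcal{S}} \Lambda_{\mathcal{S},p,r}(f,g) \leq A \|f\|_{\ell^p(\mathbb{Z}^d)} \|g\|_{\ell^r(\mathbb{Z}^d)}$, which I will verify directly for each of the chosen test functions: since only cubes containing a specified singleton contribute to the sparse form, one can compute the contribution at each dyadic scale and sum, the geometric series converging in the regime $\frac{1}{p}+\frac{1}{r} > 1$ that is relevant here.

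For the necessity of $\frac{1}{p} + \frac{2}{d} \leq 1$, I take $f = 1_{\{0\}}$ and $g = 1_A$ with $A = \{n \in \mathbb{Z}^d : \Lambda \leq |n| < 2\Lambda\}$, exactly as in Proposition \ref{P:Imp-N}. The calculation there gives $\left\langle \sup_{\Lambda \leq \lambda < 2\Lambda} \left|\mathscr{A}_\lambda\right| f, g \right\rangle \geq A^{-1} \Lambda^2$ via the estimate $\sup_{\Lambda \leq \lambda < 2\Lambda} \mathscr{A}_\lambda f(n) \geq A^{-1}\Lambda^{2-d}1_A(n)$; combined with $\|f\|_p = 1$, $\|g\|_r \simeq \Lambda^{d/r}$, and the sparse-form upper bound, the assumed sparse estimate yields
\[ \Lambda^2 \leq A \Lambda^{d(1/r' - 1/p)} \cdot \Lambda^{d/r} = A \Lambda^{d - d/p}, \]
and taking $\Lambda \to \infty$ forces $2 \leq d - d/p$, equivalently $\frac{1}{p} + \frac{2}{d} \leq 1$.

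For the necessity of $\frac{1}{r} + \frac{2}{pd} \leq 1$, I take $f = 1_{\{|n| = \Lambda\}}$ and $g = 1_{\{0\}}$. The identity $\mathscr{A}_\Lambda f(0) = 1$ gives $\left\langle \sup_{\Lambda \leq \lambda < 2\Lambda} \left|\mathscr{A}_\lambda\right| f, g \right\rangle \geq 1$, while $\|f\|_p \simeq \Lambda^{(d-2)/p}$ and $\|g\|_r = 1$. The sparse-form bound then produces
\[ 1 \leq A \Lambda^{d(1/r' - 1/p) + (d-2)/p} = A \Lambda^{d - d/r - 2/p}, \]
whence $\frac{1}{r} + \frac{2}{pd} \leq 1$.

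The main subtlety is the sparse-form upper bound $\sup_{\mathcal{S}} \Lambda_{\mathcal{S},p,r}(f,g) \leq A \|f\|_p \|g\|_r$ for each choice of test function; this reduces to a direct scale-by-scale summation that converges cleanly within the duality region $\frac{1}{p}+\frac{1}{r} > 1$, and a continuity argument in $(p,r)$ extends the two conclusions (which are closed conditions) to the limiting case $\frac{1}{p} + \frac{1}{r} = 1$. No other obstacle beyond routine bookkeeping arises.
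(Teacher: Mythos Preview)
Your argument is correct and uses the same test functions and lower bounds as the paper; the only difference is that you bound the sparse form above by $A\|f\|_{\ell^p}\|g\|_{\ell^r}$ (valid here because one of the two functions is a point mass and the resulting geometric sum over scales converges), whereas the paper records the slightly sharper values $A\Lambda^{d(1-1/p)}$ and $A\Lambda^{-2/p}\Lambda^{d(1-1/r)}$ for the two test pairs. Your closing ``continuity argument in $(p,r)$'' is not legitimate, since the hypothesis is for a single fixed pair $(p,r)$ and cannot be perturbed; but it is also unnecessary: when $\frac{1}{p}+\frac{1}{r}\le 1$ the supremum of $\Lambda_{\mathcal{S},p,r}$ over sparse collections is already infinite for your test pairs (take nested dyadic cubes containing the origin at every scale), so the sparse-norm hypothesis is vacuous there and the proposition only has content in the open region $\frac{1}{p}+\frac{1}{r}>1$, where your geometric-series computation goes through without modification.
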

\begin{proof}
Fix $\Lambda \in 2^{\mathbb{N}}$ and set $f(n) = 1_{\{0\}}$ and $g_\Lambda(n)=1_{\Lambda \leq |n| <2 \Lambda} (n)$. Then observe
\begin{align*}
\left \langle  \sup_{\Lambda \leq \lambda <2 \Lambda}\mathscr{A}_{\lambda} f_, g_\Lambda \right \rangle \geq \frac{\Lambda^2}{A} . 
\end{align*}
However, $\sup_{\mathcal{S}} \Lambda_{\mathcal{S}, p,r}(f_\Lambda, g) \leq  A \Lambda^{d(1-1/p)}$. Indeed, for any sparse collection $\mathcal{S}$, 
\begin{align*}
\sum_{Q \in \mathcal{S}} \langle f \rangle_{Q,p} \langle g_\Lambda \rangle_{Q,r} |Q| =\sum_{\substack{ Q \in \mathcal{S}: \ell(Q) \geq \frac{\Lambda}{\sqrt{d}} \\  Q \ni \{0\}}} \langle f \rangle_{Q,p} \langle g_\Lambda \rangle_{Q,r} |Q| \leq A\Lambda^{d(1-1/p)}. 
\end{align*}
The claim that $\frac{1}{p} +\frac{2}{d} \leq  1$ then follows by taking $\Lambda \in 2^{\mathbb{N}} $ arbitrarily large. Moreover, for a given $\Lambda \in 2^{\mathbb{N}}$, set $f_\Lambda (n)=1_{|n|=\Lambda}(n)$ and $g(n)=1_{\{0\}}(n)$. Then   
\begin{align*}
\left \langle  \sup_{\Lambda \leq \lambda <2 \Lambda}\mathscr{A}_{ \lambda } f_\Lambda, g  \right \rangle \geq \frac{1}{A}. 
\end{align*}
However, $\sup_{\mathcal{S}} \Lambda_{\mathcal{S}, p,r}(f_\Lambda, g) \leq A \Lambda^{-2/p}\Lambda^{d(1-1/r)}$. The necessity of $\frac{1}{r}+ \frac{2}{pd} \leq 1$ then follows from taking $\Lambda \in 2^{\mathbb{N}}$ arbitrarily large. 
\end{proof}
Theorem \ref{Thm:2} follows from Theorem \ref{Thm:Sp-Corn} and Proposition \ref{P:Sp-N}.

\bibliography{sparse_discrete-spherical}	
\bibliographystyle{amsplain}	
\end{document}